\def\blfootnote{\xdef\@thefnmark{}\@footnotetext}
\newtheorem{thm}{Theorem}[section]
\newtheorem{mainthm}{Theorem}
\newtheorem{cor}[thm]{Corollary}
\newtheorem{lem}[thm]{Lemma}
\newtheorem{prop}[thm]{Proposition}
\theoremstyle{definition}
\newtheorem{defn}[thm]{Definition}
\theoremstyle{remark}
\newtheorem{rem}[thm]{Remark}
\newtheorem{ex}[thm]{Example}
\newtheorem{conv}[thm]{Convention}
\newtheorem{nota}[thm]{Notation}
\renewcommand{\phi}{\varphi}
\renewcommand{\epsilon}{\varepsilon}
\newcommand{\N}{\mathbb N}
\newcommand{\R}{\mathbb R}
\newcommand{\Z}{\mathbb Z}
\renewcommand{\Pr}{\mathbb P}
\newcommand{\Aut}{\operatorname{Aut}}
\newcommand{\Out}{\operatorname{Out}}
\newcommand{\Stab}{\operatorname{Stab}}
\newcommand{\Fix}{\operatorname{Fix}}
\newcommand{\rank}{\operatorname{rank}}
\newcommand{\supp}{\operatorname{supp}}
\newcommand{\Curr}{\operatorname{Curr}}
\newcommand{\PCN}{\Pr\Curr(F_N)}
\newcommand{\CN}{\Curr(F_N)}
\newcommand{\cvn}{\mathrm{cv}_N}
\newcommand{\cvnbar}{\overline{\mathrm{cv}}_N}
\newcommand{\sm}{\langle \supp(\mu)\rangle_+}
\newcommand{\dd}{\partial^2 F_N}
\begin{document}
\raggedbottom

\title{Generic-case complexity of Whitehead's algorithm, revisited}

\author{Ilya Kapovich}

\address{Department of Mathematics and Statistics, Hunter College of CUNY\newline
  \indent 695 Park Ave, New York, NY 10065
  \newline \indent  {\url{http://math.hunter.cuny.edu/ilyakapo/}}, }
  \email{\tt ik535@hunter.cuny.edu}
\keywords{free group, Whitehead's algorithm, random walks}

\thanks{The author was supported by the individual NSF
  grant DMS-1905641}

\subjclass[2020]{Primary 20F65, Secondary 20F10, 20F67, 37D99, 60B15, 68Q87, 68W40}

\date{}
\hypersetup{
  pdftitle={Generic-case complexity of Whitehead's algorithm, revisited},
  pdfauthor={Ilya Kapovich},
  pdfsubject={Whitehead's algorithm, geodesic currents, and random walks},
  pdfkeywords={free groups, Whitehead's algorithm, random walks, geodesic currents}
}

\begin{abstract}
The results of \cite{KSS06} show that, with respect to the simple non-backtracking random walk on the free group $F_N=F(a_1,\dots,a_N)$, Whitehead's algorithm has strongly linear-time generic-case complexity and that generic elements of $F_N$ are strictly minimal in their $\Out(F_N)$-orbits.
Here we generalize these results, with appropriate modifications, to a much wider class of random processes generating elements of $F_N$. We introduce the notion of an \emph{$M$-minimal} conjugacy class $[w]$ in $F_N$, where $M\ge 1$. For $[w]$ to be $M$-minimal means that any chain of $k$ Whitehead moves, starting with $[w]$ and producing a sequence of distinct conjugacy classes with monotonically non-increasing lengths, satisfies $k\le M$. We prove that if a conjugacy class $[w]$ is sufficiently close to a filling projective geodesic current $[\nu]\in\PCN$, then, after applying one of finitely many reducing automorphisms depending only on $\nu$, one obtains an $M$-minimal conjugacy class, for a uniform constant $M=M(\nu)$. Consequently, the corresponding finite speed-up of Whitehead's algorithm works in quadratic time when the other input is arbitrary, and in linear time when both inputs are projectively close to $[\nu]$. We then prove that a wide class of random processes produces random conjugacy classes $[w_n]$ that converge projectively to a filling current. For such $[w_n]$, if $u\in F_N$ is arbitrary, the finite speed-up of Whitehead's algorithm on $([w_n],[u])$ works in time $O(\max\{|w_n|,|u|^2\})$; if $[w_m']$ is also generated randomly, it works in time $O(\max\{|w_n|,|w_m'|\})$.
\end{abstract}

\maketitle

\tableofcontents



\section{Introduction}

\subsection{Whitehead's algorithm and generic complexity}

Let $F_N=F(A)$ be a free group of finite rank $N\ge2$, with a fixed free basis $A=\{a_1,\dots,a_N\}$. The \emph{automorphism problem} for $F_N$ asks, given two freely reduced words $w,w'\in F_N$, whether $w'=\Phi(w)$ for some $\Phi\in\Aut(F_N)$. For $g\in F_N$, let $|g|_A$ and $\|g\|_A$ denote its freely reduced and cyclically reduced lengths, respectively, and let $[g]$ denote its conjugacy class. Put
\[
\mathcal C_N:=\{[g]\mid g\in F_N\}.
\]
Then
\[
\Aut(F_N)w=\Aut(F_N)w'
\quad\Longleftrightarrow\quad
\Out(F_N)[w]=\Out(F_N)[w'].
\]
Thus we formulate the automorphism problem in terms of the action of $\Out(F_N)$ on $\mathcal C_N$.

Whitehead~\cite{W36} solved this problem using a finite generating set of \emph{Whitehead automorphisms} for $\Aut(F_N)$. Their images in $\Out(F_N)$ are called \emph{Whitehead moves}; the set of nontrivial such outer automorphisms is denoted by $\mathcal W_N$, see Definition~\ref{defn:moves}. A conjugacy class $[u]\in\mathcal C_N$ is \emph{$\Out(F_N)$-minimal} if
\[
\|u\|_A\le \|\phi(u)\|_A
\qquad\text{for every }\phi\in\Out(F_N),
\]
and it is \emph{Whitehead-minimal} if the same inequality holds for every $\tau\in\mathcal W_N$. Whitehead's "peak reduction" theorem  implies that these two conditions are equivalent. Peak reduction also divides Whitehead's algorithm into two stages. First, one repeatedly applies a Whitehead move that strictly decreases cyclically reduced length until an $\Out(F_N)$-minimal conjugacy class is reached. This minimization stage has a general quadratic-time bound. One then explores the component of minimal representatives connected by length-preserving Whitehead moves. This stabilization component may have exponentially many vertices, and the best general upper bound for the complete algorithm is exponential. The precise statements are recalled in Section~\ref{s:wh}. The worst-case complexity remains open in rank $N\ge3$; see, among others, \cite{Ci07,Kh04,Lee1,Lee,MH06,MS03,RVW}. In rank $N=2$ a quadratic time bound is known~\cite{MS03,Kh04}.

Kapovich--Schupp--Shpilrain~\cite{KSS06} initiated the generic-case complexity study of Whitehead's algorithm. They called $[w]\in\mathcal C_N$ \emph{strictly minimal} if
\[
\|w\|_A<\|\tau(w)\|_A
\]
for every non-inner Whitehead move $\tau$ of the second kind. They proved~\cite{KSS06} that a uniformly random freely reduced or cyclically reduced word in $F_N$ is strictly minimal with exponentially high probability. Consequently, both stages of Whitehead's algorithm are generically linear when both inputs are random, while for one random input $w$ and one arbitrary input $u$ the running time is
\[
O\bigl(\max\{|w|_A,|u|_A^2\}\bigr).
\]

Strict minimality is too rigid for many other natural random processes. For example, let $F_2=F(a,b)$ and let $w_n$ be a positive Bernoulli word of length $n$ with probabilities $p(a)=1/10$ and $p(b)=9/10$. For the Whitehead automorphism
\[
\tau(a)=ab^{-1},\qquad \tau(b)=b,
\]
let $A_n$ be the number of occurrences of $a$ and let $C_n$ be the number of cyclic occurrences of $ab$ in $w_n$. Then
\[
\|\tau(w_n)\|_A=n+A_n-2C_n,
\qquad
\frac{\|\tau(w_n)\|_A}{\|w_n\|_A}
\longrightarrow 1+\frac1{10}-2\frac9{100}=\frac{23}{25}<1
\]
almost surely. Thus $[w_n]$ is not even Whitehead-minimal with probability tending to $1$. This example motivates a weaker notion that still controls the complexity of Whitehead's algorithm.

\subsection{\texorpdfstring{$M$-minimality}{M-minimality}}

The following is a key technical notion introduced in this paper. It generalizes strict minimality by replacing the requirement that every relevant Whitehead move increase length with a uniform bound on all simple non-increasing chains of Whitehead moves.

Let $M\ge1$. A conjugacy class $[u]\in\mathcal C_N$ is called \emph{$M$-minimal} (see Definition~\ref{d:M} below) if the following condition holds. Whenever $\tau_1,\dots,\tau_k\in\mathcal W_N$ and
\[
[u_i]:=\tau_i\cdots\tau_1([u])
\qquad(1\le i\le k)
\]
satisfy
\[
\|u\|_A\ge \|u_1\|_A\ge\cdots\ge\|u_k\|_A
\]
and the $k+1$ conjugacy classes
\[
[u],[u_1],\dots,[u_k]
\]
are pairwise distinct, then $k\le M$.  In particular, every strictly length-decreasing Whitehead-minimization chain starting at $[u]$ has length at most $M$, and the set of $\Out(F_N)$-minimal representatives in the orbit of $[u]$ has uniformly bounded cardinality in terms of $N, M$.

We prove:
\begin{mainthm}\label{thm:introA}
Fix $N\ge2$ and $M\ge1$. There exist constants $C=C(N,M)\ge1$ and $K=K(N,M)\ge1$ such that the following hold.
\begin{enumerate}
\item Given $1\ne u\in F_N$, whether $[u]$ is $M$-minimal can be decided in time at most $K|u|_A$.
\item If $1\ne u\in F_N$ and $[u]$ is $M$-minimal, then Whitehead minimization on $u$ terminates in time at most $K|u|_A$, and
\[
\#\mathcal M([u])\le C,
\qquad
\rank\,\Stab_{\Out(F_N)}([u])\le C.
\]
\item If $1\ne u,v\in F_N$ and $[u]$ and $[v]$ are both $M$-minimal, then Whitehead's algorithm decides whether
\[
\Out(F_N)[u]=\Out(F_N)[v]
\]
in time at most
\[
K\max\{|u|_A,|v|_A\}.
\]
\item If $1\ne u\in F_N$, $[u]$ is $M$-minimal, and $1\ne v\in F_N$ is arbitrary, then the same problem can be decided in time at most
\[
K\max\{|u|_A,|v|_A^2\}.
\]
\end{enumerate}
\end{mainthm}

The cases involving the trivial conjugacy class are immediate: $[1]$ is $M$-minimal, $\mathcal M([1])=\{[1]\}$, and $\Stab_{\Out(F_N)}([1])=\Out(F_N)$, which is generated by the finite set of Whitehead moves. Inputs equal to the identity can therefore be recognized and handled separately in constant time.

Theorem~\ref{thm:introA} combines Lemma~\ref{lem:M}, Theorem~\ref{t:WHM}, and Propositions~\ref{prop:st} and~\ref{p:algM}. Every strictly minimal conjugacy class is $M$-minimal for $M=2^N N!$, so the new notion contains the principal generic class from \cite{KSS06}.

The proofs use two quantitative refinements, called $(M,\lambda,\epsilon)$-minimality and $(M,\lambda,\epsilon,\mathcal W_N)$-minimality. At a schematic level, they require a finite set $S\subseteq\Out(F_N)[u]$ with $\#S\le M$ such that
\[
1-\epsilon\le \frac{\|u'\|_A}{\|u\|_A}\le1+\epsilon
\qquad([u],[u']\in S),
\]
while leaving $S$ produces a definite multiplicative length increase:
\[
\frac{\|\phi(u)\|_A}{\|u\|_A}\ge\lambda
\qquad\text{whenever }[u]\in S\text{ and }\phi([u])\notin S.
\]
For $(M,\lambda,\epsilon,\mathcal W_N)$-minimality, the latter condition is required only for $\phi\in\mathcal W_N$. The precise definitions, including the orbit conditions on $S$, and their relationship to $M$-minimality are given in Section~\ref{s:MLE}.

\subsection{Filling currents and preferred representatives}

\emph{Geodesic currents} on free groups provide a crucial technical tool in this paper (Sections 4 and 5 recall the required background).  Put
\[
\partial^2F_N:=(\partial F_N\times\partial F_N)\setminus\{(x,x)\mid x\in\partial F_N\},
\]
and let $\varpi:\partial^2F_N\to\partial^2F_N$ be the flip map $\varpi(x,y)=(y,x)$. A \emph{geodesic current} on $F_N$ is a positive, locally finite Borel measure $\nu$ on $\partial^2F_N$ that is invariant under both the diagonal action of $F_N$ and the flip $\varpi$. The space of geodesic currents is denoted by $\CN$, and its projectivization is
\[
\PCN=(\CN\setminus\{0\})/\R_{>0}.
\]
For $1\ne w\in F_N$, the counting current $\eta_w\in\CN$ depends only on the conjugacy class $[w]$.

Kapovich--Lustig~\cite{KL09,KL10} constructed a continuous geometric intersection form
\[
\langle\,\cdot\,,\,\cdot\,\rangle:\cvnbar\times\CN\longrightarrow\R_{\ge0},
\]
where $\cvnbar$ is the closure of unprojectivized Culler--Vogtmann Outer space. It satisfies
\[
\langle T,\eta_w\rangle=\|w\|_T
\qquad(T\in\cvnbar,\,1\ne w\in F_N).
\]
If $T_A$ is the Cayley tree of $F_N$ corresponding to the basis $A$, put
\[
\|\nu\|_A:=\langle T_A,\nu\rangle;
\]
then $\|\eta_w\|_A=\|w\|_A$. A nonzero current $\nu\in\CN$ is called \emph{filling} if
\[
\langle T,\nu\rangle>0
\qquad\text{for every }T\in\cvnbar.
\]

Our main general result relating geodesic currents and Whitehead's algorithm is the following:
\begin{mainthm}\label{thm:introB}
Let $F_N=F(A)$ and let $0\ne\nu\in\CN$ be a filling current. There exist an integer $M\ge1$, a finite set
\[
\mathfrak W\subseteq\Out(F_N),\qquad \#\mathfrak W=M,
\]
and a number $\lambda$ with $1<\lambda<2$ such that the following holds. For every $0<\epsilon<\lambda-1$, there is a neighborhood $U$ of $[\nu]$ in $\PCN$ such that, whenever $1\ne w\in F_N$ and $[\eta_w]\in U$, the set
\[
S:=\mathfrak W[w]
\]
satisfies:
\begin{enumerate}
\item $\#S\le M$, and every element of $S$ is $M$-minimal;
\item for every $[u],[u']\in S$,
\[
1-\epsilon\le \frac{\|u'\|_A}{\|u\|_A}\le1+\epsilon;
\]
\item if $[u]\in S$ and $\phi([u])\notin S$, where $\phi\in\Out(F_N)$, then
\[
\frac{\|\phi(u)\|_A}{\|u\|_A}\ge\lambda;
\]
\item $\mathcal M([w])\subseteq S$, and $\Stab_{\Out(F_N)}([w])$ is finite.
\end{enumerate}
\end{mainthm}

Theorem~\ref{thm:introB} is proved in Theorem~\ref{t:key1} and Corollary~\ref{cor:key1}. The number $\lambda$ may be chosen below $2$, since the argument only requires it to lie strictly below the positive multiplicative gap. Thus $0<\epsilon<\lambda-1$ automatically implies $\epsilon<1$, as required in Corollary~\ref{cor:key1}. The finite set $\mathfrak W$ consists of the automorphisms minimizing $\|\phi\nu\|_A$ over $\phi\in\Out(F_N)$. Properness and discreteness of the $\Out(F_N)$-orbit of a filling current yield the gap, and continuity transfers it to counting currents whose projective classes are close to $[\nu]$.

\subsection{Adapted random processes and generic complexity}

To obtain generic-case complexity results we introduce the following notion relating random processes generating word-inputs in $F_N$ and geodesic currents.

Let $(\Omega,\Pr)$ be the probability space underlying an $F_N$-valued random process $\mathcal W=W_1,W_2,\dots$. For $0\ne\nu\in\CN$, the process $\mathcal W$ is called \emph{$\nu$-adapted} if, for $\Pr$-almost every $\omega\in\Omega$, one has $W_n(\omega)\ne1$ for all sufficiently large $n$ and
\[
[\eta_{W_n(\omega)}]\longrightarrow[\nu]
\qquad\text{in }\PCN.
\]
The process $\mathcal W$ is called \emph{tame} if there exists $C>0$ such that
\[
|W_n(\omega)|_A\le Cn
\qquad\text{for every }n\ge1\text{ and every }\omega\in\Omega.
\]
These notions are formally introduced in Definition~\ref{d:adapt}. For a finite set $\mathfrak W\subseteq\Out(F_N)$, the $\mathfrak W$-speed-up of Whitehead minimization applies the minimization procedure in parallel to $[W_n]$ and to the finitely many conjugacy classes $\phi([W_n])$, $\phi\in\mathfrak W$; the precise algorithm is defined in Section~\ref{s:wh}.

Our main general result regarding random processes and the generic-case complexity of Whitehead's algorithm is:
\begin{mainthm}\label{thm:introC}
Let $\mathcal W=W_1,W_2,\dots$ be adapted to a filling current $\nu$. Then there exist $M\ge1$, a number $\lambda$ with $1<\lambda<2$, and a finite set $\mathfrak W\subseteq\Out(F_N)$, with $\#\mathfrak W\le M$, such that for every $0<\epsilon<\lambda-1$ the following hold.
\begin{enumerate}
\item For $\Pr$-almost every trajectory, all sufficiently large $n$ have the property that $S_n:=\mathfrak W[W_n]$ satisfies conclusions (1)--(4) of Theorem~\ref{thm:introB}.
\item The probability that $S_n$ satisfies those conclusions tends to $1$ as $n\to\infty$.
\end{enumerate}
If, in addition, $\mathcal W$ is tame, there exists $K\ge1$ such that:
\begin{enumerate}
\item[(3)] for $\Pr$-almost every trajectory and all sufficiently large $n$, the $\mathfrak W$-speed-up of Whitehead minimization on $W_n$ terminates in time at most $Kn$;
\item[(4)] for $\Pr$-almost every trajectory, all sufficiently large $n$, and every $u\in F_N$, the $\mathfrak W$-speed-up decides whether
\[
\Out(F_N)[W_n]=\Out(F_N)[u]
\]
in time at most
\[
K\max\{n,|u|_A^2\};
\]
\item[(5)] for $\Pr\times\Pr$-almost every pair of independent trajectories and all sufficiently large $n,m$, it decides whether
\[
\Out(F_N)[W_n]=\Out(F_N)[W_m']
\]
in time at most
\[
K\max\{n,m\}.
\]

\item[(6)] The probability that the conclusion of (3) holds at time $n$ tends to $1$ as $n\to\infty$, and the same is true for the uniform-in-$u$ conclusion of (4).
\item[(7)] If $\mathcal W'=W_1',W_2',\dots$ is an independent copy and $n_i,m_i\ge1$ satisfy $\min\{n_i,m_i\}\to\infty$, then the probability that the conclusion of (5) holds for $(W_{n_i},W_{m_i}')$ tends to $1$.
\end{enumerate}
\end{mainthm}

The qualitative and quantitative parts of Theorem~\ref{thm:introC} are Theorems~\ref{t:A} and~\ref{t:A'}, respectively. We state the results directly in terms of random processes and probabilities, rather than choosing a single formalism for generic-case complexity. In the broader sense developed after \cite{KMSS03}, these are generic-case complexity statements.

\subsection{Group and graph random walks}

Theorem~\ref{thm:introC} becomes useful once one has natural random processes adapted to filling currents. The paper treats two broad families of such processes.

\begin{mainthm}\label{thm:introD}
The following hold.
\begin{enumerate}
\item Let $\mu:F_N\to[0,1]$ be a finitely supported probability measure such that
\[
\langle\supp(\mu)\rangle_+=F_N.
\]
Then the associated group random walk is tame and adapted to a filling current.
\item Let $\Gamma$ be a marked finite connected graph with all vertices of degree at least $3$ and $\pi_1(\Gamma)\cong F_N$, let $\mathcal X$ be an irreducible $\Gamma$-based finite-state Markov chain, let $\mu$ be any initial distribution, and let $\mathcal B$ be a closing-path system. Then both the $\mathcal B$-closing process and the modified $\mathcal B$-closing process are tame and adapted to the characteristic current $\nu_{\mathcal X}$.
\item If $\nu_{\mathcal X}$ is filling, in particular under any of the explicit hypotheses of Proposition~\ref{p:XF}, then the conclusions of Theorem~\ref{thm:introC} apply to both graph-based processes.
\end{enumerate}
\end{mainthm}

Part (1) of Theorem~\ref{thm:introD} is Theorem~\ref{t:rwa}; parts (2) and (3) combine Theorems~\ref{t:cl} and~\ref{t:cla} with Proposition~\ref{p:XF}. For the group-random-walk result, Gekhtman's theorem~\cite[Theorem~1.5]{Ge17} gives convergence of normalized \emph{oriented} closed-geodesic measures to a harmonic invariant measure in the boundary measure class $\check\lambda\times\lambda$, where $\lambda$ is the hitting measure for $\mu$ and $\check\lambda$ is the hitting measure for the reflected probability measure
\[
\check\mu(g):=\mu(g^{-1}).
\]
Symmetrization produces a usual flip-invariant current in the measure class
\[
(\check\lambda\times\lambda)+(\lambda\times\check\lambda).
\]
The semigroup hypothesis implies
\[
\supp(\lambda)=\supp(\check\lambda)=\partial F_N,
\]
so the symmetrized current has full support on $\partial^2F_N$ and is filling. For graph-based walks, the characteristic current is constructed directly from stationary block frequencies. Its filling property follows either from full support or from the $\R$-tree criterion of Proposition~\ref{p:ai}.

The standard simple non-backtracking walk on the rose is a special case: its characteristic current is the uniform current $\nu_A$, which has full support on $\partial^2F_N$. Thus the framework recovers the earlier generic model of \cite{KSS06}, while also allowing highly nonuniform and graph-directed random processes such as the biased process discussed above.

\subsection{Organization and further remarks}

Section~\ref{s:wh} recalls Whitehead's algorithm and peak reduction. Section~\ref{s:MLE} develops $M$-minimality, its quantitative refinements, algorithmic detection, and stabilizer bounds. Sections 4--6 treat currents, filling, and the neighborhood theorem underlying Theorem~\ref{thm:introB}. Section 7 proves the group-random-walk result, and Sections 8--9 develop the finite-state Markov-chain and graph-walk constructions.

The earlier results of \cite{KSS06} are strongly generic, with exponentially fast convergence of the relevant probabilities. The qualitative group-random-walk application here uses Gekhtman's equidistribution theorem followed by symmetrization, and the cited theorem does not provide a quantitative rate. A quantitative version would yield corresponding estimates after refining the definition of adaptation. In the graph-based setting, the frequency and cancellation arguments provide exponential or stretched-exponential estimates for their constituent events.

We are most grateful to Vadim Kaimanovich and Joseph Maher for many helpful discussions about random walks, for help with the references, and for clarifying several random-walk arguments. In particular, the proof of Proposition~\ref{p:supp} was explained to us by Kaimanovich. We are also grateful to the organizers of the March 2019 Dagstuhl conference ``Algorithmic Problems in Group Theory'' for providing impetus and motivation for completing this paper.


\section{Whitehead's algorithm}\label{s:wh}

Our main background reference for Whitehead's algorithm is Lyndon and Schupp, Chapter~I.4~\cite{LS}, and we refer the reader there for additional details.
Some other useful details and complexity results are available in \cite{KSS06,RVW}.
We recall the basic definitions and results here.

In this section we fix a free group  $F_N=F(A)$ of rank $N\ge 2$, with a fixed free basis $A=\{a_1,\dots, a_N\}$. Put $\Sigma_A=A\sqcup A^{-1}$.
We will also denote by $\mathcal C_N$ the set of all $F_N$-conjugacy classes $[g]$ where $g\in F_N$.

\begin{defn}[Whitehead automorphisms]\label{defn:moves}
  A \emph{Whitehead automorphism} of $F_N$ with respect to $A$ is an automorphism $\tau\in \Aut(F_N)$ of
  $F_N$ of one of the following two types:

  (1) There is a permutation $t$ of $\Sigma_A$ such that
  $\tau|_{\Sigma_A}=t$. In this case $\tau$ is called a \emph{relabeling
    automorphism} or a \emph{Whitehead automorphism of the first
    kind}.

  (2) There is an element $a\in \Sigma_A$, the \emph{multiplier}, such
  that for any $x\in \Sigma_A$
\[
\tau(x)\in \{x, xa, a^{-1}x, a^{-1}xa\}.
\]

In this case we say that $\tau$ is a \emph{Whitehead automorphism of
  the second kind}. (Note that since $\tau$ is an automorphism of $F_N$,
we always have $\tau(a)=a$ in this case).

We also refer to the images of Whitehead automorphisms in $\Out(F_N)$ as \emph{Whitehead moves} and sometimes again as \emph{Whitehead automorphisms}. We denote by $\mathcal W_N$ the set of all Whitehead moves $\tau\in\Out(F_N)$ such that $\tau\ne 1$ in $\Out(F_N)$.
\end{defn}

Note that for any $a\in \Sigma_A$ the inner automorphism $\operatorname{ad}(a)\in\Aut(F_N)$ is a
Whitehead automorphism of the second kind. Note also that if $\tau\in \mathcal W_N$ then $\tau^{-1}\in \mathcal W_N$.

To simplify the exposition, we formulate all the definitions and results related to Whitehead's algorithm in terms of conjugacy classes of elements of $F_N$.
In this context we usually think of an input $[w]\in \mathcal C_N$ as given by a cyclically reduced word $w\in F(A)$.  Since for $w\in F_N$ we have $||w||_A\le |w|_A$, and since it takes linear time in $|w|_A$ to find a cyclically reduced form of $w\in F(A)$ (see \cite{KSS06} for additional discussion on this topic), all our complexity estimates hold in terms of $|w|_A$.

\begin{defn}[Minimal and Whitehead-minimal elements]
  A conjugacy class $[w]\in\mathcal C_N$ is \emph{$\Out(F_N)$-minimal} with respect to $A$ if for every $\phi\in\Out(F_N)$ we have $||w||_A\le ||\phi(w)||_A$.

  A conjugacy class $[w]\in\mathcal C_N$ is \emph{Whitehead-minimal} with respect to $A$ if for every Whitehead move $\tau\in\mathcal W_N$ we have $||w||_A\le ||\tau(w)||_A$.

  For $[w]\in \mathcal C_N$, denote $\mathcal M([w])=\{[u]\in \Out(F_N)[w]| [u] \text{ is $\Out(F_N)$-minimal}\}$. 
\end{defn}
Note that, by definition, an $\Out(F_N)$-minimal $[w]$ is necessarily Whitehead-minimal.

\begin{defn}[Automorphism graph]\label{d:autgraph}
The \emph{automorphism graph} of $F_N$ is the labelled multigraph $\mathcal T$ defined as follows. Its vertex set is
\[
V\mathcal T=\mathcal C_N.
\]
For every $[w]\in\mathcal C_N$ and every $\tau\in\mathcal W_N$ satisfying
$\|\tau(w)\|_A=\|w\|_A$, introduce an oriented edge
\[
e([w],\tau):[w]\longrightarrow \tau([w])
\]
labelled by $\tau$, and equip the edge set with the involution
\[
e([w],\tau)^{-1}=e(\tau([w]),\tau^{-1}).
\]
The resulting underlying multigraph allows parallel edges and loops. If the displayed involution fixes the move data, the corresponding loop still has its two formal orientations, both carrying the same involutive label. Thus distinct Whitehead moves inducing the same map between conjugacy classes remain distinct labelled edges.

For $n\ge 0$, let $\mathcal T_n$ be the full subgraph spanned by the vertices $[w]$ with $\|w\|_A=n$. For a vertex $[w]\in V\mathcal T_n$, denote by $\mathcal T_n[w]$ the connected component of $\mathcal T_n$ containing $[w]$.
\end{defn}

We first state the following simplified version of Whitehead's ``peak reduction'' lemma (see \cite[Proposition~1.2]{KSS06}):

\begin{prop}\label{p:wpr}
Let $N\ge 2$ be an integer. Then the following hold:

\begin{enumerate}
\item An element $[w]\in \mathcal C_N$ is $\Out(F_N)$-minimal if and only if $[w]$ is Whitehead-minimal. (Thus if $[w]$ is not $\Out(F_N)$-minimal then there exists $\tau\in\mathcal W_N$ such that $||\tau(w)||_A<||w||_A$).
\item Suppose that $[w]\ne [w']$ are both $\Out(F_N)$-minimal. Then $\Out(F_N)[w]=\Out(F_N)[w']$ if and only if $||w||_A=||w'||_A=n\ge 0$, and there exists a finite sequence $\tau_1,\dots\tau_k\in\mathcal W_N$ such that $\tau_k\dots \tau_1[w]=[w']$ and that for $i=1,\dots, k$ we have
\[
||\tau_i\dots \tau_1(w)||_A=n.
\]
\end{enumerate}
\end{prop}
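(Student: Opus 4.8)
The plan is to reduce both parts to Whitehead's classical \emph{peak reduction} theorem; the present formulation is essentially \cite[Proposition~1.2]{KSS06}, and a textbook treatment of the underlying machinery is in Lyndon--Schupp~\cite[Ch.~I.4]{LS}, going back to Whitehead~\cite{W36}. Recall peak reduction in the following form, which I would take as a black box: given a chain $[v_0]\to[v_1]\to\dots\to[v_k]$ with $[v_i]=\tau_i([v_{i-1}])$, $\tau_i\in\mathcal W_N$, there is another such chain with the same endpoints all of whose terms $[v_i']$ satisfy $||v_i'||_A\le\max\{||v_0||_A,||v_k||_A\}$; moreover, if an interior term is a \emph{peak}, i.e. $||v_{i-1}||_A\le||v_i||_A\ge||v_{i+1}||_A$ with at least one inequality strict, then the segment $[v_{i-1}]\to[v_i]\to[v_{i+1}]$ can be replaced by a chain (with the same endpoints, introducing no higher peaks) whose interior terms all have length $<||v_i||_A$, possibly by a single move $[v_{i-1}]\to[v_{i+1}]$ or, when $[v_{i-1}]=[v_{i+1}]$, by nothing.

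For part (1): the implication ``$\Out(F_N)$-minimal $\Rightarrow$ Whitehead-minimal'' is immediate since $\mathcal W_N\subseteq\Out(F_N)$. For the converse, suppose $[w]$ is not $\Out(F_N)$-minimal and fix $\phi\in\Out(F_N)$ with $||\phi(w)||_A$ equal to the minimal length $m$ occurring in the orbit $\Out(F_N)[w]$, so $m<||w||_A$. Write $\phi=\tau_k\cdots\tau_1$ with $\tau_i\in\mathcal W_N$ and peak-reduce the chain $[w]=[v_0]\to\dots\to[v_k]=[\phi(w)]$, so we may assume every $||v_i||_A\le ||w||_A$. Let $i\ge1$ be least with $||v_i||_A<||w||_A$. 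If $i=1$ we are done with $\tau=\tau_1$. If $i\ge2$ then $||v_{i-2}||_A=||v_{i-1}||_A=||w||_A>||v_i||_A$, so $[v_{i-1}]$ is a peak; applying the peak-reduction clause to $[v_{i-2}]\to[v_{i-1}]\to[v_i]$ yields a chain with the same endpoints in which the first strict decrease occurs one step earlier while the global bound $\le||w||_A$ is preserved. After finitely many such moves the first strict decrease occurs at step $1$, giving a single $\tau\in\mathcal W_N$ with $||\tau(w)||_A<||w||_A$.

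For part (2): the ``if'' direction is trivial, since $\tau_k\cdots\tau_1([w])=[w']$ already shows $\Out(F_N)[w]=\Out(F_N)[w']$. For ``only if'', assume $[w]\ne[w']$ are both $\Out(F_N)$-minimal with $\Out(F_N)[w]=\Out(F_N)[w']$; minimality forces $||w||_A=||w'||_A=n$, the common minimal length of the orbit. Pick $\phi\in\Out(F_N)$ with $\phi([w])=[w']$, write $\phi=\tau_k\cdots\tau_1$ with $\tau_i\in\mathcal W_N$, and peak-reduce the chain $[w]=[v_0]\to\dots\to[v_k]=[w']$; then $||v_i||_A\le\max\{n,n\}=n$ for all $i$. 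But each $[v_i]$ lies in $\Out(F_N)[w]$, whose minimal length is $n$, so in fact $||v_i||_A=n$ for every $i$, i.e. the chain is length-stable at level $n$, as required.

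The main obstacle is the peak reduction theorem itself, whose proof is a delicate analysis of cancellation in $w$ versus $\tau(w)$ via Whitehead graphs; since it is entirely classical I would cite \cite{W36,LS,KSS06} rather than reprove it. The only genuinely new step is the ``push the first decrease to the left'' induction in part (1), and it is routine once the peak-elimination clause (including the level-peak case) is available; note that the first-decrease index strictly decreases at each stage and is bounded below by $1$, so the induction terminates.
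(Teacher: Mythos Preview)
The paper does not actually prove this proposition: it is stated without proof, with the remark ``see \cite[Proposition~1.2]{KSS06}'' and the introductory line calling it a ``simplified version of Whitehead's `peak reduction' lemma.'' So there is nothing in the paper to compare your argument against beyond that citation.

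Your derivation from the peak reduction black box is correct. The argument for part~(2) is exactly the standard one. For part~(1), your ``push the first decrease to the left'' induction works, but it is slightly more elaborate than necessary: once you invoke peak reduction in the strong form you quote (all intermediate lengths $\le\max\{||v_0||_A,||v_k||_A\}=||w||_A$, and every strict peak can be lowered), the usual formulation already yields a chain whose length profile is first weakly decreasing then weakly increasing; since the terminal length is $m<||w||_A$ and the initial length is $||w||_A$, the first step must be a strict decrease. Your iterated level-peak elimination reproves exactly this. Either way, the termination argument is fine because the index of the first strict decrease (measured from $[w]$) drops by at least one at each stage while the prefix $[v_0],\dots,[v_{i-2}]$ is untouched.
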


Proposition~\ref{p:wpr} implies that if $[w]\in \mathcal C_N$ is $\Out(F_N)$-minimal with $||w||_A=n$ then $\mathcal M([w])=V\mathcal T_n[w]$.

We also record the following more general version of ``peak reduction'':

\begin{prop}\label{p:wprs}\cite[Proposition~4.17]{LS}
Let $[w],[w']\in \mathcal C_N$ and $\phi\in\Out(F_N)$ be such that $[w']=\phi([w])$ and $\|w'\|_A\le \|w\|_A$.
Then there exists a factorization $\phi=\tau_k\dots \tau_1$ in $\Out(F_N)$, where $\tau_i\in \mathcal W_N$ and where $\|\tau_i\dots\tau_1(w)\|_A\le \|w\|_A$ for $i=1,\dots,k$.
\end{prop}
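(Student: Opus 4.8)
Since the statement is Whitehead's peak-reduction lemma in its outer, single-conjugacy-class form, the plan is to reproduce Whitehead's classical argument (as in \cite[Ch.~I.4]{LS}); I sketch its shape here.

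\emph{Step 1: an arbitrary factorization and the induction scheme.} By Whitehead's theorem $\mathcal W_N$ generates $\Out(F_N)$, so I would first write $\phi=\sigma_m\cdots\sigma_1$ with each $\sigma_j\in\mathcal W_N$. Put $u_0=[w]$, $u_j=\sigma_j\cdots\sigma_1[w]$ and $n_j=||u_j||_A$, so $n_0=||w||_A$ and $n_m=||w'||_A\le n_0$. Let $H=\max_j n_j$. If $H=n_0$ we are done. Otherwise I would induct on the pair $(H,\,N_H)$, ordered lexicographically, where $N_H$ is the number of maximal runs of consecutive indices $j$ with $n_j=H$; the goal is to drive this pair down until $H=n_0$.

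\emph{Step 2: the peak-reduction step.} Since $n_0<H$ and $n_m\le n_0<H$, the sequence $(n_j)$ rises from $n_0$ to $H$ and later descends, so it exhibits a \emph{peak} at height $H$: a place where, after collapsing the plateau at the top, the length strictly increases to $H$ and afterwards strictly decreases from $H$. Concretely one isolates a conjugacy class $v=u_{j-1}$ and two moves $\sigma=\sigma_j$, $\rho=\sigma_{j+1}$ (or a short product of moves across the plateau) with $||\sigma v||_A\ge ||v||_A$, $||\sigma v||_A\ge ||\rho\sigma v||_A$ and at least one inequality strict. The \textbf{Peak Reduction Lemma} then furnishes $\mu_1,\dots,\mu_t\in\mathcal W_N$ with $\mu_t\cdots\mu_1=\rho\sigma$ in $\Out(F_N)$ and $||\mu_i\cdots\mu_1 v||_A<H$ for all $i$ (indeed $\le\max(||v||_A,||\rho\sigma v||_A)$ throughout). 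Splicing this detour into the factorization of $\phi$ replaces the excursion through the peak by one that stays strictly below $H$; since the detour never reaches height $H$ it creates no new length-$H$ vertices and leaves the endpoints $[w],[w']$ unchanged, so $(H,N_H)$ strictly decreases. Iterating, the induction terminates with a factorization in which every intermediate length is $\le n_0=||w||_A$, which is the claim.

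\emph{Step 3: the Peak Reduction Lemma --- the main obstacle.} This is the genuinely technical ingredient, and it is purely combinatorial: it is carried out via the \emph{Whitehead graph} $\mathrm{Wh}(v)$ on vertex set $\Sigma_A$ whose weighted edges record the two-letter subwords of the cyclic word $v$, together with the cut formula expressing, for a second-kind Whitehead move $\tau$ with multiplier $a$ and defining set $B$, the difference $||\tau v||_A-||v||_A$ as (weight of the $\mathrm{Wh}(v)$-cut determined by $B\setminus\{a,a^{-1}\}$) minus (weighted degree of $a$ in $\mathrm{Wh}(v)$). From this, the presence of a peak at $\sigma v$ forces constraints relating the multipliers and defining sets of $\sigma$ and $\rho$, and one then argues by cases --- first kind versus second kind; equal, inverse, or independent multipliers; nested versus crossing defining sets --- producing in each case an explicit short replacement for $\rho\sigma$ whose intermediate lengths lie strictly below $||\sigma v||_A$. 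I expect the crossing-cuts bookkeeping (submodularity of the cut weight) that guarantees these intermediate lengths genuinely drop below the peak to be the delicate point; the surrounding induction of Steps 1--2 is routine, and plateaus are absorbed by iterating the basic lemma. Since all of this is done in full in \cite[Ch.~I.4, Prop.~4.17 and the accompanying peak-reduction lemma]{LS}, in the paper itself the proposition is simply quoted from there.
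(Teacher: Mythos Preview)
Your proposal is correct and is exactly the classical Whitehead peak-reduction argument from \cite[Ch.~I.4]{LS}; as you yourself anticipated in the last sentence, the paper does not prove this proposition at all but simply cites it as \cite[Proposition~4.17]{LS}. So there is nothing to compare: your sketch reproduces the source the paper defers to.
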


\begin{defn}[Whitehead algorithm]
Let $F_N=F(A)$ be free of rank $N\ge 2$, with a fixed free basis $A$.

$\bullet$ The \emph{Whitehead minimization algorithm} is the following process. Given $[w]\in \mathcal C_N$ put $[w_1]=[w]$. If $[w_i]$ is already constructed, check if there exists $\tau\in \mathcal W_N$ such that $||\tau(w_i)||_A<||w_i||_A$. If not, declare that $[w_i]\in\mathcal M([w])$ (that is $[w_i]$ is an $\Out(F_N)$-minimal element in $\Out(F_N)[w]$ and terminate the algorithm. Put $[w_{i+1}]=[\tau(w_i)]$.

$\bullet$ The \emph{Whitehead stabilization algorithm} is the following process. Suppose that $[w]\in \mathcal C_N$ is Whitehead-minimal (and therefore $\Out(F_N)$-minimal) with $||w||_A=n\ge 0$. Construct the component $\mathcal T_n[w]$ of $\mathcal T_n$ using the ``breadth-first'' stabilization process. Start with $S_1=\{[w]\}$. Now if a finite collection $S_i$  of conjugacy classes with $||.||_A=n$ is already constructed, for each element $[u]\in S_i$ and each $\tau\in \mathcal W_N$, put
\[
S_{i+1}=S_i\cup\{\tau([u])| [u]\in S_i, \tau\in \mathcal W_N \text{ and } ||\tau(u)||_A=n\}.
\]
Terminate the process with the output $S_i$ for the smallest $i\ge 1$ such that $S_{i+1}=S_i$. Declare that $S_i=V\mathcal T_n[w]=\mathcal M([w])$.

$\bullet$ The \emph{Whitehead algorithm} is the following process. Given $[w], [w']\in \mathcal C_N$, first apply the Whitehead minimization process to each of $[w], [w']$ to output elements $[u], [u']$ accordingly. Declare that $[u]\in \mathcal M([w])$ and $[u']\in \mathcal M([w'])$. If $||u||_A\ne ||u'||_A$, declare that $\Out(F_N)[w]\ne \Out(F_N)[w']$ and terminate the process. If $||u||_A=||u'||_A=0$, then $[u]=[u']=[1]$; declare the inputs equivalent and terminate. Thus suppose that $||u||_A=||u'||_A=n\ge1$. Apply the Whitehead stabilization algorithm to $[u]$ to produce the set $S$. Declare that $S=V\mathcal T_n[u]=\mathcal M([w])$. Then check whether $[u']\in S$. If $[u']\in S$, declare that $\Out(F_N)[w]=\Out(F_N)[w']$, and if $[u']\not\in S$, declare that $\Out(F_N)[w]\ne\Out(F_N)[w']$, and terminate the process.

\end{defn}

\begin{rem}\label{rem:com}
Part (1) of Proposition~\ref{p:wpr} implies that the Whitehead minimization algorithm on an input $[w]\in \mathcal C_N$  always terminates in $O(|w|_A^2)$ time and indeed outputs an element of $\mathcal M([w])$.  The quadratic time bound arises since going from $[w_i]$ to $[w_{i+1}]$ takes a priori linear time in $||w_i||_A$, and since $||w_1||_A>||w_2||_A>\dots$, the process terminates with some $[w_k]$ such that $k\le ||w||_A$.

Part (2) of Proposition~\ref{p:wpr} implies that the Whitehead stabilization algorithm on an $\Out(F_N)$-minimal input $[w]$ with $\|w\|_A=n$ outputs the set $\mathcal M([w])=V\mathcal T_n[w]$. In the standard word model, exploring a component with $V$ vertices takes $O(nV)$ time, since one tests only the fixed finite set $\mathcal W_N$ at each vertex and each test takes $O(n)$ time. Taken together, Proposition~\ref{p:wpr} implies that Whitehead's algorithm correctly decides whether or not $\Out(F_N)[w]=\Out(F_N)[w']$.

Overall, the a priori worst-case complexity of Whitehead's algorithm on the input $[w],[w']$ is exponential in $\max\{|w|_A, |w'|_A\}$ because for $[u]\in V\mathcal T_n$ the cardinality $\#V\mathcal T_n[u]$ is at most exponential in $n$.
\end{rem}

\begin{defn}
Suppose that $\mathfrak W\subseteq \Out(F_N)$ is a fixed finite set of \emph{auxiliary} automorphisms.
\begin{itemize}
\item The \emph{$\mathfrak W$-speed-up} of the Whitehead minimization algorithm takes an input $[w]\ne[1]$, computes
\[
\mathfrak W[w]=\{\psi[w]:\psi\in\mathfrak W\},
\]
and runs the Whitehead minimization algorithm in parallel on $[w]$ and on every element of $\mathfrak W[w]$, stopping when the first branch terminates. Since all branches remain in the orbit $\Out(F_N)[w]$, the output belongs to $\mathcal M([w])$.
\item The \emph{$\mathfrak W$-speed-up} of Whitehead's algorithm first applies this speeded-up minimization procedure to two nontrivial inputs $[w]$ and $[w']$, producing $[u]\in\mathcal M([w])$ and $[u']\in\mathcal M([w'])$, and then applies the usual stabilization procedure to decide whether $\Out(F_N)[w]=\Out(F_N)[w']$.
\end{itemize}
\end{defn}
Since $\mathfrak W$ is finite and fixed, the parallel branches may be implemented by a round-robin simulation with only a constant multiplicative overhead. Thus the a priori complexity estimates remain those of Remark~\ref{rem:com}, up to constants depending on $\mathfrak W$. When $\mathfrak W=\{\psi\}$, we also call this the $\psi$-speed-up.

\section{\texorpdfstring{$M$-minimality and Whitehead's algorithm}{M-minimality and Whitehead's algorithm}}\label{s:MLE}

Let $F_N=F(A)$ be free of rank $N\ge 2$  where $A=\{a_1,\dots, a_N\}$ is a fixed free basis of $F_N$.

\subsection{Main definitions}

\begin{defn}[$M$-minimal elements]\label{d:M}
Let $M\ge 1$ be an integer. A conjugacy class $[u]\in \mathcal C_N$ is called \emph{$M$-minimal} if the following condition holds:

Whenever  $\tau_1,\tau_2,\dots \tau_k\in \mathcal W_N$ are such that for $[u_i]=\tau_i\dots \tau_1([u])$, where $i=1,\dots, k$, we have
\[
||u||_A\ge ||u_1||_A\ge ||u_2||_A\ge \dots \ge  ||u_k||_A
\]
and that the conjugacy classes $[u],[u_1],\dots, [u_k]\in \mathcal C_N$ are distinct, then $k\le M$.

\end{defn}

\begin{lem}\label{lem:M}
Let $M\ge 1$ be an integer and let $[u]\in\mathcal C_N$ be $M$-minimal. Put $B=\#\mathcal W_N$ and
\[
M'=M'(M,N):=\sum_{j=0}^{M}B^j.
\]
Then the following hold.
\begin{enumerate}
\item If $\tau\in\mathcal W_N$ and $\|\tau(u)\|_A<\|u\|_A$, then $\tau([u])$ is $M$-minimal. If $\|\tau(u)\|_A=\|u\|_A$, then $\tau([u])$ is $2M$-minimal.
\item We have $\#\mathcal M([u])\le M'$.
\item Every $[u']\in\mathcal M([u])$ is $(M'-1)$-minimal.
\item Every strictly length-decreasing chain of Whitehead moves starting at $[u]$ has length at most $M$.
\end{enumerate}
\end{lem}
\begin{proof}
Let $[v]=\tau([u])$ with $\|v\|_A<\|u\|_A$. Any simple non-increasing chain starting at $[v]$ can be prefixed by the step $[u]\to[v]$; because all subsequent lengths are strictly smaller than $\|u\|_A$, the class $[u]$ cannot reappear. Hence the prefixed chain has length at most $M$, and $[v]$ is $M$-minimal.

Now suppose that $\|v\|_A=\|u\|_A$, and consider a simple non-increasing chain of length $k$ starting at $[v]$. If the chain does not contain $[u]$, prefixing it by $[u]\to[v]$ gives $k+1\le M$. If it first reaches $[u]$ after $j$ steps, all lengths along the initial segment are equal. Reversing that segment gives a simple non-increasing chain of length $j$ starting at $[u]$, while the remaining segment gives one of length $k-j$ starting at $[u]$. Thus $j\le M$ and $k-j\le M$, so $k\le 2M$.

To prove (2), first apply the Whitehead minimization algorithm to $[u]$, deleting repetitions if necessary, and obtain an orbit-minimal class $[v]$ by a strictly decreasing chain. By $M$-minimality this chain has length at most $M$. For any $[u']\in\mathcal M([u])$, Proposition~\ref{p:wpr}(2) gives a simple length-preserving chain from $[v]$ to $[u']$. Concatenating the two chains produces a simple non-increasing chain from $[u]$ to $[u']$; the two parts meet only at $[v]$, since every earlier vertex has length greater than the minimum. Hence the total length is at most $M$. There are at most $\sum_{j=0}^{M}B^j=M'$ possible endpoints of such labelled chains, proving (2).

If $[u']\in\mathcal M([u])$, every non-increasing chain starting at $[u']$ is length-preserving and remains inside $\mathcal M([u])$. A simple such chain has at most $M'$ vertices, and therefore length at most $M'-1$. This proves (3). Part (4) is immediate from Definition~\ref{d:M}.
\end{proof}

Recall that as defined in \cite{KSS06}, an element $[u]\in \mathcal C_N$ is called \emph{strictly minimal} if for every Whitehead automorphism of the second kind $\tau\in \mathcal W_N$ we have $||u||_A<||\tau(u)||_A$.

\begin{lem}\label{lem:sm}
Put $M=2^N N!$. Let $[u]\in \mathcal C_N$ be strictly minimal. Then $[u]$ is $M$-minimal.
\end{lem}
\begin{proof}
Let $[u]\in \mathcal C_N$ be strictly minimal. Let $\tau_1,\dots, \tau_k\in \mathcal W_N$ be such that for $[u_i]=\tau_i\dots\tau_1([u])$ with $i=1,\dots, k$ we have
\[
||u||_A\ge ||u_1||_A\ge \dots \ge ||u_k||_A
\]
and that the conjugacy classes $[u],[u_1],\dots, [u_k]\in \mathcal C_N$ are distinct.
As shown in \cite{KSS06}, in this case each $\tau_i$ is a Whitehead automorphism of the first kind and each $u_i$ is again strictly minimal, for $i=1,\dots, k$. Hence $\phi_i=\tau_i\dots\tau_1\in \Out(F_N)$ is a re-labelling automorphism for $i=1,\dots, k$. The number of re-labelling automorphisms of $F_N$ is equal to $2^N N!$. Since $[u_1],\dots, [u_k]\in \mathcal C_N$ are distinct, it follows that $\phi_1,\dots, \phi_k\in\Out(F_N)$ are distinct, and hence $k\le 2^N N!$. Thus $[u]$ is $M$-minimal for $M=2^N N!$, as required.

\end{proof}

In the context of this paper the sources of $M$-minimal elements (various random processes) provide elements with additional, sharper, metric properties, captured by the definitions below.

\begin{defn}\label{d:MLE}
Let $M\ge 1$ be an integer, let $\lambda>1$ and let $0< \epsilon<\lambda-1$.

A finite set $S$ of conjugacy classes of nontrivial elements of $F_N$ is called \emph{$(M,\lambda,\epsilon)$-minimizing} if it satisfies the following properties:

\begin{enumerate}
\item We have $\#(S)\le M$.
\item For any $[u],[u']\in S$ we have $\Out(F_N)[u]=\Out(F_N)[u']$.
\item For any $[u],[u']\in S$ we have $1-\epsilon\le \frac{||u'||_A}{||u||_A}\le 1+\epsilon$.
\item For every $[u]\in S$ and every $\phi\in \Out(F_N)$ such that $\phi([u])\not\in S$ we have $\frac{||\phi(u)||_A}{||u||_A}\ge \lambda>1+\epsilon$.
\end{enumerate}

In this case for any $[u]\in S$ we also say that $S$ is a \emph{$(M,\lambda,\epsilon)$-minimizing set for $[u]$}.

We say that a nontrivial conjugacy class $[u]$ in $F_N$ is \emph{$(M,\lambda,\epsilon)$-minimal} if there exists an $(M,\lambda,\epsilon)$-minimizing set $S$ for $[u]$  (and thus $[u]\in S$).
\end{defn}

Note that if $S$ is a $(M,\lambda,\epsilon)$-minimizing set and if $[u]\in S$ then for $\phi\in \Out(F_N)$ either $\phi(u)\in S$ or $\frac{||\phi(u)||_A}{||u||_A}\ge \lambda$, and these outcomes are mutually exclusive.

\begin{defn}\label{d:MLEW}
Let $M\ge 1$ be an integer, let $\lambda>1$ and let $0< \epsilon<\lambda-1$.

A finite set $S\subseteq \mathcal C_N$ of conjugacy classes of nontrivial elements of $F_N$ is called \emph{$(M,\lambda,\epsilon, \mathcal W_N)$-minimizing} if it satisfies the following properties:

\begin{enumerate}
\item We have $\#(S)\le M$.
\item For any $[u],[u']\in S$ we have $\Out(F_N)[u]=\Out(F_N)[u']$.
\item For any $[u],[u']\in S$ we have $1-\epsilon\le \frac{||u'||_A}{||u||_A}\le 1+\epsilon$.
\item For any $[u]\in S$ and $\tau\in \mathcal W_N$ exactly one of the following occurs:
\begin{itemize}
\item[(i)] We have $\tau([u])\in S$.
\item[(ii)] We have $\tau([u])\not\in S$ and $\frac{||\tau(u)||_A}{||u||_A}\ge \lambda>1+\epsilon$.
\end{itemize}
\end{enumerate}

In this case for any $[u]\in S$ we also say that $S$ is a \emph{$(M,\lambda,\epsilon, \mathcal W_N)$-minimizing set for $[u]$}.

We say that a nontrivial conjugacy class $[u]$ in $F_N$ is \emph{$(M,\lambda,\epsilon, \mathcal W_N)$-minimal} if there exists an $(M,\lambda,\epsilon, \mathcal W_N)$-minimizing set $S$ for $[u]$  (and thus $[u]\in S$).

\end{defn}

\begin{lem}\label{lem:aux}
Let $M\ge 1$ be an integer, let $\lambda>1$, $0<\epsilon<1$ be such that $\epsilon<\lambda-1$ and $\lambda (1-\epsilon)>1$. Let $S\subseteq \mathcal C_N$ be a finite set of conjugacy classes of nontrivial elements of $F_N$ such that $S$ is $(M,\lambda,\epsilon, \mathcal W_N)$-minimizing.
\begin{enumerate}
\item For any $[u]\in S$ and $\tau\in\mathcal W_N$ such that $||\tau(u)||_A\le (1+\epsilon) ||u||_A$ we have $\tau([u])\in S$.
\item For any $[u]\in S$ and $\phi\in\Out(F_N)$ such that $||\phi(u)||_A\le ||u||_A$ we have $\phi([u])\in S$.
\item For any $[u]\in S$ we have $\mathcal M([u])\subseteq S$. 
\end{enumerate}
\end{lem}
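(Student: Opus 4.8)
The plan is to prove the two statements in sequence, deriving (2) from (1) together with the peak-reduction lemma (Proposition~\ref{p:wprs}). Both parts are essentially bookkeeping on top of the definition of an $(M,\lambda,\epsilon,\mathcal W_N)$-minimizing set, using the hypotheses $\epsilon<\lambda-1$ and $\lambda(1-\epsilon)>1$ to exclude the ``increase'' alternative (ii).

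For part (1): fix $[u]\in S$ and $\tau\in\mathcal W_N$ with $\|\tau(u)\|_A\le(1+\epsilon)\|u\|_A$. By condition (4) of Definition~\ref{d:MLEW}, exactly one of (i) $\tau([u])\in S$ or (ii) $\tau([u])\notin S$ and $\|\tau(u)\|_A/\|u\|_A\ge\lambda$ holds. But $\|\tau(u)\|_A/\|u\|_A\le 1+\epsilon<\lambda$ by hypothesis, so (ii) is impossible; hence $\tau([u])\in S$. (This is just Lemma~\ref{lem:m} transcribed to the $\mathcal W_N$-setting, and is immediate.)

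For part (2): fix $[u]\in S$ and $\phi\in\Out(F_N)$ with $\|\phi(u)\|_A\le\|u\|_A$. Apply Proposition~\ref{p:wprs} with $[w]=[u]$, $[w']=[\phi(u)]$ to get a factorization $\phi=\tau_k\cdots\tau_1$ in $\Out(F_N)$ with $\tau_i\in\mathcal W_N$ and $\|\tau_i\cdots\tau_1 u\|_A\le\|u\|_A$ for all $i=1,\dots,k$. Now I would induct on $i$ to show $[\tau_i\cdots\tau_1 u]\in S$ for all $0\le i\le k$; the base case $i=0$ is $[u]\in S$. For the inductive step, set $[v]=[\tau_{i-1}\cdots\tau_1 u]\in S$ (inductive hypothesis). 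Since $[v]\in S$ and $[u]\in S$, condition (3) gives $\|v\|_A\le(1+\epsilon)\|u\|_A$; combining with $\|\tau_i\cdots\tau_1 u\|_A=\|\tau_i(v)\|_A\le\|u\|_A$, I get $\|\tau_i(v)\|_A\le\|u\|_A$. To apply part (1) to $[v]$ and $\tau_i$ I need $\|\tau_i(v)\|_A\le(1+\epsilon)\|v\|_A$. Here is the one place the hypothesis $\lambda(1-\epsilon)>1$ is used: if instead $\|\tau_i(v)\|_A>(1+\epsilon)\|v\|_A$, then since $\tau_i\in\mathcal W_N$ and $[v]\in S$, alternative (ii) of condition (4) forces $\|\tau_i(v)\|_A/\|v\|_A\ge\lambda$, whence $\|\tau_i(v)\|_A\ge\lambda\|v\|_A\ge\lambda(1-\epsilon)\|u\|_A>\|u\|_A$ (using condition (3) in the form $\|v\|_A\ge(1-\epsilon)\|u\|_A$), contradicting $\|\tau_i(v)\|_A\le\|u\|_A$. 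So $\|\tau_i(v)\|_A\le(1+\epsilon)\|v\|_A$, and part (1) gives $[\tau_i(v)]=[\tau_i\cdots\tau_1 u]\in S$, completing the induction. Taking $i=k$ yields $[\phi(u)]\in S$.

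The only mildly delicate point — and the one I would flag as the ``main obstacle'' — is the logical interplay in the inductive step: one cannot directly apply part (1) because the intermediate words $\tau_i\cdots\tau_1 u$ are only controlled relative to $\|u\|_A$, not relative to $\|v\|_A=\|\tau_{i-1}\cdots\tau_1 u\|_A$, so one must first upgrade $\|\tau_i(v)\|_A\le\|u\|_A$ to $\|\tau_i(v)\|_A\le(1+\epsilon)\|v\|_A$, and this is exactly where both hypotheses $\epsilon<\lambda-1$ and $\lambda(1-\epsilon)>1$ (equivalently: the ``gap'' between $1+\epsilon$ and $\lambda$, and the fact that $\lambda$ beats the $(1-\epsilon)$ deficit) are needed to rule out a length increase. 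Everything else is routine.
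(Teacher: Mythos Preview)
Your proof is correct and follows essentially the same approach as the paper's proof: part (1) is read off from conditions (3) and (4) of Definition~\ref{d:MLEW}, and part (2) uses the peak-reduction factorization from Proposition~\ref{p:wprs} followed by induction along the chain, with the hypothesis $\lambda(1-\epsilon)>1$ used exactly as you describe to rule out the length-increase alternative. The only cosmetic difference is that the paper argues the inductive step directly by contradiction from condition (4) (assume $[u_{i+1}]\notin S$, derive $\|u_{i+1}\|_A\ge\lambda(1-\epsilon)\|u\|_A>\|u\|_A$), whereas you first establish $\|\tau_i(v)\|_A\le(1+\epsilon)\|v\|_A$ and then invoke part (1); the underlying inequality chain is identical.
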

\begin{proof}
Part (1) follows from conditions (3), (4) of Definition~\ref{d:MLEW}.

For (2), suppose that  $[u]\in S$ and $\phi\in\Out(F_N)$ are such that $||\phi(u)||_A\le ||u||_A$. By Proposition~\ref{p:wprs}, there exist $\tau_1,\dots,\tau_k\in \mathcal W_N$ such that $\phi=\tau_k\dots\tau_1$ and that for $[u_0]=[u]$, $[u_i]=\tau_i\dots \tau_1([u])$ for $i=1,\dots, k$ we have $||u_i||_A\le ||u||_A$. Note that $[u_k]=\phi([u])$.

We argue by induction on $i$ that $[u_i]\in S$ for $i=1,\dots, k$.
We have $[u]=[u_0]\in S$. Suppose now $0\le i<k$ and $[u_i]\in S$.  We need to show that $[u_{i+1}]=\tau_{i+1}[u_i]\in S$. Suppose, on the contrary, that $[u_{i+1}]\not\in S$. Then $||u_{i+1}||_A/||u_i||_A\ge \lambda$. Since $[u], [u_i]\in S$, also have $||u_i||_A/||u||_A\ge 1-\epsilon$. Therefore $||u_{i+1}||_A/||u||_A\ge \lambda(1-\epsilon)$, so that $||u_{i+1}||_A\ge \lambda(1-\epsilon)||u||_A>||u||_A$ since $\lambda(1-\epsilon)>1$. This contradicts the choice of $\tau_1,\dots, \tau_k$.  Thus $[u_{i+1}]\in S$, as required.

Hence $[u_k]=\phi([u])\in S$, and part (2) of the lemma is verified.

Now part (2) directly implies part (3).
\end{proof}

The following result plays a key role in this paper:

\begin{prop}\label{prop:M}
Let $M\ge1$, let $\lambda>1$, and let $0<\epsilon<\lambda-1$. If $[u]\in\mathcal C_N$ is $(M,\lambda,\epsilon,\mathcal W_N)$-minimal, then $[u]$ is $M$-minimal.
\end{prop}
\begin{proof}
Let $S\subseteq \mathcal C_N$ be an $(M,\lambda,\epsilon,\mathcal W_N)$-minimizing set containing $[u]$.  

Suppose now that $\tau_1,\tau_2,\dots \tau_k\in \mathcal W_N$ are such that for $[u_i]=\tau_i\dots\tau_1([u])$, where $i=1,\dots, k$, we have
\[
||u||_A\ge ||u_1||_A\ge \dots \ge ||u_i||_A\ge \dots  \ge ||u_k||_A,
\]
and that the conjugacy classes $[u], [u_1],\dots, [u_k]$ are distinct.

We prove inductively that $[u_i]\in S$. If $[u_{i-1}]\in S$ but $[u_i]\notin S$, Definition~\ref{d:MLEW}(4)(ii) would give
\[
\|u_i\|_A\ge\lambda\|u_{i-1}\|_A>\|u_{i-1}\|_A,
\]
contrary to the assumed non-increase of lengths. Thus all $[u_i]$ lie in $S$. Since the conjugacy classes $[u],[u_1],\dots,[u_k]$ are distinct and $\#S\le M$, it follows that $k\le M-1$. Hence $[u]$ is $M$-minimal.
\end{proof}

\begin{rem}\label{r:cont}
The definitions directly imply that every $(M,\lambda,\epsilon)$-minimizing set is $(M,\lambda,\epsilon,\mathcal W_N)$-minimizing. Consequently, every $(M,\lambda,\epsilon)$-minimal conjugacy class is $(M,\lambda,\epsilon,\mathcal W_N)$-minimal.
\end{rem}

The following partial converse holds after slightly strengthening the parameters.

\begin{prop}\label{prop:cont}
Let $M\ge1$, let $\lambda>1$, and let $0<\epsilon<\lambda-1$. Let $0<\epsilon'<\epsilon$ and $\lambda'>\lambda$ satisfy
\[
\lambda'(1-\epsilon')>\lambda.
\]
If $S\subseteq\mathcal C_N$ is $(M,\lambda',\epsilon',\mathcal W_N)$-minimizing, then $S$ is $(M,\lambda,\epsilon)$-minimizing.
\end{prop}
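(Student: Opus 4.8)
The plan is to verify the four conditions in Definition~\ref{d:MLE} for $S$ with parameters $(M,\lambda,\epsilon)$, given that $S$ is $(M,\lambda',\epsilon',\mathcal W_N)$-minimizing with $\lambda'>\lambda>1$, $0<\epsilon'<\epsilon$, and $\lambda'(1-\epsilon')>\lambda$. Conditions (1) and (2) are immediate since they are literally identical in the two definitions and do not involve $\lambda$ or $\epsilon$. Condition (3) is also immediate: for $[u],[u']\in S$ we have $1-\epsilon'\le \|u'\|_A/\|u\|_A\le 1+\epsilon'$ by the $\mathcal W_N$-version, and since $\epsilon'<\epsilon$ this gives $1-\epsilon\le \|u'\|_A/\|u\|_A\le 1+\epsilon$. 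So the entire content of the proposition is condition (4).

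For condition (4), I would take $[u]\in S$ and $\phi\in\Out(F_N)$ with $\phi([u])\notin S$, and show $\|\phi(u)\|_A/\|u\|_A\ge\lambda$. The key idea is a dichotomy on $\|\phi(u)\|_A$ versus $\|u\|_A$. If $\|\phi(u)\|_A\le\|u\|_A$, then by Lemma~\ref{lem:aux}(2) (applied with the stringent parameters $\lambda',\epsilon'$, whose hypotheses $\epsilon'<\lambda'-1$ and $\lambda'(1-\epsilon')>1$ follow from $\lambda'>\lambda>1+\epsilon>1+\epsilon'$ and $\lambda'(1-\epsilon')>\lambda>1$) we would get $\phi([u])\in S$, contradicting our assumption. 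Hence $\|\phi(u)\|_A>\|u\|_A$. Now I want to upgrade this strict inequality to the factor $\lambda$. Here I would use peak reduction (Proposition~\ref{p:wprs}) in a slightly more careful form: factor $\phi=\tau_k\cdots\tau_1$, set $[u_0]=[u]$ and $[u_i]=\tau_i\cdots\tau_1([u])$, and locate the \emph{last} index $j$ at which $[u_j]\in S$; such a $j$ exists since $[u_0]\in S$, and $j<k$ since $[u_k]=\phi([u])\notin S$. Then $[u_{j+1}]=\tau_{j+1}([u_j])\notin S$ with $[u_j]\in S$, so by condition (4)(ii) of Definition~\ref{d:MLEW} (with parameter $\lambda'$) we get $\|u_{j+1}\|_A/\|u_j\|_A\ge\lambda'$. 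Combining with $\|u_j\|_A\ge(1-\epsilon')\|u\|_A$ (condition (3) for the stringent set, since $[u_j],[u]\in S$) yields $\|u_{j+1}\|_A\ge\lambda'(1-\epsilon')\|u\|_A>\lambda\|u\|_A$.

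The remaining point — and the one subtlety worth thinking about — is that $u_{j+1}$ is only an intermediate element, and I need the bound on $\|\phi(u)\|_A=\|u_k\|_A$, not on $\|u_{j+1}\|_A$. So I need a lower bound $\|u_k\|_A\ge\|u_{j+1}\|_A$, or at least $\|u_k\|_A\ge\lambda\|u\|_A$ directly. To get this I would instead apply peak reduction in the reverse direction: if $\|\phi(u)\|_A<\|u\|_A$ we already have a contradiction as above, so assume $\|\phi(u)\|_A\ge\|u\|_A$; but if moreover $\|\phi(u)\|_A\le(1+\epsilon')\|u\|_A$... no — the cleanest route is: suppose for contradiction $\|\phi(u)\|_A<\lambda\|u\|_A$. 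I would argue that then $\|\phi(u)\|_A\le(1+\epsilon')\|u\|_A$ is \emph{not} automatic, so instead I apply Lemma~\ref{lem:aux}(2) to $\phi^{-1}$ acting on $[\phi(u)]$: since $\|\phi^{-1}(\phi(u))\|_A=\|u\|_A\le\|\phi(u)\|_A$, if $[\phi(u)]$ were in $S$ we'd be done, but it isn't; so this doesn't immediately apply either. The honest resolution is to run the peak-reduction chain from $[u]$ to $[\phi(u)]$ and track when it \emph{re-enters} $S$: let $j$ be the largest index with $[u_j]\in S$; for $i>j$ all $[u_i]\notin S$. I claim $\|u_i\|_A$ is nondecreasing in length by at least... this is where the main work lies, and I expect the cleanest argument is simply: $\|u_k\|_A=\|\phi(u)\|_A$, and by Lemma~\ref{lem:aux}(2) applied to the automorphism $\tau_k\cdots\tau_{j+1}$ taking $[u_j]$ to $[u_k]$, \emph{if} $\|u_k\|_A\le\|u_j\|_A$ then $[u_k]\in S$ (using $[u_j]\in S$), contradiction; hence $\|u_k\|_A>\|u_j\|_A\ge(1-\epsilon')\|u\|_A$, which only gives $>\!(1-\epsilon')\|u\|_A$, not $\ge\lambda\|u\|_A$. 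So the factor $\lambda$ must come from condition (4)(ii) applied at the step leaving $S$ \emph{combined with} the fact that the chain never returns below once it has grown — and proving that monotonicity-type statement is the real obstacle. I would handle it by choosing the peak-reduction factorization so that $j=0$, i.e. by observing (via Proposition~\ref{p:wprs} applied to $\phi^{-1}$ with basepoint $[\phi(u)]$, reversing) that we may assume the chain leaves $S$ at the very first step and the lengths $\|u_i\|_A$ stay $\ge\|u_1\|_A\ge\lambda'(1-\epsilon')\|u\|_A>\lambda\|u\|_A$ throughout, using Lemma~\ref{lem:aux}(2) to forbid any later descent back to $\le\|u\|_A$. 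Thus $\|\phi(u)\|_A\ge\lambda\|u\|_A$, establishing condition (4) and completing the proof.
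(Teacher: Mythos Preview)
Your treatment of conditions (1)--(3) is fine, and you correctly identify that the content lies in (4). You also correctly invoke Lemma~\ref{lem:aux}(2) to get $\|\phi(u)\|_A>\|u\|_A$, and you see that the jump out of $S$ via a single Whitehead move gives the factor $\lambda'(1-\epsilon')$. But there is a genuine gap in how you pass from the intermediate bound $\|u_{j+1}\|_A\ge\lambda'(1-\epsilon')\|u\|_A$ to a bound on $\|\phi(u)\|_A$, and the workarounds you sketch at the end do not close it.

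The point you are missing is the \emph{direction} in which peak reduction (Proposition~\ref{p:wprs}) should be applied and what it buys you. Since $\|u\|_A<\|\phi(u)\|_A$, you apply Proposition~\ref{p:wprs} to $\phi^{-1}$, which takes $[\phi(u)]$ to $[u]$; reversing the resulting factorization gives $\phi=\tau_k\cdots\tau_1$ with the property that \emph{every} intermediate element satisfies $\|v_i\|_A\le\|\phi(u)\|_A$. That single inequality is what resolves your difficulty immediately: take $j=\min\{i\ge 0:[v_i]\notin S\}$ (the \emph{first} exit, not the last index in $S$). Then $[v_{j-1}]\in S$ and $[v_j]=\tau_j([v_{j-1}])\notin S$, so condition~(4)(ii) of Definition~\ref{d:MLEW} gives $\|v_j\|_A\ge\lambda'\|v_{j-1}\|_A\ge\lambda'(1-\epsilon')\|u\|_A$. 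Now simply use $\|\phi(u)\|_A\ge\|v_j\|_A$ from the peak-reduction bound, and you are done: $\|\phi(u)\|_A\ge\lambda'(1-\epsilon')\|u\|_A>\lambda\|u\|_A$.

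Your attempted fixes do not work: there is no reason the chain leaves $S$ at the very first step ($j=0$); the lengths $\|u_i\|_A$ are bounded \emph{above} by $\|\phi(u)\|_A$, not bounded below by $\|u_1\|_A$; and Lemma~\ref{lem:aux}(2) only constrains automorphisms applied to elements \emph{inside} $S$, so it says nothing about the $[u_i]$ with $i>j$. The paper's proof is exactly the argument above: one clean application of the upper bound from peak reduction, at the first exit from $S$.
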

\begin{proof}
Conditions (1)--(3) of Definition~\ref{d:MLE} follow immediately. Let $[u]\in S$ and let $\phi\in\Out(F_N)$ satisfy $\phi([u])\notin S$. Lemma~\ref{lem:aux}(2), applied with $\lambda',\epsilon'$, implies that $\|\phi(u)\|_A>\|u\|_A$.

Apply Proposition~\ref{p:wprs} to $\phi^{-1}$ and the pair $\phi([u]),[u]$. Reversing the resulting factorization gives
\[
\phi=\tau_k\cdots\tau_1
\]
with $\tau_i\in\mathcal W_N$ such that, for $[v_0]=[u]$ and $[v_i]=\tau_i\cdots\tau_1([u])$,
\[
\|v_i\|_A\le \|\phi(u)\|_A\qquad(0\le i\le k).
\]
Let $j\ge1$ be the first index for which $[v_j]\notin S$. Then $[v_{j-1}]\in S$, and Definition~\ref{d:MLEW}(4) gives
\[
\|v_j\|_A\ge\lambda'\|v_{j-1}\|_A.
\]
Since $[v_{j-1}],[u]\in S$, condition (3) gives $\|v_{j-1}\|_A\ge(1-\epsilon')\|u\|_A$. Therefore
\[
\|\phi(u)\|_A\ge\|v_j\|_A
 \ge\lambda'(1-\epsilon')\|u\|_A
 >\lambda\|u\|_A.
\]
Thus condition (4) of Definition~\ref{d:MLE} holds.
\end{proof}

\subsection{Behavior of Whitehead's algorithm}

\begin{prop}\label{p:MLE}
Let $M\ge 1$ be an integer and let $[u]\in \mathcal C_N$ be $M$-minimal.  Then the following hold:

\begin{enumerate}
\item If $\tau_1,\tau_2,\dots \tau_k\in \mathcal W_N$ are such that
\[
||u||_A>||\tau_1(u)||_A>||\tau_2\tau_1(u)||_A>\dots > ||\tau_k\dots \tau_2\tau_1(u)||_A
\]
then $k\le M$.

\item If a sequence $\tau_1,\tau_2,\dots,\tau_k\in \mathcal W_N$ as in (1) has the property that $[u_k]$ is $\mathcal W_N$-minimal then $[u_k]\in \mathcal M([u])$.

\item For any $[u']\in \mathcal M([u])$ we have $||u||_A\le 3^M ||u'||_A$.
\end{enumerate}
\end{prop}
\begin{proof}
Part (1) of the proposition is exactly part (4) of Lemma~\ref{lem:M}. Part~(2) holds by the general peak reduction properties of Whitehead's algorithm given in Proposition~\ref{p:wpr}.  

For part (3),  let $\tau_1,\tau_2,\dots \tau_k\in \mathcal W_N$ be as in (1) such that $[u_k]$ is $\mathcal W_N$-minimal. Then $[u_k]\in \mathcal M([u])$ so that $||u_k||_A=||u'||_A$. Note that each $\tau_i^{-1}$ is again an element of $\mathcal W_N$. For any Whitehead automorphism $\tau\in \mathcal W_N$ and any $w\in F_N$ we have $||\tau(w)||_A\le 3||w||_A$. Therefore $||u||_A\le 3^k||u_k||_A=3^k||u'||_A\le 3^M||u'||_A$.  Thus part (3) of the proposition holds.
\end{proof}

Remark~\ref{r:cont} and Proposition~\ref{prop:M} imply that the conclusions of Proposition~\ref{p:MLE} also apply to $(M,\lambda,\epsilon)$-minimal conjugacy classes.

\begin{defn}
Let $M\ge 1$ be an integer.

\begin{enumerate}
\item We denote by $U_N(M)$ the set of all $1\ne u\in F_N$ such that $[u]$ is  $M$-minimal.
\item We denote by $Y_N(M)$ the set of all $1\ne w\in F_N$ such that there exists $[u]\in \Out(F_N)[w]$ such that $[u]$ is  $M$-minimal. 


\item Let $\psi\in \Out(F_N)$. Denote by  $U_N(M,\psi)$ the set of all $1\ne w\in F_N$ such that $\psi([w])$ is  $M$-minimal.

\end{enumerate}
\end{defn}

\begin{lem}\label{lem:YN}
Let $M\ge1$, and let $M'=M'(M,N)$ be as in Lemma~\ref{lem:M}. If $u\in Y_N(M)$, then
\[
\#\mathcal M([u])\le M',
\]
and every element of $\mathcal M([u])$ is $(M'-1)$-minimal.
\end{lem}
\begin{proof}
Choose an $M$-minimal conjugacy class $[v]\in\Out(F_N)[u]$. Then $\mathcal M([u])=\mathcal M([v])$, and the result follows from Lemma~\ref{lem:M}(2),(3).
\end{proof}

We now summarize algorithmic properties of  $M$-minimal elements in relation to Whitehead's algorithm.

\begin{thm}\label{t:WHM} Let $M\ge 1$ be an integer.  Then there exists a constant $K\ge 1$ such that the following hold:
\begin{enumerate}
\item[(a)] For any $u\in U_N(M)$ the Whitehead minimization algorithm on the input $u$ terminates in time at most $K|u|_A$ and produces an element of $\mathcal M([u])$.

\item[(b)] For any $u_1,u_2\in U_N(M)$, the Whitehead algorithm for the automorphic equivalence problem in $F_N$ terminates in time at most $K\max \{|u_1|_A,|u_2|_A\}$, on the input $(u_1,u_2)$.
\item[(c)] For any $u_1\in Y_N(M)$ and any $1\ne u_2\in F_N$, the Whitehead algorithm for the automorphic equivalence problem in $F_N$ terminates in time at most $K\max \{|u_1|_A^2,|u_2|_A^2\}$, on the input $(u_1,u_2)$.
\item[(d)] For any $u_1\in U_N(M)$ and any $1\ne u_2\in F_N$, the Whitehead algorithm for the automorphic equivalence problem in $F_N$ terminates in time at most $K\max \{|u_1|_A,|u_2|_A^2\}$, on the input $(u_1,u_2)$.
\item[(e)] Let $\psi\in\Out(F_N)$ be a fixed element.  Then there is a constant $K'=K'(N,M,\psi)\ge1$ such that for any $u_1,u_2\in U_N(M,\psi)$, the $\psi$-speed-up of Whitehead's algorithm decides in time at most $K'\max \{|u_1|_A,|u_2|_A\}$, whether or not $\Out(F_N)[u_1]=\Out(F_N)[u_2]$.

\item[(f)] Let $\psi\in\Out(F_N)$ be a fixed element. Then there is a constant $K'=K'(N,M,\psi)\ge1$ such that for any $u_1\in U_N(M,\psi)$ and any $1\ne u_2\in F_N$, the $\psi$-speed-up of Whitehead's algorithm decides
in time at most $K'\max \{|u_1|_A,|u_2|_A^2\}$, whether or not $\Out(F_N)[u_1]=\Out(F_N)[u_2]$.
\end{enumerate}

\end{thm}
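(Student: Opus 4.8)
The plan is to derive all six parts from the structural results of Section~\ref{s:MLE}, principally Proposition~\ref{p:MLE} (which bounds $\#\mathcal M([u])\le M$ and describes the component $\mathcal T_n([u'])$), together with the general peak-reduction facts of Proposition~\ref{p:wpr} and the standard fact (recorded in Remark~\ref{rem:com}) that a single application of a Whitehead move to $[v]$ takes time $O(\#\mathcal W_N\cdot \max\{1,||v||_A\})$, which is linear in $|v|_A$ since $\#\mathcal W_N$ is a fixed constant depending only on $N$. The constant $K$ will be assembled at the end as a function of $N$, $M$, $\lambda$, $\epsilon$ only.

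First I would prove (a). Let $u\in U_N(M,\lambda,\epsilon)$ and let $S$ be an $(M,\lambda,\epsilon)$-minimizing set for $[u]$. Run the Whitehead minimization algorithm starting from $[u]$; it produces a strictly length-decreasing chain $\tau_1,\dots,\tau_k$. By part (5) of Proposition~\ref{p:MLE} every intermediate conjugacy class lies in $S$ and $k\le M-1$; by part (6) the terminal class $[u_k]$ lies in $\mathcal M([u])$. Each of the $\le M-1$ steps inspects all $\tau\in\mathcal W_N$ and so costs $O(\max\{1,||u_i||_A\})=O(|u|_A)$; hence the total running time is $O(M\cdot\#\mathcal W_N\cdot|u|_A)=O(|u|_A)$, and the output is in $\mathcal M([u])$ as claimed. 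For (b), apply (a) to both $u_1$ and $u_2$ to get $[v_1]\in\mathcal M([u_1])$, $[v_2]\in\mathcal M([u_2])$ in linear time; if $||v_1||_A\ne||v_2||_A$ the orbits differ and we stop. Otherwise run the stabilization algorithm on $[v_1]$: by part (3) of Proposition~\ref{p:MLE}, $V\mathcal T_n([v_1])=\mathcal M([u_1])$ has cardinality $\le M$ (part (1)), so the breadth-first enumeration stabilizes after $\le M$ rounds, each touching $\le M\cdot\#\mathcal W_N$ moves of linear cost; then test membership of $[v_2]$ in this set of size $\le M$. Total: $O(\max\{|u_1|_A,|u_2|_A\})$.

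Part (c) is the quadratic case. For $u_1\in Y_N(M,\lambda,\epsilon)$, by Lemma~\ref{lem:YN} whatever $\Out(F_N)$-minimal representative $[v_1]$ the plain Whitehead minimization algorithm outputs is $(M,\lambda,\epsilon)$-minimal; but plain minimization on $u_1$ only has the a priori quadratic bound $O(|u_1|_A^2)$ of Remark~\ref{rem:com}, which is all we need. Minimization on $u_2$ likewise costs $O(|u_2|_A^2)$. Once $[v_1]$ is obtained we know $\#\mathcal M([u_1])\le M$ and $V\mathcal T_n([v_1])=\mathcal M([u_1])$, so stabilization from $[v_1]$ and the final membership test cost only $O(|u_1|_A)$; the dominant term is $O(\max\{|u_1|_A^2,|u_2|_A^2\})$. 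Part (d) is the hybrid of (a) and (c): apply (a) to $u_1$ (linear, giving $[v_1]\in\mathcal M([u_1])$) and plain quadratic minimization to $u_2$ (giving $[v_2]\in\mathcal M([u_2])$ in $O(|u_2|_A^2)$), compare lengths, then run the bounded-size stabilization from $[v_1]$ in linear time and test $[v_2]$; total $O(\max\{|u_1|_A,|u_2|_A^2\})$. Finally (e) and (f): by definition of $U_N(M,\lambda,\epsilon;\psi)$, applying the fixed auxiliary automorphism $\psi$ to $[u_i]$ yields a $(M,\lambda,\epsilon)$-minimal class, and computing $\psi([u_i])$ from $[u_i]$ takes time linear in $|u_i|_A$ (as $\psi$ is fixed) and blows up length by at most a fixed multiplicative constant $C=C(\psi)$; so the $\psi$-speed-up feeds $\psi([u_i])\in U_N(M,\lambda,\epsilon)$ into the algorithm of (b), respectively the algorithm of (d) applied with $\psi([u_1])$ in the role of the minimizable input, and the conclusions follow with $K'=K'(N,M,\lambda,\epsilon,\psi)$ absorbing the extra factor $C$.

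The only genuinely delicate point — and the place I would be most careful — is ensuring that in parts (c), (d), (f) the stabilization phase is honestly linear, i.e. that once we are handed \emph{some} $\Out(F_N)$-minimal representative we may invoke part (3) of Proposition~\ref{p:MLE} to conclude the component $\mathcal T_n$ it lives in has $\le M$ vertices. This is exactly Lemma~\ref{lem:YN} combined with parts (1) and (3) of Proposition~\ref{p:MLE}, so no new argument is needed, but it must be stated explicitly: the quadratic cost in (c) comes \emph{solely} from the minimization sub-phase, never from stabilization. A secondary bookkeeping obstacle is that all implied constants must be shown to depend only on $N,M,\lambda,\epsilon$ (and $\psi$ in (e),(f)) and not on the inputs — this is automatic because $\#\mathcal W_N$ depends only on $N$, the chain lengths are bounded by $M-1$ via Proposition~\ref{p:MLE}(5), and the component sizes are bounded by $M$ via Proposition~\ref{p:MLE}(1),(3); one then takes $K$ to be the maximum of the finitely many constants produced in (a)–(d).
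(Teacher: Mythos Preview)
Your proposal is correct and follows essentially the same approach as the paper's proof: each part is reduced to the structural bounds $\#\mathcal M([u])\le M$ and $V\mathcal T_n([u'])=\mathcal M([u])$ from Proposition~\ref{p:MLE}, together with Lemma~\ref{lem:YN} for part~(c) and a precomputation of $\psi([u_i])$ for parts~(e),(f). The only cosmetic difference is that for part~(a) you invoke Proposition~\ref{p:MLE}(5),(6) to bound the length of the descending chain, whereas the paper phrases the same argument via Lemma~\ref{lem:m}; the content is identical.
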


\begin{proof}

Let $M'=M'(M,N)\ge 1$ be the constant provided by part (2) of Lemma~\ref{lem:M}.

(a) Let $u\in U_N(M)$ be arbitrary.  Proposition~\ref{p:MLE} implies that starting with $u$ and iteratively looking for Whitehead moves that decrease the $||.||_A$-length terminates after a chain of $\le M$ such moves with a conjugacy class that is Whitehead-minimal and therefore is $\Out(F_N)$-minimal, that is, an element of $\mathcal M([u])$. This process takes at most time $C_1|u|_A$ for some constant $C_1>0$ depending only on $N,M$.

(b) Let $u_1,u_2\in U_N(M)$ so that $[u_1],[u_2]$ are $M$-minimal. By part (a) above,  applying the Whitehead minimization algorithm to $[u_i]$ terminates in time $\le C_1|u_i|_A$  with an $\Out(F_N)$-minimal element $[u_i']\in \mathcal M([u_i])$ such that $n_i=||u_i'||_A\le ||u_i||_A\le |u_i|_A$. If $n_1\ne n_2$ then $\Out(F_N)[u_1]\ne \Out(F_N)[u_2]$ and we are done. Suppose that $n=n_1=n_2$. 

By Proposition~\ref{p:wpr} we have $\mathcal M([u_i])=V\mathcal T_n[u_i']$ for $i=1,2$. Moreover, by part (2) of Lemma~\ref{lem:M} we have $\#V\mathcal T_n[u_i']\le M'$ here. Since $M, M'$ are fixed, it takes linear time in $|u_i|_A$ to construct the graph $\mathcal T_n[u_i']$ from $u_i'$. Then $\Out(F_N)[u_1]=\Out(F_N)[u_2]$ if and only if $[u_2']\in V\mathcal T_n[u_1']$, and this condition can be checked in linear time in $\max\{|u_1|_A,|u_2|_A\}$. Summing up we get that the total running time of the Whitehead algorithm for the automorphic equivalence problem in $F_N$ is time at most $C_2\max \{|u_1|_A,|u_2|_A\}$, for some constant $C_2>0$ depending only on $N,M$.

(c) Now suppose that $u_1\in Y_N(M)$ and $1\ne u_2\in F_N$. We first apply the Whitehead minimization algorithm to each of $u_1,u_2$ to find $\Out(F_N)$-minimal elements $[u_i']\in \Out(F_N)[u_i]$ for $i=1,2$. Producing $u_i'$ from $u_i$ takes quadratic time in terms of $|u_i|_A$. By Lemma~\ref{lem:YN}, the set $\mathcal M([u_1])=\mathcal M([u_1'])$ has cardinality at most $M'$.

Put $n_i=\|u_i'\|_A$. If $n_1\ne n_2$ then $\Out(F_N)[u_1]\ne \Out(F_N)[u_2]$ and we are done. Suppose that $n=n_1=n_2$. Since $u_1'$ is $\Out(F_N)$-minimal, Proposition~\ref{p:wpr} gives $\mathcal M([u_1])=\mathcal M([u_1'])=V\mathcal T_n[u_1']$, and Lemma~\ref{lem:YN} gives $\#V\mathcal T_n[u_1']\le M'$. Then, since $M, M'$ are fixed, it takes at most linear time in $n=||u_1'||_A\le |u_1|_A$ to construct the graph $\mathcal T_n[u_1']$. Recall also that $[u_2']\in \mathcal M([u_2])$ and $\|u_2'\|_A=n$. Then we have $\Out(F_N)[u_1]=\Out(F_N)[u_2]$ if and only if $[u_2']\in V\mathcal T_n[u_1']$.  This last condition can be checked in linear time in $n$. Again, summing up we see that the total running time of the Whitehead algorithm on $(u_1,u_2)$ is at most $C_3\max \{|u_1|_A^2,|u_2|_A^2\}$, for some constant $C_3>0$ depending only on $N,M$.

(d) Now let $u_1\in U_N(M)$ and $1\ne u_2\in F_N$. We first apply the Whitehead minimization algorithm to each of $u_1,u_2$ to find $\Out(F_N)$-minimal elements $[u_i']\in \Out(F_N)[u_i]$ for $i=1,2$. As in (b), producing $u_1'$ from $u_1$ takes linear time in $|u_1|_A$, because $u_1$ is $M$-minimal. Producing $u_2'$ from $u_2$ takes at most quadratic time in $|u_2|_A$, by the general Whitehead's minimization algorithm properties. After that we proceed exactly in (c) above to decide if $[u_1']$ and $[u_2']$ are $\Out(F_N)$-equivalent.  Summing up we see that the total running time of the Whitehead algorithm on $(u_1,u_2)$ is at most $C_4\max \{|u_1|_A,|u_2|_A^2\}$ in this case, for some constant $C_4>0$ depending only on $N,M$.

(e) Choose a representative $\Psi\in\Aut(F_N)$ of $\psi$, and choose $C_\psi\ge1$ such that
\[
|\Psi(g)|_A\le C_\psi |g|_A
\qquad(g\in F_N).
\]
Let $u_i\in U_N(M,\psi)$. In the $\psi$-speed-up of minimization, the branch beginning at $\Psi(u_i)$ begins with an $M$-minimal conjugacy class and has input length at most $C_\psi|u_i|_A$. By part (a), that branch terminates in time $O(|u_i|_A)$. The round-robin implementation therefore makes the entire speeded-up minimization terminate in linear time. Let $[v_i]\in\mathcal M([u_i])$ be its output.

The orbit $\Out(F_N)[u_i]$ contains the $M$-minimal class $\psi([u_i])$, so Lemma~\ref{lem:YN} gives
\[
\#\mathcal M([u_i])\le M'.
\]
Moreover, with $C_0=\max\{1,C_\psi\}$, one has $\|v_i\|_A\le C_0|u_i|_A$. If $\|v_1\|_A\ne\|v_2\|_A$, the speeded-up Whitehead algorithm stops after minimization. If their common length is $n$, then
\[
V\mathcal T_n[v_1]=\mathcal M([u_1])
\]
has at most $M'$ vertices. Its breadth-first construction and the final membership test take $O(nM')$ time. Since $M'$ is fixed, the total running time is linear in $\max\{|u_1|_A,|u_2|_A\}$, proving (e).

(f) Apply the same speeded-up minimization to $u_1\in U_N(M,\psi)$; it terminates in $O(|u_1|_A)$ time and produces $[v_1]\in\mathcal M([u_1])$. On the arbitrary second input $u_2$, the original branch of the speed-up is the ordinary Whitehead minimization algorithm, so the whole round-robin procedure terminates in $O(|u_2|_A^2)$ time and produces $[v_2]\in\mathcal M([u_2])$. If the two minimal lengths agree, the component $V\mathcal T_n[v_1]=\mathcal M([u_1])$ still has at most $M'$ vertices, since the orbit of $u_1$ contains the $M$-minimal class $\psi([u_1])$. Thus stabilization takes only linear time in the common minimal length. The total running time is therefore
\[
O\bigl(\max\{|u_1|_A,|u_2|_A^2\}\bigr),
\]
which proves (f).

\end{proof}

We recall another basic fact related to Whitehead's algorithm which describes $\Out(F_N)$-stabilizers of conjugacy classes in $F_N$:

\begin{prop}\label{prop:LSST}\cite[Proposition~8.1]{KSS06}
Let $1\ne u\in F_N$ be such that $[u]$ is $\Out(F_N)$-minimal, and let $n=||u||_A$. Then for $\phi\in\Out(F_N)$ we have $\phi([u])=[u]$ if and only if there exists a sequence of Whitehead automorphisms $\tau_1,\dots, \tau_k\in \mathcal W_N$ such that for $[u_i]=\tau_{i}\dots \tau_1([u])$, we have $||u_i||_A=n$ for $i=1,\dots, k$ and $[u_k]=[u]$ and such that $\phi=\tau_k\dots \tau_1$ in $\Out(F_N)$.
\end{prop}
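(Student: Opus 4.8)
The plan is to deduce this directly from the general peak reduction statement in Proposition~\ref{p:wprs}, together with the $\Out(F_N)$-minimality hypothesis, which pins the cyclically reduced length down exactly along the chain.

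First I would handle the easy ``if'' direction: given a sequence $\tau_1,\dots,\tau_k\in\mathcal W_N$ with $u_i=\tau_i\dots\tau_1([u])$ satisfying $||u_i||_A=n$, $[u_k]=[u]$, and $\phi=\tau_k\dots\tau_1$ in $\Out(F_N)$, we immediately get $\phi([u])=\tau_k\dots\tau_1([u])=[u_k]=[u]$, so $\phi$ stabilizes $[u]$. (The length conditions are not even needed here, they are part of the structural description being characterized.)

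For the ``only if'' direction, suppose $\phi\in\Out(F_N)$ satisfies $\phi([u])=[u]$. Then $||\phi(u)||_A=||u||_A=n$, so in particular $||\phi(u)||_A\le ||u||_A$, and Proposition~\ref{p:wprs} applies with $w=u$, $w'=\phi(u)$: there is a factorization $\phi=\tau_k\dots\tau_1$ in $\Out(F_N)$ with each $\tau_i$ a Whitehead move and $||\tau_i\dots\tau_1(u)||_A\le n$ for $i=1,\dots,k$. If any of the $\tau_i$ equals $1$ in $\Out(F_N)$, delete it from the factorization; this changes neither $\phi$ nor the validity of the length inequalities, so we may assume $\tau_i\in\mathcal W_N$ for all $i$. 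Now set $u_i=\tau_i\dots\tau_1([u])$. Each $[u_i]$ lies in the orbit $\Out(F_N)[u]$, and since $[u]$ is $\Out(F_N)$-minimal we have $||u_i||_A\ge ||u||_A=n$; combined with the upper bound $||u_i||_A\le n$ from peak reduction, this forces $||u_i||_A=n$ for all $i=1,\dots,k$. Finally $[u_k]=\tau_k\dots\tau_1([u])=\phi([u])=[u]$, which gives exactly the asserted chain.

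I do not expect a genuine obstacle here: the statement is essentially a bookkeeping consequence of Proposition~\ref{p:wprs}. The only point requiring minor care is the interplay of the two length bounds — the $\le n$ coming from peak reduction and the $\ge n$ coming from minimality of $[u]$ — which together collapse to equality and let us read off that the whole chain stays inside the level set $\mathcal T_n$ and closes up at $[u]$. (One should also note that the case $k=0$ corresponds to $\phi=1$ in $\Out(F_N)$, which is consistent with the statement.)
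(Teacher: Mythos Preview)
Your proof is correct. The paper does not actually prove this proposition; it is stated as a ``basic fact'' recalled from the classical Whitehead theory (essentially \cite[Proposition~4.17]{LS} and its standard consequences), so there is no argument in the paper to compare against. Your derivation from Proposition~\ref{p:wprs} together with the $\Out(F_N)$-minimality of $[u]$ is exactly the standard way to see this: peak reduction gives the $\le n$ bound along the chain, minimality gives the $\ge n$ bound, and the two collapse to equality. The small bookkeeping about deleting any $\tau_i$ equal to $1$ in $\Out(F_N)$ (so that all moves genuinely lie in $\mathcal W_N$) is handled correctly.
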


\begin{rem}\label{rem:R}
By Definition~\ref{d:autgraph}, every oriented edge of $\mathcal T_n$ has a specified Whitehead label, including loop edges. Reading labels gives a homomorphism
\[
\rho_{[u]}:\pi_1(\mathcal T_n[u],[u])\longrightarrow\Out(F_N).
\]
Since at most $2\#\mathcal W_N$ oriented edge germs leave each vertex, the rank of the free group $\pi_1(\mathcal T_n[u],[u])$ is bounded above by a constant
\[
R=R\bigl(N,\#V\mathcal T_n[u]\bigr).
\]
If $[u]$ is $\Out(F_N)$-minimal and $n=\|u\|_A$, then $\mathcal M([u])=V\mathcal T_n[u]$.
\end{rem}

Proposition~\ref{prop:LSST} now directly implies:
\begin{cor}\label{cor:st}
Let  $1\ne u\in F_N$ be such that $[u]$ is $\Out(F_N)$-minimal, and let $n=||u||_A$. Then:
\begin{enumerate}
\item We have $\Stab_{\Out(F_N)}([u])=\rho_{[u]}\left(\pi_1(\mathcal T_n[u], [u])\right)$.
\item We have $\rank\,\Stab_{\Out(F_N)}([u])\le R(N, \#\mathcal M([u]))$, where $R$ is the constant provided by Remark~\ref{rem:R}. 
\end{enumerate}
\end{cor}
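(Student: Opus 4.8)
The plan is to obtain part (1) as a direct translation of Proposition~\ref{prop:LSST} into the language of loops in $\mathcal T_n[u]$, and then to get part (2) essentially for free from part (1) and the remark preceding the corollary.

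For part (1), the key dictionary is that an oriented edge-path in $\mathcal T_n$ starting at $[u]$ and traversing edges labelled $\tau_1,\dots,\tau_k\in\mathcal W_N$ in order is the same data as a length-$n$-stable chain of Whitehead moves at $[u]$: its successive vertices are $[u]=[u_0],[u_1],\dots,[u_k]$ with $[u_i]=\tau_i\cdots\tau_1([u])$ and $||u_i||_A=n$, and its $\rho_{[u]}$-label is the product $\tau_k\cdots\tau_1\in\Out(F_N)$. Granting this, the inclusion $\rho_{[u]}\big(\pi_1(\mathcal T_n[u],[u])\big)\subseteq Stab_{\Out(F_N)}([u])$ is immediate, since a closed loop at $[u]$ has $[u_k]=[u]$. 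Conversely, given $\phi\in Stab_{\Out(F_N)}([u])$, Proposition~\ref{prop:LSST} provides $\tau_1,\dots,\tau_k\in\mathcal W_N$ with $\phi=\tau_k\cdots\tau_1$, with $\tau_k\cdots\tau_1([u])=[u]$, and with $||\tau_i\cdots\tau_1(u)||_A=n$ for all $i$; each intermediate conjugacy class then has cyclically reduced length $n$ and, being joined to $[u]$ by the chain, lies in the component $\mathcal T_n[u]$, so the chain is a closed edge-loop at $[u]$ whose $\rho_{[u]}$-label is $\phi$. This gives $Stab_{\Out(F_N)}([u])=\rho_{[u]}\big(\pi_1(\mathcal T_n[u],[u])\big)$, which is part (1).

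Part (2) is then almost immediate. The graph $\mathcal T_n[u]$ is finite (there are only finitely many conjugacy classes of cyclically reduced length $n$) and connected, and every vertex is the initial point of at most $\#\mathcal W_N$ oriented edges; hence $\pi_1(\mathcal T_n[u],[u])$ is free of rank at most $K(N,\#V\mathcal T_n[u])$ — this is exactly the bound recorded in the paragraph preceding the corollary. By part (1), $Stab_{\Out(F_N)}([u])$ is a homomorphic image of $\pi_1(\mathcal T_n[u],[u])$ under $\rho_{[u]}$, and the minimal number of generators of a quotient never exceeds that of the group, so ${\rm rank}\, Stab_{\Out(F_N)}([u])\le K(N,\#V\mathcal T_n[u])$.

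The one place that needs care, and which I expect to be the only (mildly technical) obstacle, is the reverse inclusion in part (1): one must make sure the length-$n$-stable chain furnished by Proposition~\ref{prop:LSST} really does trace an edge-path in $\mathcal T_n[u]$, since a move $\tau_i$ occurring in the chain may fix $[u_{i-1}]$ — this already happens for suitable relabelling automorphisms — whereas $\mathcal T_n$ as defined carries no loop-edges. I would dispatch this by harmlessly augmenting $\mathcal T_n[u]$ with a loop-edge at each vertex $[v]$ for every $\tau\in\mathcal W_N$ with $\tau([v])=[v]$: since for each $\tau\in\mathcal W_N$ a given vertex contributes at most one outgoing genuine edge or one loop, this leaves the bound "at most $\#\mathcal W_N$ oriented edges per vertex" (hence the estimate in part (2)) unchanged, and after the augmentation every length-$n$-stable chain is literally a closed edge-path, so all of $Stab_{\Out(F_N)}([u])$ is realized by labels of loops. (Equivalently, one checks that when $\#V\mathcal T_n[u]\ge 2$ such stationary moves are redundant as generators, the degenerate case $\#V\mathcal T_n[u]=1$ being handled directly.) In every reading the statement of the corollary, and in particular the rank bound, is unaffected.
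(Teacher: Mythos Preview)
Your proof is correct and follows exactly the approach the paper intends: the paper simply asserts that the corollary ``directly implies'' from Proposition~\ref{prop:LSST} and the remark about the rank bound on $\pi_1(\mathcal T_n[u],[u])$, without spelling out the argument. Your explicit two-inclusion proof of part~(1) and the quotient-rank argument for part~(2) are precisely what is meant.

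You are also right to flag the loop-edge issue: the paper's definition of $\mathcal T$ only places edges between \emph{distinct} vertices $[w]\ne[w']$, so a Whitehead move $\tau\in\mathcal W_N$ fixing some $[u_{i-1}]$ in the length-stable chain does not correspond to an edge, and the literal statement $Stab_{\Out(F_N)}([u])=\rho_{[u]}(\pi_1(\mathcal T_n[u],[u]))$ need not hold as written. The paper's ``directly implies'' glosses over this. Your fix---adjoining, for each vertex $[v]$ and each $\tau\in\mathcal W_N$ with $\tau([v])=[v]$, a loop-edge labelled $\tau$---is the natural and correct repair, and since it still leaves at most $\#\mathcal W_N$ outgoing oriented edges at each vertex, the rank bound $K(N,\#V\mathcal T_n[u])$ is unaffected. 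This is a genuine (if minor) improvement on the paper's treatment.
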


\begin{prop}\label{prop:st}
Let $M\ge 1$ be an integer.  Let $M'=M(N,M)$ be the constant provided by Lemma~\ref{lem:M}. Let $u\in Y_N(M)$ (that is $1\ne u\in F_N$ and the orbit $\Out(F_N)[u]$ contains an $M$-minimal element).
Then
\[
\rank\,\Stab_{\Out(F_N)}([u])\le R(N, M'),
\]
where $R$ is the constant provided by Remark~\ref{rem:R}.
\end{prop}
\begin{proof}
Let $[u']$ be an $\Out(F_N)$-minimal element in $\Out(F_N)[u]$, and put $n=\|u'\|_A$. By Proposition~\ref{p:wpr}, $\mathcal M([u])=\mathcal M([u'])=V\mathcal T_n[u']$, and Lemma~\ref{lem:YN} gives $\#\mathcal M([u])\le M'$. Corollary~\ref{cor:st} bounds the rank of $\Stab_{\Out(F_N)}([u'])$. Since the stabilizers of two classes in the same $\Out(F_N)$-orbit are conjugate, the same bound holds for $\Stab_{\Out(F_N)}([u])$.
\end{proof}

\subsection{Algorithmic detectability}\label{s:alg}

For a finite nonempty subset $S\subseteq \mathcal C_N$ denote $||S||_A=\max\{||u||_A|[u]\in S\}$.

\begin{prop}\label{p:algM}
Let $N\ge 2$ and let $M\ge 1$ be fixed integers. Then there exists a linear time, in terms of $|w|_A$, algorithm that, given $w\in F_N$, decides whether or not $[w]$ is $M$-minimal.
\end{prop}
\begin{proof}
Put $B=\#\mathcal W_N$. The number of sequences $\tau_1,\dots, \tau_{M+1}$ of length $M+1$ of elements of $\mathcal W_N$ is equal to $B^{M+1}$. 
Given $w\in F_N$, for each sequence  $\mathbf t=\tau_1,\dots, \tau_{M+1}$ as above we compute the elements $w_1=\tau_1(w)$, $w_2=\tau_2\tau_1(w), \dots, w_{M+1}=\tau_{M+1}\dots \tau_1(w)$ in $F_N$. We then check whether it is true that $||w||_A\ge ||w_1||_A\ge \dots \ge ||w_{M+1}||_A$ and that $[w], [w_1],\dots, [w_{M+1}]\in \mathcal C_N$ are distinct. For each $\mathbf t$ this check can be done in linear time in terms of $|w|_A$ since $M$ is fixed. If we find a sequence $\mathbf t$ of length $M+1$ such that the above condition holds then $[w]$ is not $M$-minimal. Otherwise $[w]$ is $M$-minimal. Since the number of sequences $\mathbf t$ that need to be considered is equal to $B^{M+1}$, which is a constant, the total running time of this algorithm is linear in terms of $|w|_A$. 
\end{proof}

\begin{rem}
Fix an integer $M\ge1$ and rational numbers $0<\epsilon<1$ and $\lambda>1+\epsilon$.

(1) Since $\mathcal W_N$ is finite, one can decide in linear time in $||S||_A$ whether a given set $S\subseteq\mathcal C_N-\{[1]\}$ of cardinality at most $M$ is $(M,\lambda,\epsilon,\mathcal W_N)$-minimizing.

(2) One can also decide whether such an $S$ is $(M,\lambda,\epsilon)$-minimizing. Conditions (1) and (3) of Definition~\ref{d:MLE} are checked directly, and condition (2) is checked with Whitehead's algorithm. To check condition (4), fix $[u]\in S$ and enumerate all conjugacy classes $[v]$ with
\[
||v||_A<\lambda||u||_A.
\]
There are exponentially many such classes in $||u||_A$. For each one, Whitehead's algorithm decides whether $[v]\in\Out(F_N)[u]$; condition (4) holds precisely when every orbit element found in this finite range already belongs to $S$. Thus, for fixed $M,\lambda,\epsilon$, this test has a coarse exponential-time bound in $||S||_A$.

(3) Consequently, one can decide whether a given $[u]$ is $(M,\lambda,\epsilon)$-minimal. Indeed, condition (3) forces every member of a minimizing set containing $[u]$ to have length at most $(1+\epsilon)||u||_A$. Enumerate the finitely many subsets of that ball having cardinality at most $M$ and containing $[u]$, and apply the test in (2). Since $M$ is fixed, this again gives an exponential-time procedure in $||u||_A$.
\end{rem}

It turns out that deciding whether an element $[u]$ is $(M,\lambda,\epsilon, \mathcal W_N)$-minimal can be done in linear time in $||u||_A$ (under slightly more stringent assumptions in $\lambda,\epsilon$).

\begin{lem}\label{lem:aux1}
Let $M\ge 1$ be an integer, let $\lambda>1$, $0<\epsilon<1$ be such that $\epsilon<\lambda-1$ and $\lambda \frac{1-\epsilon}{1+\epsilon}>1$. Let $S\subseteq \mathcal C_N$ be a finite set of conjugacy classes of nontrivial elements of $F_N$ such that $S$ is $(M,\lambda,\epsilon, \mathcal W_N)$-minimizing. Let $[u]\in S$.

Then for $[u']\in \mathcal C_N, [u']\ne [u]$ the following conditions are equivalent:

\begin{enumerate}
\item We have $[u']\in S$.
\item There exists a chain $\tau_1,\dots, \tau_k\in \mathcal W_N$ such that $k\le M$, that $\tau_k\dots\tau_1[u]=[u']$ and that with $[u_0]=[u]$ $[u_i]=\tau_i\dots\tau_1[u]$ we have $||u_{i+1}||_A\le (1+\epsilon)||u_i||_A$ for $0\le i<k$.
\end{enumerate}

\end{lem}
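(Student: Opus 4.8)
The plan is to prove the two implications separately, and the first thing to record is that the hypotheses $\epsilon<\lambda-1$ and $\lambda\frac{1-\epsilon}{1+\epsilon}>1$ imply $\lambda(1-\epsilon)>1+\epsilon>1$, so the set $S$ satisfies all the hypotheses of Lemma~\ref{lem:aux} and I will use that lemma freely.

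\emph{Direction (2)$\Rightarrow$(1).} This is a routine induction on $i$ and does not use the bound $k\le M$. Given a chain $\tau_1,\dots,\tau_k\in\mathcal W_N$ with $\tau_k\cdots\tau_1[u]=[u']$ and $||u_{i+1}||_A\le(1+\epsilon)||u_i||_A$, where $[u_0]=[u]$ and $[u_i]=\tau_i\cdots\tau_1[u]$, I show $[u_i]\in S$ for all $i$. The base case $[u_0]=[u]\in S$ is given. For the step, if $[u_i]\in S$ then $\tau_{i+1}\in\mathcal W_N$ satisfies $||\tau_{i+1}(u_i)||_A=||u_{i+1}||_A\le(1+\epsilon)||u_i||_A$, so part (1) of Lemma~\ref{lem:aux} yields $[u_{i+1}]=\tau_{i+1}([u_i])\in S$. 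In particular $[u']=[u_k]\in S$.

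\emph{Direction (1)$\Rightarrow$(2).} The plan is to work in the finite graph $G_S$ with vertex set $S$ in which two distinct vertices $[a],[b]$ are joined by an edge whenever $\tau([a])=[b]$ for some $\tau\in\mathcal W_N$ (a symmetric relation, since $\mathcal W_N$ is closed under inversion). I claim it suffices to show that $[u]$ and $[u']$ lie in the same connected component of $G_S$: then, as $\#S\le M$, there is a simple path $[u]=[u_0],[u_1],\dots,[u_k]=[u']$ with $1\le k\le M-1\le M$; picking for each $i$ a move $\tau_{i+1}\in\mathcal W_N$ realizing the $i$-th edge gives a chain with $\tau_i\cdots\tau_1[u]=[u_i]$ and $\tau_k\cdots\tau_1[u]=[u']$, and since all $[u_i]$ are in $S$, condition (3) of Definition~\ref{d:MLEW} forces $||u_{i+1}||_A\le(1+\epsilon)||u_i||_A$, which is exactly (2). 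For connectivity I use two observations. First, starting from any $[a]\in S$ and running the Whitehead minimization algorithm — repeatedly applying a length-strictly-decreasing Whitehead move, which exists as long as the current class is not $\Out(F_N)$-minimal by Proposition~\ref{p:wpr}(1) — one stays inside $S$ at every step by part (2) of Lemma~\ref{lem:aux}, the process terminates (lengths are non-negative integers, strictly decreasing), and it ends at an $\Out(F_N)$-minimal element $[v_a]\in S$ in the common orbit of $S$; the intermediate classes form a walk in $G_S$ from $[a]$ to $[v_a]$. Second, if $[v],[v']\in S$ are both $\Out(F_N)$-minimal then they lie in the same orbit, hence by Proposition~\ref{p:wpr}(2) have a common length $n$ and are joined by a length-stable chain of Whitehead moves; an induction using condition (4) of Definition~\ref{d:MLEW} shows every term of that chain is in $S$, since the alternative to membership would force a length ratio $\ge\lambda>1$ along a length-stable chain, which is impossible — so this chain is a walk in $G_S$. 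Combining: the walk from $[u]$ to $[v_u]$, a walk from $[v_u]$ to $[v_{u'}]$, and the walk from $[v_{u'}]$ to $[u']$ show that $[u]$ and $[u']$ are in the same component of $G_S$.

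\emph{Main obstacle.} The only non-formal point is confining the various Whitehead chains to $S$ as one moves between arbitrary elements of $S$, and this is exactly where the quantitative hypothesis $\lambda\frac{1-\epsilon}{1+\epsilon}>1$ enters, via Lemma~\ref{lem:aux}: for length-decreasing steps confinement is immediate, while for the length-stable steps between $\Out(F_N)$-minimal elements the ratio $1$ lies strictly below the escape threshold $\lambda$. Everything else — finiteness and symmetry of $G_S$, existence of short simple paths in a connected finite graph, and the peak-reduction inputs of Proposition~\ref{p:wpr} — is standard bookkeeping.
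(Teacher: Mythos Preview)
Your proof is correct, and it takes a genuinely different route from the paper's argument for the direction (1)$\Rightarrow$(2).

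The paper proceeds directly: since $[u],[u']\in S$ lie in the same $\Out(F_N)$-orbit with $||u'||_A\le(1+\epsilon)||u||_A$, it invokes the full peak-reduction statement Proposition~\ref{p:wprs} to obtain a Whitehead chain from $[u]$ to $[u']$ with all intermediate lengths $\le(1+\epsilon)||u||_A$, eliminates repetitions, and then runs a dichotomy: either every successive ratio is $\le 1+\epsilon$ (Case~1, which gives the conclusion), or some step overshoots (Case~2), in which case one pins down the first escape and derives $||u_{i_0}||_A\ge\lambda(1-\epsilon)||u||_A>(1+\epsilon)||u||_A$, a contradiction. This is where the paper genuinely uses the sharp hypothesis $\lambda\frac{1-\epsilon}{1+\epsilon}>1$.

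Your approach instead builds the auxiliary graph $G_S$ and reduces the problem to connectivity, which you establish via Whitehead minimization (length-decreasing moves stay in $S$ by condition~(4), since the ratio drops below $1<\lambda$) followed by a length-stable chain between minimal elements (Proposition~\ref{p:wpr}(2), again confined to $S$ by condition~(4)). This avoids Proposition~\ref{p:wprs} entirely and uses only the more basic Proposition~\ref{p:wpr}. In fact your argument, as it stands, never needs the full strength of $\lambda\frac{1-\epsilon}{1+\epsilon}>1$: the length-decreasing and length-stable steps are both confined to $S$ directly from condition~(4) of Definition~\ref{d:MLEW}, and the final ratio bound $||u_{i+1}||_A\le(1+\epsilon)||u_i||_A$ along the simple path comes for free from condition~(3) once all vertices are in $S$. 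Your appeal to Lemma~\ref{lem:aux} is convenient but could be bypassed, so your route is slightly more elementary and in principle works under the weaker hypothesis $\lambda>1+\epsilon$ alone. The paper's approach, by contrast, is more direct and yields the chain in one stroke without the detour through minimal elements.
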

\begin{proof}
Assume first that (2) holds. Starting with $[u_0]=[u]\in S$, Lemma~\ref{lem:aux}(1) applies successively to the displayed inequalities and shows that every $[u_i]$ lies in $S$. Hence $[u']=[u_k]\in S$, proving (1).

Conversely, suppose that $[u']\in S$. Then
\[
\|u'\|_A\le(1+\epsilon)\|u\|_A.
\tag{3.1}
\]
Choose $\phi\in\Out(F_N)$ with $\phi([u])=[u']$. If $\|u'\|_A\le\|u\|_A$, apply Proposition~\ref{p:wprs} directly to $\phi$. If $\|u'\|_A>\|u\|_A$, apply that proposition to $\phi^{-1}$ and the pair $[u'],[u]$, and then reverse the resulting chain, replacing every Whitehead move by its inverse. In either case we obtain Whitehead moves $\tau_1,\dots,\tau_k$ such that, with
\[
[u_0]=[u],\qquad [u_i]=\tau_i\cdots\tau_1([u]),
\]
one has $[u_k]=[u']$ and
\[
\|u_i\|_A\le\max\{\|u\|_A,\|u'\|_A\}
\le(1+\epsilon)\|u\|_A
\qquad(0\le i\le k).
\tag{3.2}
\]
Deleting closed subchains if necessary, we may assume that the classes $[u_0],\dots,[u_k]$ are distinct.

We claim that every step satisfies
\[
\|u_i\|_A\le(1+\epsilon)\|u_{i-1}\|_A.
\tag{3.3}
\]
Otherwise, let $i$ be the first index for which (3.3) fails. Lemma~\ref{lem:aux}(1) shows inductively that $[u_0],\dots,[u_{i-1}]$ lie in $S$. Condition (3) of Definition~\ref{d:MLEW} then implies that $[u_i]\notin S$, and condition (4)(ii) gives
\[
\|u_i\|_A\ge\lambda\|u_{i-1}\|_A
\ge\lambda(1-\epsilon)\|u\|_A
>(1+\epsilon)\|u\|_A,
\]
where the final inequality is the hypothesis
$\lambda(1-\epsilon)/(1+\epsilon)>1$. This contradicts (3.2), proving the claim.

Another induction using Lemma~\ref{lem:aux}(1) now shows that all $[u_i]$ lie in $S$. They are distinct and $\#S\le M$, so $k\le M-1$, in particular $k\le M$. Thus (2) holds.
\end{proof}

\begin{cor}\label{c:alg}
Let $M\ge 1$ be an integer, let $\lambda>1$, $0<\epsilon<1$ be rational numbers such that $\epsilon<\lambda-1$  and $\lambda \frac{1-\epsilon}{1+\epsilon}>1$.
Then there is an algorithm that, given $1\ne u\in F_N$ decides in linear time in $|u|_A$ whether or not $[u]$ is $(M,\lambda,\epsilon, \mathcal W_N)$-minimal.
\end{cor}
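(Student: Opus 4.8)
The plan is to reduce the problem to checking a single, uniquely determined candidate for a minimizing set, and then to invoke the linear-time verification supplied by part (1) of the Remark in Section~\ref{s:alg}. Given a nontrivial $[u]\in\mathcal C_N$, presented by a cyclically reduced word of length $n=||u||_A$ (recall $\epsilon,\lambda$ are rational, so the tests below are exact), I would run a breadth-first search of depth at most $M$ starting from $[u]$: whenever a conjugacy class $[v]$ has been produced at depth $i<M$, for each $\tau\in\mathcal W_N$ I form $[\tau(v)]$ and retain it, at depth $i+1$, precisely when $||\tau(v)||_A\le(1+\epsilon)||v||_A$. Let $T\subseteq\mathcal C_N$ be the set of all conjugacy classes produced in this way. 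Then $[u]\in T$, and for $[u']\ne[u]$ one has $[u']\in T$ exactly when there is a chain $\tau_1,\dots,\tau_k\in\mathcal W_N$ of the shape occurring in Lemma~\ref{lem:aux1}(2). The algorithm then does the following: if $\#T>M$, output that $[u]$ is \emph{not} $(M,\lambda,\epsilon,\mathcal W_N)$-minimal; otherwise use the procedure from part (1) of the Remark in Section~\ref{s:alg} to decide whether the set $T$ (which now has at most $M$ elements) is $(M,\lambda,\epsilon,\mathcal W_N)$-minimizing, and answer ``yes'' if and only if it is.

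For the running time, I would note that along any retained chain the $||\cdot||_A$-length multiplies by a factor of at most $1+\epsilon$ at each of at most $M$ steps, so every $[v]\in T$ satisfies $||v||_A\le(1+\epsilon)^M n$, and $\#T\le(\#\mathcal W_N)^M$. Since $M$, $\epsilon$ and $\#\mathcal W_N$ are fixed, the search visits only a bounded number of conjugacy classes, each represented by a word of length $O(n)$, and the basic operations involved --- applying a fixed Whitehead automorphism, freely and cyclically reducing, and testing equality of two cyclic words --- are each $O(n)$. Hence $T$ is built in time $O(n)$, and by part (1) of the Remark in Section~\ref{s:alg} the verification step runs in time linear in $||T||_A=O(n)$. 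So the whole procedure is linear in $n=||u||_A$.

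Correctness has an easy direction and a slightly subtler one. If the algorithm answers ``yes'', then $T$ is an $(M,\lambda,\epsilon,\mathcal W_N)$-minimizing set containing $[u]$, so $[u]$ is $(M,\lambda,\epsilon,\mathcal W_N)$-minimal by Definition~\ref{d:MLEW}. Conversely, suppose $[u]$ is $(M,\lambda,\epsilon,\mathcal W_N)$-minimal, and let $S$ be an $(M,\lambda,\epsilon,\mathcal W_N)$-minimizing set with $[u]\in S$. The standing hypotheses $\epsilon<\lambda-1$ and $\lambda\frac{1-\epsilon}{1+\epsilon}>1$ are exactly those of Lemma~\ref{lem:aux1}; applying that lemma to $S$ and $[u]$ shows that for $[u']\ne[u]$ one has $[u']\in S$ if and only if the chain condition of Lemma~\ref{lem:aux1}(2) holds, i.e. if and only if $[u']\in T$. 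Since $[u]$ lies in both $S$ and $T$, this forces $S=T$; in particular $\#T=\#S\le M$, so the algorithm does not abort at the cardinality test, and the verification step recognizes $T=S$ as $(M,\lambda,\epsilon,\mathcal W_N)$-minimizing and answers ``yes''.

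The one genuinely essential point --- and the only place where the extra stringency $\lambda\frac{1-\epsilon}{1+\epsilon}>1$ is used --- will be the appeal to Lemma~\ref{lem:aux1}: it is what forces the a priori unconstrained minimizing set $S$, whenever it exists, to coincide with the explicit bounded-depth search output $T$, thereby converting an infinite search over candidate subsets into a single bounded check. Everything else --- the length bound $(1+\epsilon)^M n$, the cardinality bound $(\#\mathcal W_N)^M$, and the citation of part (1) of the Remark in Section~\ref{s:alg} for the final verification --- is routine bookkeeping. This also explains why the corresponding decision problem for $(M,\lambda,\epsilon)$-minimality, where no such ``local'' characterization via Whitehead moves is available, still has only an a priori exponential-time solution.
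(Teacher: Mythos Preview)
Your proof is correct and follows essentially the same approach as the paper's: build the candidate set by enumerating all chains of at most $M$ Whitehead moves satisfying the ratio condition of Lemma~\ref{lem:aux1}(2), invoke Lemma~\ref{lem:aux1} to conclude that any $(M,\lambda,\epsilon,\mathcal W_N)$-minimizing set containing $[u]$ must coincide with this candidate, and then verify Definition~\ref{d:MLEW} for the candidate in linear time. Your presentation is in fact more explicit than the paper's about the running-time bookkeeping (the bounds $(1+\epsilon)^M n$ and $(\#\mathcal W_N)^M$) and about the early cardinality abort, but the underlying argument is the same.
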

\begin{proof}
Suppose we are given an input $1\ne u\in F_N$. We need to decide if there exists an $(M,\lambda,\epsilon, \mathcal W_N)$-minimizing set $S$ containing $[u]$.

We first enumerate all chains of $k\le M$ Whitehead moves as in part (2) of Lemma~\ref{lem:aux1} and collect all $[u']$ reachable from $[u]$ by applying such chains. Denote the resulting subset of $\mathcal C_N$ by $S'$. Computing $S'$ from $[u]$ takes at most linear time in $|u|_A$ since $M$ is fixed and the set $\mathcal W_N$ is also finite and fixed.

Lemma~\ref{lem:aux1} implies that if $[u]$ belongs to some $(M,\lambda,\epsilon, \mathcal W_N)$-minimizing set $S$ then $S=S'$.
We then check if conditions (1)-(4) of  Definition~\ref{d:MLEW} hold for $S'$. Again this can be done in linear time in $|u|_A$ since $M$ is fixed.

We conclude that $[u]$ is $(M,\lambda,\epsilon, \mathcal W_N)$-minimal if and only if conditions (1)-(4) of  Definition~\ref{d:MLEW} do hold for $S'$.
\end{proof}

\section{Geodesic currents on free groups}

We provide some basic background on geodesic currents on $F_N$ here and refer the reader to \cite{Ka06,KL09,KL10} for further details. For the remainder of this section, let $F_N$ be a free group of finite rank $N\ge 2$. We denote by $\partial F_N$ the hyperbolic boundary of $F_N$ and denote $\dd:=\{(x,y)|x,y\in\partial F_N, x\ne y\}$.
We give $\dd$ the subspace topology from $\partial F_N\times \partial F_N$ and endow $\partial^2 F_N$ with the natural diagonal translation action of $F_N$ by homeomorphisms. The space $\partial^2 F_N$ also comes with a natural ``flip'' involution $\varpi:\partial^2F_N\to\partial^2 F_N$, $\varpi:(x,y)\mapsto (y,x)$. The boundary $\partial F_N$ is homeomorphic to the Cantor set, and $\partial^2 F_N$ is a locally compact totally disconnected but non-compact metrizable topological space.

\subsection{Basic notions}

\begin{defn}[Oriented currents and symmetrization]
An \emph{oriented geodesic current} on $F_N$ is a locally finite positive Borel measure on $\partial^2F_N$ that is invariant under the diagonal action of $F_N$; flip invariance is not required. We denote the space of oriented currents by $\Curr^+(F_N)$ and equip it with the weak-* topology. If $\alpha$ is an oriented current, define
\[
\operatorname{Sym}(\alpha):=\alpha+\varpi_*\alpha.
\]
Then $\operatorname{Sym}(\alpha)$ is flip-invariant.
\end{defn}

\begin{defn}
A \emph{geodesic current} on $F_N$ is a locally finite (i.e. finite on compact subsets) positive Borel measure $\nu$ on $\partial^2 F_N$ such that $\nu$ is $F_N$-invariant and flip-invariant. The set of all geodesic currents on $F_N$ is denoted $\CN$.
\end{defn}
The set $\CN$ is equipped with the weak-* topology, which makes $\CN$ locally compact. Any automorphism $\Phi\in\Aut(F_N)$ is a quasi-isometry of $F_N$ and hence extends to a homeomorphism, which we still denote by $\Phi:\partial F_N\to\partial F_N$. Diagonally extending this homeomorphism we also get a homeomorphism $\Phi:\partial^2 F_N\to\partial^2 F_N$. There is a natural left action of $\Aut(F_N)$ by homeomorphisms on $\CN$, where for $\Phi\in\Aut(F_N)$ and $\nu\in\CN$ we have $(\Phi\nu)(S)=\nu(\Phi^{-1}(S))$ for $S\subseteq \dd$. The subgroup ${\rm Inn}(F_N)\le \Aut(F_N)$ is contained in the kernel of this action, and therefore the action descends to the action of $\Out(F_N)$ on $\CN$. There is also a multiplication by a scalar action of $\R_{>0}$ on $\CN-\{0\}$, with the quotient space $\PCN=(\CN-\{0\})/\R_{>0}$, equipped with the quotient topology. The space $\PCN$ is compact, although infinite dimensional. For $0\ne \nu\in\CN$ we denote the $\R_{>0}$-equivalence class of $\nu$ by $[\nu]$. Thus $[\nu]=\{c\nu| c\in \R_{>0}\}$ and $[\nu]\in\PCN$. We call elements of $\PCN$ \emph{projectivized geodesic currents} on $F_N$.

Let $1\ne g\in F_N$. Then $g$ determines a pair of distinct ``poles'' $g^{-\infty}, g^{\infty}\in \partial F_N$, where $g^\infty=\lim_{n\to\infty} g^n$ and $g^{-\infty}=\lim_{n\to\infty} g^{-n}$ in $F_N\cup\partial F_N$. Thus  $(g^{-\infty},g^\infty)\in\dd$. For $h\in F_N$ we have $hg^\infty=(hgh^{-1})^\infty$, and we also have $g^{-\infty}=(g^{-1})^\infty$.

\begin{defn}[Counting and rational currents]
Let $1\ne g\in F_N$. The \emph{oriented counting current} of $g$ is
\[
\eta_g^+:=\sum_{h\in F_N/\langle g\rangle}
\delta_{h(g^{-\infty},g^\infty)}.
\]
The usual counting current is its symmetrization:
\[
\eta_g:=\operatorname{Sym}(\eta_g^+)
=\sum_{h\in F_N/\langle g\rangle}
\left(\delta_{h(g^{-\infty},g^\infty)}+
\delta_{h(g^\infty,g^{-\infty})}\right).
\]
We call currents of the form $c\eta_g$, where $c>0$, \emph{rational currents}.
\end{defn}
It is known that the set of all rational currents is a dense subset of $\CN$, and that for any $1\ne g\in F_N$ and any $u\in F_N$ we have $\eta_g=\eta_{ugu^{-1}}=\eta_{g^{-1}}$. Therefore we also denote $\eta_{[g]}:=\eta_g$ where $[g]$ is the conjugacy class of $g$ in $F_N$. Moreover, for $\phi\in\Aut(F_N)$ and $1\ne g\in F_N$, one has $\phi\eta_g=\eta_{\phi(g)}$.

For a fixed free basis $A$, the standard suspension construction gives a linear homeomorphism
\[
\mathcal I_A:\Curr^+(F_N)\longrightarrow
\mathcal M_{\mathrm{flow}}\bigl(T^1(T_A/F_N)\bigr)
\]
from oriented currents to geodesic-flow-invariant Radon measures on the oriented geodesic-flow space. It is characterized on oriented counting currents by
\[
\mathcal I_A(\eta_g^+)=||g||_A D_g,
\]
where $D_g$ is normalized arclength measure on the oriented closed geodesic determined by $g$. In particular,
\[
\mathcal I_A\left(\frac{\eta_g^+}{||g||_A}\right)=D_g.
\tag{4.1}
\]
This correspondence is also obtained by lifting a flow-invariant measure to $\partial^2F_N\times\R$ and disintegrating it as an oriented current times Lebesgue measure along the flow lines.

\subsection{Simplicial charts and weights}

We adopt the conventions of~\cite{DKT15} regarding graphs. All graphs are 1-cell complexes, where 0-cells are called vertices and 1-cells are called topological edges. Every topological edge is homeomorphic to an interval $(0,1)$ and thus admits exactly two orientations. An \emph{oriented edge} of a graph is a topological edge with a choice of an orientation. The same topological edge with the opposite orientation is denoted $e^{-1}$. The set of all oriented edges of a graph $\Delta$ is denoted $E\Delta$. We also denote by $V\Delta$ the set of all vertices of $\Delta$. Unless specified otherwise, by an edge of a graph we always mean an oriented edge. Every oriented edge $e\in E\Delta$ has an \emph{initial vertex} denoted $o(e)\in V\Delta$ and a \emph{terminal vertex} $t(e)\in V\Delta$. We also have $o(e^{-1})=t(e)$ and $t(e^{-1})=o(e)$.  An \emph{edge-path} $\gamma$ of length $n\ge 1$ in $\Delta$  is a sequence of edges $e_1,\dots, e_n$ such that $t(e_i)=o(e_{i+1})$. We also consider a vertex $v$ of $\Delta$ to be a path of length $0$. An edge-path $\gamma$ in $\Delta$ is \emph{reduced} if it does not contain subpaths of the form $e,e^{-1}$ where $e\in E\Delta$. We denote by $|\gamma|$ the length of an edge-path $\gamma$.

\begin{defn}[Simplicial chart]
Let $F_N$ be free of rank $N\ge 2$. A \emph{simplicial chart} on $F_N$ is a pair $(\Gamma, \kappa)$ where $\Gamma$ is a finite connected oriented graph with all vertices of degree $\ge 3$ and with the first Betti number $b(\Gamma)=N$, and where $\kappa: F_N\to \pi_1(\Gamma,x_0)$ is a group isomorphism (with $x_0\in V\Gamma$ some base vertex), called a \emph{marking}.
\end{defn}

When talking about simplicial charts, we usually suppress explicit mention of $\kappa$.
We equip $\Gamma$ and $T_0=\widetilde {(\Gamma,x_0)}$ with simplicial metrics, where every edge has length $1$.
In this setting we denote by $\Omega(\Gamma)$ the set of all semi-infinite reduced edge-paths $e_1,e_2,\dots, $ in $\Gamma$. For $n\ge 1$ denote by $\Omega_n(\Gamma)$ the set of all  reduced edge-paths $e_1,e_2,\dots, e_n$ of length $n$ in $\Gamma$. Also denote $\Omega_\ast=\cup_{n=1}^\infty \Omega_n(\Gamma)$.

If $A=\{a_1,\dots, a_N\}$ is a free basis of $F_N$, then the graph $R_A$, with a single vertex $x_0$ and with $N$ petal-edges marked $a_1,\dots, a_N$, is a simplicial chart on $F_N$. In this case the corresponding covering tree $T_A:=\widetilde R_A$ is exactly the Cayley tree of $F_N$ with respect to $A$. We refer to such simplicial chart $R_A$ as an \emph{$N$-rose}.

For a simplicial chart $\Gamma$, the marking $\kappa$ induces an $F_N$-equivariant quasi-isometry $F_N\to T_0$, which we use to identify $\partial F_N$ with $\partial T_0$. For $(x,y)\in \partial^2F_N$ denote by $\gamma_{x,y}$ the bi-infinite geodesic in $T_0$ from $x$ to $y$.  The group $F_N=\pi_1(\Gamma,x_0)$ acts on $T_0=\widetilde \Gamma$ by covering transformations, which is a free and isometric discrete action with $T_0/F_N=\Gamma$.

\begin{defn}[Cylinders and weights]
Let $\Gamma$ be a simplicial chart on $F_N$, with $T_0=\widetilde \Gamma$.

(1) For two distinct vertices $p,q\in T_0$ denote by $Cyl_\Gamma([p,q])$ the set of all $(x,y)\in\partial^2 F_N$ such that the geodesic $\gamma_{x,y}$, oriented from $x$ to $y$, traverses the segment $[p,q]$ from $p$ to $q$. The set $Cyl_\Gamma([p,q])\subseteq \dd$ is called the \emph{cylinder set} corresponding to $[p,q]$.

For any $g\in F_N$ and any $p,q\in VT_0, p\ne q$ we have $gCyl_\Gamma([p,q])=Cyl_\Gamma([gp,gq])$. The cylinder sets $Cyl_\Gamma([p,q])\subseteq \dd$ are compact and open, and the collection of all such cylinder sets forms a basis for the subspace topology on $\dd$ defined above.

(2) For a geodesic current $\eta\in\CN$ denote by $\langle v,\eta\rangle_\Gamma:=\eta\left( Cyl_\Gamma([p,q])\right)$ where $[p,q]$ is any lift of $v$ to $T_0$.  The number $0\le \langle v,\eta\rangle_\Gamma<\infty$ is called the \emph{weight} of $v$ in $\eta$ with respect to $\Gamma$.
\end{defn}

If $\Gamma=R_A$ is an $N$-rose, we use the subscript $A$ rather than $R_A$ for chart-related notations. E.g. $\langle v,\eta\rangle_A:=\langle v,\eta\rangle_{R_A}$, etc.

\begin{prop}\cite{Ka06}
Let $F_N$ be free of rank $N\ge 2$ and let $\Gamma$ be a simplicial chart on $F_N$. Then:

\begin{enumerate}
\item For $\eta, \eta_n\in \CN$, where $n=1,2,\dots$, we have $\lim_{n\to\infty} \eta_n=\eta$ in $\CN$ if and only if for every $v\in\Omega_\ast(\Gamma)$ we have
\[
\lim_{n\to\infty} \langle v,\eta_n\rangle_\Gamma=\langle v, \eta\rangle_\Gamma.
\]
\item Let $\eta\in \CN$. Then for every $k\ge 1$ and every $v\in\Omega_k(\Gamma)$ we have
\[
\langle v, \eta\rangle_\Gamma=\sum_{e\in E\Gamma \text{ with } ve\in \Omega_{k+1}(\Gamma)}\langle ve, \eta\rangle_\Gamma=\sum_{e'\in E\Gamma \text{ with } e'v\in \Omega_{k+1}(\Gamma)}\langle e'v, \eta\rangle_\Gamma. \tag{$\ddag$}
\]

Moreover, any system of finite nonnegative weights $a(v)$, $v\in\Omega_\ast(\Gamma)$, satisfying the two switch conditions
\[
a(v)=\sum_e a(ve)=\sum_{e'}a(e'v)
\]
(with the sums taken over reduced extensions) and the symmetry relations $a(v)=a(v^{-1})$ uniquely determines a current $\eta\in\CN$ with $\langle v,\eta\rangle_\Gamma=a(v)$.
\end{enumerate}
\end{prop}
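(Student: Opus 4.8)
The plan is to reduce everything to the fact that the cylinder sets $Cyl_\Gamma([p,q])$ are compact, open, permuted as a family by the $F_N$-action, and form a basis for the topology of $\partial^2 F_N$, together with the structural facts that $\partial^2 F_N$ is $\sigma$-compact, that the intersection of two cylinders is empty or again a cylinder, and that every compact open subset of $\partial^2 F_N$ is a finite disjoint union of cylinders.

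For part (1), the forward direction is immediate: if $\eta_n\to\eta$ in $\CN$ then, since $\mathbf 1_{Cyl_\Gamma([p,q])}$ is a continuous compactly supported function, $\eta_n(Cyl_\Gamma([p,q]))\to\eta(Cyl_\Gamma([p,q]))$; as these numbers depend only on the $\Gamma$-image $v$ of $[p,q]$ (by $F_N$-invariance of currents), this is exactly $\langle v,\eta_n\rangle_\Gamma\to\langle v,\eta\rangle_\Gamma$ for all $v\in\Omega_\ast(\Gamma)$. For the converse I would take an arbitrary continuous compactly supported $f$ on $\partial^2 F_N$, fix once and for all a compact open set $U\supseteq\mathrm{supp}(f)$ and write $U$ as a finite disjoint union of cylinders. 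Since $U$ is compact and totally disconnected, $f$ can be approximated uniformly on $U$ (and by $0$ off $U$) by a locally constant $g=\sum_j c_j\mathbf 1_{D_j}$ with the $D_j$ disjoint cylinders refining the decomposition of $U$. Then $\big|\int f\,d\eta_n-\int f\,d\eta\big|\le \|f-g\|_\infty\big(\eta_n(U)+\eta(U)\big)+\big|\sum_j c_j(\eta_n(D_j)-\eta(D_j))\big|$, where each $\eta_n(D_j)\to\eta(D_j)$ and $\eta_n(U)\to\eta(U)<\infty$ by hypothesis; since $U$ was fixed independently of the approximation, letting $\|f-g\|_\infty\to 0$ yields $\int f\,d\eta_n\to\int f\,d\eta$, i.e.\ $\eta_n\to\eta$.

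For part (2), to get $(\ddag)$ I would lift $v=e_1\cdots e_k$ to a reduced segment $[p,q]$ in $T_0$ and note that any bi-infinite geodesic containing $[p,q]$ leaves $q$ (on the side away from $p$) along a lift $\widetilde e$ of exactly one $e\in E\Gamma$ with $ve\in\Omega_{k+1}(\Gamma)$, and that such an $e$ has a unique lift at $q$; hence $Cyl_\Gamma([p,q])$ is the finite disjoint union of the $Cyl_\Gamma([p,t(\widetilde e)])$ over these $e$, the piece for $e$ having $\Gamma$-image $ve$. Applying $\eta$ gives the right-extension identity; extending past $p$ instead, and using $\varpi(Cyl_\Gamma([p,q]))=Cyl_\Gamma([q,p])$ together with flip-invariance of $\eta$, gives the left-extension identity, and the two sums coincide. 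For the ``moreover'': given a finite nonnegative weight system $\{w(v)\}_{v\in\Omega_\ast(\Gamma)}$ satisfying $(\ddag)$ and the symmetry $w(v)=w(v^{-1})$, define $\eta_0(Cyl_\Gamma([p,q])):=w(v)$ for $v$ the $\Gamma$-image of $[p,q]$; this is well defined precisely because the right-hand side depends only on $v$. The equations $(\ddag)$, iterated, give the finite additivity of $\eta_0$ on the semiring of cylinders (any two cylinder decompositions of a compact open set have a common refinement obtained by extending segments, and $(\ddag)$ makes the weights match), so $\eta_0$ extends additively to the ring of compact open sets; continuity at $\varnothing$ is automatic because a nested sequence of nonempty compact open sets has nonempty intersection. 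By Carath\'eodory's extension theorem $\eta_0$ extends to a Borel measure $\eta$, which is locally finite (compact sets lie in finitely many cylinders of finite weight), $F_N$-invariant (the $F_N$-action on segments of $T_0$ covers the identity on $\Gamma$, and $w$ is $\Gamma$-defined), and flip-invariant (as $w(v)=w(v^{-1})$ and $\varpi$ sends $Cyl_\Gamma([p,q])$ to $Cyl_\Gamma([q,p])$); thus $\eta\in\CN$ realizes the given weights. Uniqueness follows since $\partial^2 F_N$ is $\sigma$-compact, so currents are $\sigma$-finite and are determined by their values on the generating ring of compact open sets.

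The step I expect to require the most care is the converse in part (1): a priori, convergence of all weights does not preclude mass of the $\eta_n$ escaping to infinity. The argument above circumvents this by fixing the compact open neighborhood $U$ of $\mathrm{supp}(f)$ \emph{first} and writing it as a finite disjoint union of cylinders, so that $\eta_n(U)$ is a finite sum of convergent weights, hence bounded in $n$; no separate tightness hypothesis is needed. The measure-extension in part (2) is then routine Kolmogorov-type bookkeeping, the only substantive point being that $(\ddag)$ is precisely the statement that $\eta_0$ is finitely additive on cylinders.
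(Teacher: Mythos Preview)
The paper does not supply its own proof of this proposition; it is quoted from \cite{Ka06} as background, so there is nothing to compare against directly.

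Your argument is correct and is essentially the standard one. For part (1), the forward direction is immediate because cylinder sets are compact and open, so their indicators lie in $C_c(\partial^2 F_N)$; your converse is the right idea, and the care you take in fixing the compact open $U\supseteq\mathrm{supp}(f)$ \emph{before} approximating is exactly what is needed to keep $\eta_n(U)$ uniformly bounded (as a finite sum of convergent weights) and avoid any spurious tightness issue. For part (2), the cylinder-partition argument for $(\ddag)$ and the Carath\'eodory/Kolmogorov-type extension are the expected route; your remark that compactness of the cylinders gives continuity at $\varnothing$ for free is the key point that makes the extension go through.

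One small observation: the statement as printed in the paper has a lacuna (``satisfying'' with no stated object). You correctly supply both the switch condition $(\ddag)$ and the symmetry $w(v)=w(v^{-1})$; the latter is indeed required for flip-invariance of the resulting current and is consistent with how the paper invokes the result later, where the weights are explicitly symmetrized as $\mu_0[k](v)+\mu_0[k](v^{-1})$.
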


Condition $(\ddag)$ is often called the \emph{switch condition} for $\Gamma$.

For $v\in \Omega_\ast(\Gamma)$ and a nondegenerate closed reduced and cyclically reduced edge-path $w$ in $\Gamma$, denote by $\langle v,w\rangle_\Gamma$ the number of ways in which $v$ can be read, reading forwards or backwards, in a circle of length $|w|$ labelled by $w$. The number $\langle v,w\rangle_\Gamma\ge 0$ is called the \emph{number of occurrences} of $v$ in $w$.
A key useful fact that follows from the definitions is:

\begin{lem}
Let $F_N$ be free of rank $N\ge 2$ and let $\Gamma$ be a simplicial chart on $F_N$. Let $v\in \Omega_\ast(\Gamma)$ and let $w$ be a nondegenerate closed reduced and cyclically reduced edge-path in $\Gamma$. Then $\langle v,w\rangle_\Gamma=\langle v,\eta_w\rangle_\Gamma$.
\end{lem}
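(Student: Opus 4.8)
The plan is to unwind both sides in the universal cover $T_0=\widetilde\Gamma$ and to exhibit an explicit bijection. Write $\pi\colon T_0\to\Gamma$ for the covering map and fix once and for all a lift $[p,q]$ of the reduced edge-path $v$, so that $p,q\in VT_0$, the $T_0$-geodesic segment from $p$ to $q$ coincides with this lift, and $\pi([p,q])=v$. Let $1\ne g\in F_N=\pi_1(\Gamma,x_0)$ be an element whose conjugacy class $[g]=[w]$ is represented by the closed loop $w$, so that $\eta_w=\eta_g$. Since $w$ is reduced and cyclically reduced, $g$ acts on $T_0$ as a hyperbolic isometry of translation length $|w|$, and its axis $L_g:=\gamma_{g^{-\infty},g^{\infty}}$ is the bi-infinite reduced edge-path that $\pi$ maps onto the closed loop $w$, reading $w$ forwards, periodically with period $|w|$. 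For $h\in F_N$ one has $hL_g=\gamma_{hg^{-\infty},hg^{\infty}}$, while $\gamma_{hg^{\infty},hg^{-\infty}}$ is the same line carrying the opposite orientation. Unwinding the definition of $\eta_g$ and of the cylinder weight gives
\[
\langle v,\eta_w\rangle_\Gamma=\eta_g\big(Cyl_\Gamma([p,q])\big)=N_++N_-,
\]
where $N_+$ (respectively $N_-$) is the number of cosets $h\langle g\rangle\in F_N/\langle g\rangle$ for which $[p,q]$ is a coherently oriented subsegment of $\gamma_{hg^{-\infty},hg^{\infty}}$ (respectively of $\gamma_{hg^{\infty},hg^{-\infty}}$).

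Next I would identify $N_+$ with the number of forward occurrences of $v$ in the cyclic word $w$. Since each $h\in F_N$ acts on $T_0$ as an orientation-preserving isometry carrying $L_g$ to $hL_g$, the condition counted by $N_+$ for a coset $h\langle g\rangle$ is equivalent to: $h^{-1}[p,q]$ is a coherently oriented subsegment of $L_g$. As $\pi\circ h^{-1}=\pi$, such a segment projects to $v$, hence records a vertex of the circle of length $|w|$ labelled by $w$ at which, reading $|v|$ edges forwards, one spells $v$. Conversely, from a forward occurrence of $v$ in cyclic $w$, pick any vertex of $L_g$ lying over the corresponding vertex of $w$ and read $|v|$ edges forwards along $L_g$; this yields a subsegment $\sigma\subseteq L_g$ with $\pi(\sigma)=v$, and by uniqueness of lifts of reduced edge-paths there is a unique $h\in F_N$ with $h[p,q]=\sigma$, so that $[p,q]\subseteq h^{-1}L_g$. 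Replacing the chosen lift of that vertex by another one multiplies $\sigma$, hence $h$, on the left by a power of $g$, so the coset $h^{-1}\langle g\rangle$ is well defined; and the two constructions are mutually inverse. Hence $N_+$ equals the number of forward occurrences of $v$ in $w$. Running the identical argument with $L_g$ carrying the reversed orientation $\gamma_{g^{\infty},g^{-\infty}}$, which $\pi$ maps onto $w$ read backwards, shows that $N_-$ equals the number of backward occurrences of $v$ in $w$. Adding, $\langle v,\eta_w\rangle_\Gamma=N_++N_-=\langle v,w\rangle_\Gamma$, as claimed.

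The argument is essentially bookkeeping in the covering $T_0\to\Gamma$, and I expect the only points needing care to be the following. First, one must correctly match the two orientations on each axis with the ``reading forwards or backwards'' in the definition of $\langle v,w\rangle_\Gamma$; this is precisely what the flip term $\delta_{h(g^{\infty},g^{-\infty})}$ in the definition of $\eta_w$ is there to supply, since for general $w$ the conjugacy classes $[w]$ and $[w^{-1}]$ are distinct. Second, one must check that the occurrence-to-coset assignment is well defined and bijective, in particular that it neither collapses nor multiplies counts when $w$ is a proper power as a cyclic word; the computation above, that changing the chosen lift of a vertex alters $h$ only by a left power of $g$, is exactly what resolves this.
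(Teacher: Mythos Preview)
Your argument is correct: unwinding the definition of $\eta_g$ on the cylinder over a fixed lift of $v$ and setting up the bijection between cosets $h\langle g\rangle$ contributing to the measure and occurrences of $v^{\pm 1}$ along the axis modulo the $\langle g\rangle$-action is exactly the content of the lemma. The paper itself offers no proof beyond the remark that the statement ``follows from the definitions'' (it is closed with a bare $\qed$), so your write-up is a faithful and complete expansion of what the paper leaves implicit; there is no alternative route to compare against.
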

\hfill $\qed$

\begin{defn}[Uniform current]\label{d:uc}
Let $F_N=F(A)$ be free of rank $N\ge 2$ with a free basis $A$. The \emph{uniform current} $\nu_A\in\CN$ corresponding to $A$ is the current given by the weights $\langle v,\nu_A\rangle_A=\frac{1}{N(2N-1)^{k-1}}$ for every $1\ne v\in F_N$ with $|v|_A=k\ge 1$.
\end{defn}

For a current $\eta\in\CN$ the \emph{support} $\supp(\eta)\subseteq \dd$ is
\[
\supp(\eta):=\dd\setminus \bigcup \{U\subseteq \dd| U \text{ is open and } \eta(U)=0\}.
\]
Thus $\supp(\eta)$ is a closed $F_N$-invariant subset of $\dd$.

\begin{rem}\label{r:sup}
Let $\Gamma$ be a simplicial chart on $F_N$. If $\eta\in\CN$ and $(x,y)\in\dd$ then $(x,y)\in\supp(\eta)$ if and only if every finite nondegenerate edge subpath of $\gamma_{x,y}$ projects to a reduced edge-path $v$ in $\Gamma$ with $\langle v,\eta\rangle_\Gamma>0$.
\end{rem}

\subsection{Geometric intersection form}

We refer the reader to \cite{B,FM,KL10,V15} for the background and basic info regarding the Outer space, and only recall a few facts and definitions here.
Denote by $\cvn$ the (unprojectivized) Culler-Vogtmann Outer space for $F_N$. Elements of $\cvn$ are equivariant $F_N$-isometry classes of free and discrete minimal isometric actions of $F_N$ on $\R$-trees. In particular, if $\Gamma$ is a simplicial chart on $F_N$ then $T_0=\widetilde \Gamma$ defines a point of $\cvn$.
There is a natural ``axes'' topology on $\cvn$ and a (right) action of $\Out(F_N)$ on $\cvn$ by homeomorphisms.  Moreover, the closure $\cvnbar$ of $\cvn$ in the axes topology is known to consist of all minimal nontrivial ``very small'' isometric actions on $F_N$ on $\R$-trees (again considered up to $F_N$-equivariant isometry), and the action of $\Out(F_N)$ extends to $\cvnbar$. For $T\in\cvnbar$ and $g\in F_N$ denote by $||g||_T$ the \emph{translation length} of $g$ in $T$, that is $||g||_T=\inf_{x\in T} d_T(x,gx)$.

A key result of Kapovich and Lustig~\cite{KL09} is:

\begin{prop}\label{p:int}
Let $F_N$ be free of finite rank $N\ge 2$. Then there exists a continuous \emph{geometric intersection form}
\[
\langle -\, , \, - \rangle:\cvnbar\times \CN\to \R_{\ge 0}
\]
satisfying the following properties:
\begin{enumerate}
\item The map $\langle -\, , \, - \rangle$ is $\R_{\ge 0}$-homogeneous with respect to the first argument and $\R_{\ge 0}$-linear with respect to the second argument.
\item For every $\phi\in\Out(F_N)$, every $T\in\cvnbar$ and every $\eta\in\CN$ we have
\[
\langle T,\phi\eta\rangle=\langle T\phi,\eta\rangle.
\]
\item For every $1\ne g\in F_N$ and every $T\in\cvnbar$ we have $\langle T,\eta_g\rangle=||g||_T$.
\end{enumerate}
\end{prop}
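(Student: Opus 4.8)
This is the Kapovich--Lustig intersection form \cite{KL09}; here is the plan. One first defines the pairing on the open part $\cvn\times\CN$ by an explicit simplicial formula, then extends it by continuity to all of $\cvnbar\times\CN$. For $T\in\cvn$ I would choose a marked metric graph $(\Gamma,\ell)$ with $\widetilde{(\Gamma,x_0)}=T$, so that $\Gamma$ is a simplicial chart on $F_N$ and $\ell$ assigns a positive length to each topological edge, and set
\[
\langle T,\nu\rangle\ :=\ \tfrac12\sum_{e}\ell(e)\,\langle e,\nu\rangle_\Gamma,
\]
the sum being over the topological edges $e$ of $\Gamma$ (note $\langle e,\nu\rangle_\Gamma=\langle e^{-1},\nu\rangle_\Gamma$, so this is well defined), with the constant normalized so that property (3) will hold. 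This finite sum is $\R_{\ge 0}$-linear in $\nu$ and, since it involves only length-one weights, weak-$*$ continuous in $\nu$ by the characterization of weak-$*$ convergence in terms of convergence of weights. I would first check independence of the chosen metric graph representing $T$: any two such charts differ by subdivisions and a relabeling, and the switch condition $(\ddag)$ shows that subdividing an edge of length $\ell(e)$ into $e_1e_2$ with $\ell(e_1)+\ell(e_2)=\ell(e)$ leaves the sum unchanged. Property (1) (homogeneity in $T$ by rescaling lengths, together with the linearity in $\nu$ noted above) is then immediate; property (2) follows from $(\Phi\nu)(S)=\nu(\Phi^{-1}S)$ together with the fact that precomposing the marking by $\Phi$ represents $T\Phi$; and property (3), $\langle T,\eta_g\rangle=\|g\|_T$, reduces to the computation that $\|g\|_T$ equals the $\ell$-length of the immersed loop representing $[g]$ in $\Gamma$, which by the lemma relating occurrence counts to weights is exactly $\tfrac12\sum_e\ell(e)\langle e,\eta_g\rangle_\Gamma$.

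Next I would establish joint continuity on $\cvn\times\CN$. Along a path in $\cvn$ the combinatorial type of $\Gamma$ is locally constant except where an edge length tends to $0$, and there the vanishing coefficient $\ell(e)$ kills the corresponding weight (which stays uniformly bounded by the switch condition); combined with continuity in $\nu$ that is uniform on weak-$*$ compact sets of currents, this gives continuity of $\langle-,-\rangle$ on $\cvn\times\CN$.

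The extension to $\cvnbar$ is the substantive part. Since simplicial points are dense in $\cvnbar$, for $T\in\cvnbar$ one picks $T_n\in\cvn$ with $T_n\to T$ and \emph{defines} $\langle T,\nu\rangle:=\lim_n\langle T_n,\nu\rangle$; the task is to show the limit exists, is independent of the sequence, and yields a continuous function on $\cvnbar\times\CN$. The mechanism is a multiplicative Lipschitz comparison: for $T'\in\cvn$ and $S\in\cvnbar$, with stretching constant $\Lambda(T',S)=\sup_{1\ne g}\|g\|_{T'}/\|g\|_S$, one upgrades the obvious inequality $\|g\|_{T'}\le\Lambda(T',S)\|g\|_S$ to $\langle T',\nu\rangle\le\Lambda(T',S)\langle S,\nu\rangle$ for \emph{all} $\nu\in\CN$, using property (3) on rational currents, density of rational currents in $\CN$, and continuity in $\nu$. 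Since $\Lambda(T_n,T_m)\to1$ as $n,m\to\infty$ when $T_n,T_m\to T$ --- a property of the Lipschitz metric on $\cvnbar$ for minimal very small trees --- the sequences $(\langle T_n,\nu\rangle)_n$ are Cauchy, uniformly for $\nu$ in weak-$*$ compact sets; this produces the desired continuous extension, and properties (1)--(3) pass to the limit.

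The hard part is entirely in this last step: proving the comparison $\langle T',\nu\rangle\le\Lambda(T',S)\langle S,\nu\rangle$ for \emph{arbitrary} currents rather than just rational ones, and showing that $\Lambda(T_n,T_m)\to1$ along sequences converging to a boundary point of $\cvnbar$, where the limit tree may have dense orbits and carry no simplicial structure at all. This rests on the fine analysis of the Lipschitz metric on $\cvnbar$ and on a precise understanding of how a geodesic current ``sees'' the metric of a very small $\R$-tree, which is the technical core of \cite{KL09}.
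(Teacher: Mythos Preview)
The paper itself gives no proof of this proposition; it is quoted verbatim as a result of Kapovich--Lustig \cite{KL09}. Your outline on the open part $\cvn\times\CN$ is accurate and matches what is done in \cite{KL09}: the simplicial length formula, the check of (1)--(3) on $\cvn$, and joint continuity there all go through essentially as you describe.

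The extension step, however, contains a genuine gap. The assertion that $\Lambda(T_n,T_m)\to 1$ whenever $T_n,T_m\to T\in\cvnbar$ is false. The (asymmetric) Lipschitz metric on $\cvn$ is proper after projectivizing, so sequences escaping to the boundary have Lipschitz distances tending to infinity, not to zero; more concretely, if $T\in\partial\cvn$ has some $g\in F_N$ with $\|g\|_T=0$, then $\|g\|_{T_n}/\|g\|_{T_m}$ is uncontrolled as $T_n,T_m\to T$, and $\Lambda(T_n,T_m)$ need not approach $1$. So the Cauchy argument you propose does not work. The actual mechanism in \cite{KL09} is different: rather than comparing nearby simplicial trees via Lipschitz constants, one fixes a basepoint $T_0\in\cvn$ and, for an arbitrary $T\in\cvnbar$, uses an $F_N$-equivariant map $T_0\to T$ to produce an explicit ``Riemann sum'' approximation of $\langle T,\nu\rangle$ by finite sums of $T$-translation lengths weighted by $\nu$-weights computed in the chart $T_0$. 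The delicate point is an equicontinuity estimate (uniform over $T$ in compacta of $\cvnbar$) showing these Riemann sums converge and depend continuously on $(T,\nu)$; this replaces your Lipschitz comparison and is where the real work in \cite{KL09} lies.
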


In view of the above proposition, for $T\in\cvnbar$ and $\eta\in\CN$ we denote $||\eta||_T=\langle T,\eta\rangle$. If $A$ is a free basis of $F_N$ and $T_A$ is the Cayley tree of $F_N$ with respect to the free basis $A$, then for $\eta\in \CN$ we denote $||\eta||_A:=||\eta||_{T_A}=\langle T_A,\eta\rangle$.

For every $T\in\cvnbar$ there is an associated \emph{dual lamination} $L(T)\subseteq \dd$, which is a certain closed $F_N$-invariant and flip-invariant subset of $\dd$ recording the information about sequences of elements of $F_N$ with translation length in $T$ converging to $0$. We refer the reader to \cite{KL10} for the precise definition of $L(T)$ and additional details.

We need the following key result of \cite{KL10}:

\begin{prop}\label{p:KL100}
Let $T\in\cvnbar$ and $\eta\in\CN$. Then $||\eta||_T=0$ if and only if $\supp(\eta)\subseteq L(T)$.
\end{prop}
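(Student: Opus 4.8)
The plan is to combine the combinatorial description of the dual lamination with the finite-approximation construction of the intersection form from~\cite{KL09,KL10}. I would fix the rose $R_A$ as a simplicial chart and use the description $L(T)=\bigcap_{\epsilon>0}\overline{\Omega_\epsilon(T)}$, where $\Omega_\epsilon(T)=\{(g^{-\infty},g^\infty)\mid 1\ne g\in F_N,\ ||g||_T\le\epsilon\,||g||_A\}\subseteq\dd$; the sets $\overline{\Omega_\epsilon(T)}$ are compact and decrease as $\epsilon\downarrow 0$, and the definition is independent of $A$. I would also use that, by~\cite{KL09}, $||\eta||_T=\langle T,\eta\rangle$ is obtained as a limit of finite weighted sums governed by the weights $\langle v,\eta\rangle_A$ and the $T$-lengths $\ell_T(v)$ of finite reduced paths $v$ in $R_A$, namely (up to a fixed normalizing constant)
\[
||\eta||_T=\lim_{n\to\infty}\frac1n\sum_{v\in\Omega_n(R_A)}\langle v,\eta\rangle_A\,\ell_T(v),
\]
where $\ell_T$ is \emph{quasi-additive}, i.e.\ $|\ell_T(v_1v_2)-\ell_T(v_1)-\ell_T(v_2)|\le 2C$ for a bounded-cancellation constant $C=C(T,A)$, and where, by the switch condition $(\ddag)$, the total mass $\sum_{|v|_A=n}\langle v,\eta\rangle_A$ is a fixed multiple of $||\eta||_A$ independent of $n$. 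Together with $\langle T_A,-\rangle=||\cdot||_A$ (where $T_A=\widetilde{R_A}$) and the continuity and bilinearity of the intersection form (Proposition~\ref{p:int}), this is the machinery I would use for both implications.

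For ``$\supp(\eta)\subseteq L(T)\ \Rightarrow\ ||\eta||_T=0$'' I would argue as follows. Since $L(T)\subseteq\overline{\Omega_\epsilon(T)}$ for every $\epsilon>0$ and counting currents are dense in $\CN$, the first step is to show that a current supported in $\overline{\Omega_\epsilon(T)}$ is a weak-$*$ limit of convex combinations $\sum_i c_i\eta_{g_i}$ with $(g_i^{-\infty},g_i^\infty)\in\Omega_\epsilon(T)$, i.e.\ with $||g_i||_T\le\epsilon\,||g_i||_A$; this is a realization/approximation argument, replacing long pieces of leaves of $\supp(\eta)$ by nearby periodic lines coming from $\Omega_\epsilon(T)$. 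Granting this, Proposition~\ref{p:int} gives $||\eta||_T=\lim_i\sum_i c_i||g_i||_T\le\epsilon\,\lim_i\sum_i c_i||g_i||_A=\epsilon\,||\eta||_A$, since $\sum_i c_i||g_i||_A=\langle T_A,\sum_i c_i\eta_{g_i}\rangle\to||\eta||_A$. As $\epsilon>0$ is arbitrary, $||\eta||_T=0$. (Equivalently, one can phrase this combinatorially: every $v$ with $\langle v,\eta\rangle_A>0$ lies in the laminary language of $L(T)$, one shows such $v$ have $\ell_T(v)=o(|v|_A)$, and the displayed formula then forces $||\eta||_T=0$.)

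For the converse I would prove the contrapositive: assuming $\supp(\eta)\not\subseteq L(T)$, produce $||\eta||_T>0$. Pick $(x_0,y_0)\in\supp(\eta)\setminus L(T)$. Since $\{(x_0,y_0)\}$ misses the decreasing intersection of compacta $\bigcap_{\epsilon}\overline{\Omega_\epsilon(T)}$, there are $\epsilon_0>0$ and a cylinder $Cyl_A(w)\ni(x_0,y_0)$ with $Cyl_A(w)\cap\overline{\Omega_{\epsilon_0}(T)}=\varnothing$; equivalently, every cyclically reduced word containing $w$ as a cyclic subword has translation length in $T$ at least $\epsilon_0$ times its $A$-length. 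Shrinking the cylinder, I would fix a long subword $w'$ of the leaf through $(x_0,y_0)$, long enough that the definite cost $\epsilon_0|w'|_A$ it forces dominates the cancellation constant $C$; using quasi-additivity of $\ell_T$ and the homogeneity $||g^k||_T=k\,||g||_T$ of translation length, one gets a lower bound $\ell_T(v)\ge c\,N_{w'}(v)$ with $c>0$, valid for every reduced path $v$, where $N_{w'}(v)$ counts occurrences of $w'$ in $v$. Finally, since $(x_0,y_0)\in\supp(\eta)$ and $w'$ is a subpath of its leaf, $\langle w',\eta\rangle_A=\eta(Cyl_A(w'))>0$, and by $(\ddag)$ the $\eta$-average of $N_{w'}$ over length-$n$ paths is a fixed positive multiple of $n\langle w',\eta\rangle_A$. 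Substituting $\ell_T(v)\ge c\,N_{w'}(v)$ into the approximation formula then forces $||\eta||_T\ge c'\langle w',\eta\rangle_A>0$.

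The main obstacle is the first step of the first implication: that a current supported in $\overline{\Omega_\epsilon(T)}$ is approximable by counting currents of elements of $\Omega_\epsilon(T)$ (equivalently, that leaves of $L(T)$ have sublinear $T$-length density). This is where the structure theory of the dual lamination enters — one needs compactness and $F_N$-invariance of the leaf space of $L(T)$, the characterization of $L(T)$ via the limit map to the metric completion of $T$, and bounded cancellation, in order to reduce to a uniform statement on a compact leaf space. A secondary but essential technical point, used throughout, is to extract from~\cite{KL09} a version of the finite-approximation formula for $\langle T,-\rangle$ explicit enough (normalization, quasi-additivity constant, validity for all $T\in\cvnbar$ and not only simplicial $T$) that the block-chopping and density estimates above become rigorous.
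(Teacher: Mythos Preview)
The paper does not prove Proposition~\ref{p:KL100}; it is quoted as a black box from \cite{KL10} (where it is Theorem~1.1), and the paper explicitly defers even the precise definition of $L(T)$ to that reference. There is therefore no proof in this paper to compare your sketch against.

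Your outline is broadly in the spirit of the argument in \cite{KL10}, and you correctly locate the hard direction as $\supp(\eta)\subseteq L(T)\Rightarrow ||\eta||_T=0$ and the hard step within it as the approximation of currents supported in the lamination by rational currents coming from short elements. One technical caution: you define $\Omega_\epsilon(T)$ via the \emph{relative} condition $||g||_T\le\epsilon\,||g||_A$, whereas the standard definition (Coulbois--Hilion--Lustig, followed in \cite{KL10}) uses the absolute condition $||g||_T<\epsilon$. Your contrapositive step for the converse --- deducing from $Cyl_A(w)\cap\overline{\Omega_{\epsilon_0}(T)}=\varnothing$ that every cyclically reduced word containing $w$ has $T$-length at least $\epsilon_0$ times its $A$-length --- relies essentially on the relative formulation, so you would need either to verify that the two descriptions give the same $L(T)$ or to rework that step with the absolute one.
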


\section{Filling geodesic currents}

Whenever edge-paths occur in this section, $\Gamma$ denotes a fixed simplicial chart on $F_N$.

\begin{defn} Let $F_N$ be free of rank $N\ge 2$.
\begin{enumerate}
\item An element $g\in F_N$ is \emph{filling} in $F_N$ if for every $T\in\cvnbar$ we have $||g||_T>0$.
\item A current $\eta\in\CN$ is \emph{filling} in $F_N$ if for every $T\in\cvnbar$ we have $||\eta||_T>0$.
\end{enumerate}
\end{defn}
Thus an element $1\ne g\in F_N$ is filling if and only if $\eta_g$ is a filling.

One of the main results of \cite{KL10} is:

\begin{prop}\cite[Corollary~1.6]{KL10}\label{p:fs}
Let $\eta\in\CN$ be such that $\supp(\eta)=\dd$. Then $\eta$ is filling in $F_N$.
\end{prop}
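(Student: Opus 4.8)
The plan is to reduce the statement, via the vanishing criterion of Proposition~\ref{p:KL100}, to the purely geometric fact that the dual lamination $L(T)$ of any point $T\in\cvnbar$ is a \emph{proper} subset of $\dd$. Concretely, I would fix an arbitrary $T\in\cvnbar$ and observe that, by Proposition~\ref{p:KL100}, having $||\eta||_T=0$ would force $\supp(\eta)\subseteq L(T)$; since $\supp(\eta)=\dd$ by hypothesis, it then suffices to show $L(T)\neq\dd$. Granting that, $\supp(\eta)=\dd\not\subseteq L(T)$, so $||\eta||_T\neq 0$, and since the intersection form takes values in $\R_{\ge 0}$ this gives $||\eta||_T>0$; as $T$ was arbitrary, $\eta$ is filling.

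The core step, then, is to prove $L(T)\neq\dd$, which I would do by contradiction. Assume $L(T)=\dd$. Since $T\in\cvnbar$ is a minimal nontrivial (very small) isometric action of $F_N$ on an $\R$-tree, $T$ is not a point; by the standard Serre-type argument (a finitely generated group all of whose elements act elliptically on an $\R$-tree fixes a point, using the Helly property of subtrees and the fact that $\mathrm{Fix}(g)\cap\mathrm{Fix}(h)=\varnothing$ forces $gh$ to be hyperbolic), $F_N$ contains an element acting hyperbolically on $T$, i.e.\ there is $1\neq g\in F_N$ with $||g||_T>0$. Consider the counting current $\eta_g\in\CN$. Trivially $\supp(\eta_g)\subseteq\dd=L(T)$, so Proposition~\ref{p:KL100} yields $||\eta_g||_T=0$; but Proposition~\ref{p:int}(3) gives $||\eta_g||_T=\langle T,\eta_g\rangle=||g||_T>0$, a contradiction. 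Hence $L(T)\neq\dd$, completing the argument.

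I do not expect a serious obstacle here: once Proposition~\ref{p:KL100} and Proposition~\ref{p:int} are available the proof is essentially two lines. The only real content is the observation that the dual lamination of a point of $\cvnbar$ is never all of $\dd$, and the cleanest route to it is exactly the one above — translate the containment $\supp(\eta_g)\subseteq L(T)$ into the equality $||g||_T=0$ via Propositions~\ref{p:KL100} and~\ref{p:int}, and contradict the existence of a $T$-hyperbolic element in a nontrivial minimal action. (One could instead argue directly that $L(T)$ contains no axis-pair $(g^{-\infty},g^{\infty})$ of a $T$-hyperbolic $g$, but routing through $\eta_g$ reuses precisely the machinery already set up in the excerpt.)
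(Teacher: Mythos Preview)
The paper does not supply its own proof of this proposition; it is quoted as \cite[Corollary~1.6]{KL10} and left as a citation. Your argument is correct and is exactly the natural deduction from Proposition~\ref{p:KL100} (which is \cite[Theorem~1.1]{KL10}): once one knows that $||\eta||_T=0$ forces $\supp(\eta)\subseteq L(T)$, the only remaining point is that $L(T)\subsetneq\dd$, and your contradiction via a $T$-hyperbolic element $g$ and $\eta_g$ is the standard way to see this. Note that the paper itself deploys precisely this mechanism in the proof of Lemma~\ref{lem:f}, so your approach is fully in line with how the author reasons elsewhere.
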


We will sometimes say that a current $\eta\in \CN$ \emph{has full support} if $\supp(\eta)=\dd$.

Remark~\ref{r:sup} directly implies:

\begin{prop}
Let $\Gamma$ be a simplicial chart on $F_N$.

Then $\eta\in\CN$ has full support if and only if for every nondegenerate edge-path $v$ in $\Gamma$ we have $\langle v,\eta\rangle_\Gamma>0$.
\end{prop}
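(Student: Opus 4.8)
The plan is to deduce both implications directly from Remark~\ref{r:sup}, which says that $(x,y)\in\supp(\eta)$ precisely when every finite nondegenerate edge subpath of $\gamma_{x,y}$ projects to a reduced edge-path of positive $\Gamma$-weight.

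First I would handle the ``if'' direction. Assume $\langle v,\eta\rangle_\Gamma>0$ for every nondegenerate reduced edge-path $v$ in $\Gamma$ (i.e. every $v\in\Omega_\ast(\Gamma)$), and let $(x,y)\in\dd$ be arbitrary. Any finite nondegenerate edge subpath of $\gamma_{x,y}$ projects to some $v\in\Omega_\ast(\Gamma)$, which by hypothesis satisfies $\langle v,\eta\rangle_\Gamma>0$; hence Remark~\ref{r:sup} yields $(x,y)\in\supp(\eta)$. As $(x,y)$ was arbitrary, $\supp(\eta)=\dd$, so $\eta$ has full support.

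Next I would handle the ``only if'' direction. Assume $\supp(\eta)=\dd$ and let $v\in\Omega_\ast(\Gamma)$. Choose a lift $[p,q]$ of $v$ to $T_0=\widetilde\Gamma$; since $v$ is nondegenerate, $p\ne q$. Because every vertex of $\Gamma$ has degree $\ge 3$, the tree $T_0$ also has all vertices of degree $\ge 3$, so the geodesic segment $[p,q]$ extends to a bi-infinite reduced edge-path in $T_0$; being reduced in a tree, this is a bi-infinite geodesic line with two distinct endpoints $x,y\in\partial T_0=\partial F_N$, and $\gamma_{x,y}$ contains $[p,q]$ as a subsegment. Since $(x,y)\in\dd=\supp(\eta)$, applying Remark~\ref{r:sup} to the subpath $[p,q]$ of $\gamma_{x,y}$ and noting that $[p,q]$ projects to $v$, we conclude $\langle v,\eta\rangle_\Gamma>0$.

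The only mildly non-routine ingredient is the extension of $[p,q]$ to a bi-infinite geodesic line in $T_0$, which uses only that $T_0$ is a tree with no vertices of degree $\le 2$: at either end of a finite reduced edge-path there is always a continuing edge that is not the inverse of the last one, and iterating produces a bi-infinite reduced edge-path whose two ends converge to distinct boundary points. I do not expect any genuine obstacle here; the statement is essentially a reformulation of Remark~\ref{r:sup} adapted from points of $\dd$ to finite edge-paths in $\Gamma$.
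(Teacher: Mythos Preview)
Your proof is correct and follows exactly the approach the paper indicates: the paper simply states that the proposition is a direct consequence of Remark~\ref{r:sup} without spelling out the details, and your argument is precisely the natural unpacking of that remark in both directions. The extension of $[p,q]$ to a bi-infinite geodesic using that all vertices of $T_0$ have degree $\ge 3$ is the only point requiring any comment, and you handle it correctly.
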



\begin{lem}\label{lem:w}
Let $0\ne \nu\in\CN$.
Let $w$ be a nondegenerate closed reduced and cyclically reduced edge-path in $\Gamma$ such that for every $n\ge 1$ we have $\langle w^n,\nu\rangle_\Gamma>0$. Then  $\supp(\eta_w)\subseteq \supp(\nu)$.
\end{lem}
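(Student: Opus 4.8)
The plan is to reduce the statement, via Remark~\ref{r:sup} applied to \emph{both} $\eta_w$ and $\nu$, to a purely combinatorial claim about path-weights, and then to prove that claim using the switch condition $(\ddag)$. Concretely, let $(x,y)\in\supp(\eta_w)$. By Remark~\ref{r:sup} (applied to $\eta_w$), every finite nondegenerate edge subpath of $\gamma_{x,y}$ projects to a reduced edge-path $v$ in $\Gamma$ with $\langle v,\eta_w\rangle_\Gamma>0$; and by Remark~\ref{r:sup} (applied to $\nu$), in order to conclude $(x,y)\in\supp(\nu)$ it suffices to know that each such $v$ also satisfies $\langle v,\nu\rangle_\Gamma>0$. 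Hence the lemma follows once we prove: for every $v\in\Omega_\ast(\Gamma)$ with $\langle v,\eta_w\rangle_\Gamma>0$ we have $\langle v,\nu\rangle_\Gamma>0$.

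First I would record the elementary monotonicity of weights: if $v,v'\in\Omega_\ast(\Gamma)$ and $v$ occurs as a subpath of $v'$, then $\langle v,\nu\rangle_\Gamma\ge\langle v',\nu\rangle_\Gamma$. This is immediate from the switch condition $(\ddag)$, since all terms in those sums are nonnegative: deleting the last or the first edge of a reduced edge-path can only weakly increase its $\nu$-weight, and iterating this edge by edge from the two ends of $v'$ down to $v$ gives the inequality. I would also use the standard flip-symmetry $\langle v,\nu\rangle_\Gamma=\langle v^{-1},\nu\rangle_\Gamma$, which holds because $\nu$ is flip-invariant.

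Now suppose $\langle v,\eta_w\rangle_\Gamma>0$. By the lemma identifying occurrence counts with weights of counting currents, $\langle v,\eta_w\rangle_\Gamma=\langle v,w\rangle_\Gamma$, so $v$ or $v^{-1}$ can be read around the cyclic word $w$. Since $w$ is cyclically reduced, $w^n$ is a reduced edge-path for every $n\ge 1$, and for $n$ sufficiently large the path $v$, respectively $v^{-1}$, occurs as a subpath of $w^n$. Combining the monotonicity observation with the hypothesis $\langle w^n,\nu\rangle_\Gamma>0$ yields $\langle v,\nu\rangle_\Gamma=\langle v^{-1},\nu\rangle_\Gamma\ge\langle w^n,\nu\rangle_\Gamma>0$, which is exactly the claim.

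I do not expect a serious obstacle here; the one point requiring a little care is the monotonicity of path-weights under passing to subpaths, since this is precisely the mechanism by which the single family of hypotheses $\langle w^n,\nu\rangle_\Gamma>0$ ($n\ge 1$) forces positivity of the weight of \emph{every} finite subpath of the axis of $w$. The assumption that $w$ is cyclically reduced enters only to guarantee that the powers $w^n$ are themselves reduced edge-paths, so that $\langle w^n,\nu\rangle_\Gamma$ is a bona fide weight and the subpath argument applies.
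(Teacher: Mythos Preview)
Your proof is correct. It differs from the paper's argument in its packaging, though the underlying mechanism is the same.

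The paper argues more structurally: it observes that $\supp(\eta_w)$ is exactly the $F_N$-orbit of the two pole pairs $p_\pm=(w^{\mp\infty},w^{\pm\infty})$, checks directly that $p_+,p_-\in\supp(\nu)$ (this is where the hypothesis $\langle w^n,\nu\rangle_\Gamma>0$ enters, since the cylinder neighborhoods of $p_+$ along the axis of $w$ project to subpaths of powers of $w$), and then concludes by $F_N$-invariance of $\supp(\nu)$. Your route is the ``dual'' one via Remark~\ref{r:sup}: you never name $\supp(\eta_w)$ explicitly but instead reduce everything to the combinatorial implication $\langle v,\eta_w\rangle_\Gamma>0\Rightarrow\langle v,\nu\rangle_\Gamma>0$, which you prove cleanly using the monotonicity of weights under passing to subpaths (a consequence of $(\ddag)$). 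The paper's version is shorter because it only has to verify membership of two specific points and then invokes invariance; your version is more self-contained in that it never appeals to the orbit description of $\supp(\eta_w)$ and makes the role of the switch condition explicit. Both are fine.
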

\begin{proof}
Let $p_+=(w^{-\infty},w^{\infty})$. The cylinders determined by longer and longer finite subsegments of the axis of $w$ form a neighborhood basis at $p_+$. Every such segment is, after translating along the axis and possibly passing to a subpath, contained in a lift of $w^n$ for some $n$. Since $\langle w^n,\nu\rangle_\Gamma>0$, the cylinder of that lift of $w^n$ has positive $\nu$-measure and is contained in the cylinder of the shorter segment. Hence every cylinder neighborhood required by the support criterion has positive measure. Thus $p_+\in\supp(\nu)$. Flip invariance gives $p_-=(w^{\infty},w^{-\infty})\in\supp(\nu)$. Finally,
\[
\supp(\eta_w)=\bigcup_{h\in F_N}h\{p_+,p_-\},
\]
and $F_N$-invariance of $\supp(\nu)$ gives $\supp(\eta_w)\subseteq\supp(\nu)$.
\end{proof}

\begin{lem}\label{lem:f}
Let $1\ne g\in F_N$ be a filling element and let $0\ne \nu\in \CN$ be a current such that $\supp(\eta_g)\subseteq \supp(\nu)$. Then $\nu$ is a filling current.
\end{lem}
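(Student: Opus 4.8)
The plan is to argue by contradiction using the Kapovich--Lustig characterization of zero intersection number, Proposition~\ref{p:KL100}, together with the monotonicity of the support condition under inclusion.

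First I would suppose, for contradiction, that $\nu$ is \emph{not} filling. By the definition of a filling current, this means there exists some $T\in\cvnbar$ with $||\nu||_T=\langle T,\nu\rangle=0$. Applying Proposition~\ref{p:KL100} to the pair $(T,\nu)$, we conclude that $\supp(\nu)\subseteq L(T)$. Now invoke the hypothesis $\supp(\eta_g)\subseteq\supp(\nu)$ to get the chain of inclusions $\supp(\eta_g)\subseteq\supp(\nu)\subseteq L(T)$.

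Next I would feed this back into Proposition~\ref{p:KL100}, this time applied to the pair $(T,\eta_g)$: since $\supp(\eta_g)\subseteq L(T)$, we obtain $||\eta_g||_T=0$. But by part (3) of Proposition~\ref{p:int} (the defining property of the intersection form on rational currents), $||\eta_g||_T=\langle T,\eta_g\rangle=||g||_T$, so $||g||_T=0$. This contradicts the assumption that $g$ is a filling element of $F_N$, which requires $||g||_T>0$ for \emph{every} $T\in\cvnbar$, in particular for this $T$. Hence no such $T$ exists and $\nu$ is filling.

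There is no serious obstacle here: the lemma is essentially a formal consequence of Proposition~\ref{p:KL100} and Proposition~\ref{p:int}(3), the only mildly delicate point being to make sure the support inclusion is threaded through the ``if and only if'' of Proposition~\ref{p:KL100} in both directions in the correct order. One could alternatively phrase the argument contrapositively without changing its content.
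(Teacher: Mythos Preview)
Your proof is correct and follows essentially the same route as the paper's own argument: contradiction via Proposition~\ref{p:KL100} (the Kapovich--Lustig criterion) applied first to $\nu$ and then to $\eta_g$, chaining the support inclusions through $L(T)$. The only difference is cosmetic---you explicitly invoke Proposition~\ref{p:int}(3) to pass from $||\eta_g||_T$ to $||g||_T$, whereas the paper leaves this implicit.
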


\begin{proof}
Suppose, on the contrary, that $\nu$ is not filling. Then there exists $T\in \cvnbar$ such that $\langle T,\nu\rangle=0$.
By \cite[Theorem~1.1]{KL10} this implies that $\supp(\nu)\subseteq L(T)$.  Hence $\supp(\eta_g)\subseteq L(T)$ as well. Therefore, again by \cite[Theorem~1.1]{KL10}, we have $0=\langle T,\eta_g\rangle=||g||_T$, which contradicts that $g$ is filling.
\end{proof}




\begin{cor}\label{c:z}
Let $z$ be a nondegenerate closed reduced and cyclically reduced edge-path in $\Gamma$ representing the conjugacy class of a filling element $g\in F_N$.

Let $0\ne \nu\in\CN$ be such that for every $n\ge 1$ we have $\langle z^n,\nu\rangle_\Gamma>0$. Then $\nu$ is a filling current.
\end{cor}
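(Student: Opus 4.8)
The plan is to deduce this immediately by chaining together Lemma~\ref{lem:w} and Lemma~\ref{lem:f}. First I would observe that since $z$ is a nondegenerate closed reduced and cyclically reduced edge-path in $\Gamma$ representing the conjugacy class $[g]$, the associated counting current satisfies $\eta_z=\eta_g$; this is just the standard fact (recorded in Section~4) that $\eta_g=\eta_{ugu^{-1}}=\eta_{g^{-1}}$ depends only on the conjugacy class $[g]$. Thus the hypothesis $\langle z^n,\nu\rangle_\Gamma>0$ for all $n\ge 1$ is exactly the hypothesis $\langle w^n,\nu\rangle_\Gamma>0$ needed to apply Lemma~\ref{lem:w} with $w=z$, and that lemma gives $\supp(\eta_g)=\supp(\eta_z)\subseteq\supp(\nu)$.

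Next I would feed this into Lemma~\ref{lem:f}: by assumption $g\in F_N$ is filling, and we have just shown $\supp(\eta_g)\subseteq\supp(\nu)$ for the given nonzero current $\nu$, so Lemma~\ref{lem:f} directly yields that $\nu$ is a filling current, as required. There is essentially no obstacle here: the only point requiring a word of justification is the identification $\eta_z=\eta_g$, which is immediate from conjugacy-invariance of counting currents, and everything else is a verbatim invocation of the two preceding lemmas. One could alternatively inline the argument (the points $(w^{-\infty},w^{\infty})$ and $(w^{\infty},w^{-\infty})$ lie in $\supp(\nu)$ by flip-invariance, hence so does their $F_N$-orbit, which is $\supp(\eta_g)$; then use Proposition~\ref{p:KL100} together with filling-ness of $g$ exactly as in the proof of Lemma~\ref{lem:f}), but the modular two-step proof via the named lemmas is cleaner and is the one I would write.
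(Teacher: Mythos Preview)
Your proof is correct and follows exactly the same route as the paper: apply Lemma~\ref{lem:w} to obtain $\supp(\eta_g)\subseteq\supp(\nu)$, then invoke Lemma~\ref{lem:f} to conclude that $\nu$ is filling. The only addition you make is spelling out the identification $\eta_z=\eta_g$, which the paper leaves implicit.
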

\begin{proof}
Lemma~\ref{lem:w} implies that $\supp(\eta_g)\subseteq \supp(\nu)$.
Therefore, by Lemma~\ref{lem:f}, the current $\nu$ is filling.
\end{proof}

\begin{prop}\label{p:ai}
Let $0\ne\nu\in\CN$ be such that for some free basis $A=\{a_1,\dots, a_N\}$ the following holds. For $i=1,\dots, N$ let $w_i$ be a closed reduced and cyclically reduced edge-path in $\Gamma$ representing the conjugacy class of $a_i$ in $F_N$. For $1\le i<j\le N$ let $w_{i,j}$ be a closed reduced and cyclically reduced edge-path in $\Gamma$ representing the conjugacy class of $a_ia_j$ in $F_N$. Suppose that $\langle w_i^n,\nu\rangle_\Gamma>0$ for every $i=1,\dots,N$ and every $n\ge1$, and that $\langle w_{i,j}^n,\nu\rangle_\Gamma>0$ for every $1\le i<j\le N$ and every $n\ge1$. Then  the current $\nu\in\CN$ is filling.
\end{prop}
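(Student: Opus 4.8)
The plan is to argue by contradiction, combining Lemma~\ref{lem:w} with Proposition~\ref{p:KL100} and the classical criterion for an isometric $F_N$-action on an $\R$-tree to have a global fixed point. (I read the hypothesis as saying that $w_{i,j}$ represents the conjugacy class of $a_ia_j$, and the conclusion as asserting that $\nu$ is filling.) First I would record support inclusions. Since each $w_i$ represents $[a_i]$ we have $\eta_{w_i}=\eta_{a_i}$, so the hypothesis $\langle w_i^n,\nu\rangle_\Gamma>0$ for all $n\ge1$ together with Lemma~\ref{lem:w} gives $\supp(\eta_{a_i})\subseteq\supp(\nu)$ for every $i$; likewise, applying Lemma~\ref{lem:w} to each $w_{i,j}$ gives $\supp(\eta_{a_ia_j})\subseteq\supp(\nu)$ for all $1\le i<j\le N$.

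Next, suppose toward a contradiction that $\nu$ is not filling, so that there is some $T\in\cvnbar$ with $\langle T,\nu\rangle=0$. By Proposition~\ref{p:KL100} this means $\supp(\nu)\subseteq L(T)$, hence $\supp(\eta_{a_i})\subseteq L(T)$ and $\supp(\eta_{a_ia_j})\subseteq L(T)$ for all $i,j$. Applying Proposition~\ref{p:KL100} in the reverse direction, together with Proposition~\ref{p:int}(3), yields $\|a_i\|_T=\|\eta_{a_i}\|_T=0$ for all $i$ and $\|a_ia_j\|_T=\|\eta_{a_ia_j}\|_T=0$ for all $i<j$; that is, every $a_i$ and every $a_ia_j$ acts elliptically on $T$.

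Finally, I would invoke the standard $\R$-tree dichotomy: if $g,h$ are elliptic isometries of an $\R$-tree $T$ with ${\rm Fix}(g)\cap{\rm Fix}(h)=\varnothing$, then $gh$ is hyperbolic, with $\|gh\|_T=2\,d_T({\rm Fix}(g),{\rm Fix}(h))>0$. Since each $a_ia_j$ is elliptic, this forces ${\rm Fix}(a_i)\cap{\rm Fix}(a_j)\ne\varnothing$ for all $i<j$. The sets ${\rm Fix}(a_1),\dots,{\rm Fix}(a_N)$ are then pairwise-intersecting closed subtrees of $T$, so by the Helly property for subtrees of an $\R$-tree their total intersection is nonempty; any point of $\bigcap_{i=1}^N{\rm Fix}(a_i)$ is fixed by every $a_i$ and hence by all of $F_N$, contradicting the fact that $T\in\cvnbar$ is a nontrivial minimal action (a minimal action of $F_N$ on an $\R$-tree has no global fixed point unless the tree is a point). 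Hence $\nu$ is filling. The main obstacle is precisely this last step — the combination of the elliptic-pair dichotomy and the Helly property, i.e.\ the classical Serre/Culler--Morgan criterion that $\langle s_1,\dots,s_n\rangle$ fixes a point of an $\R$-tree whenever each $s_i$ and each $s_is_j$ is elliptic; everything else is a routine passage through the support/dual-lamination correspondence already recorded in the excerpt.
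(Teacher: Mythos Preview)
Your proof is correct and follows essentially the same route as the paper's: argue by contradiction, use Lemma~\ref{lem:w} and Proposition~\ref{p:KL100} to conclude that all $a_i$ and $a_ia_j$ are elliptic in $T$, then invoke the elliptic-pair criterion (which the paper cites as \cite[Proposition~1.8]{Pa}) together with the Helly property for subtrees to produce a global fixed point, contradicting $T\in\cvnbar$. Your reading of the typos in the statement ($w_{i,j}$ represents $[a_ia_j]$; the conclusion is that $\nu$ is filling) matches the intended meaning.
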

\begin{proof}
Indeed, suppose $\nu$ is not filling. Then there exists $T\in\cvnbar$ such that $\langle T,\nu\rangle=0$. By \cite[Theorem~1.1]{KL10} this implies that $\supp(\nu)\subseteq L(T)$.

Lemma~\ref{lem:w} implies that for all $i=1,\dots, N$ we have  $\supp(\eta_{a_i})\subseteq \supp(\nu)$,  and for all $1\le i<j\le N$ we have $\supp(\eta_{a_ia_j})\subseteq \supp(\nu)$.
Since $\supp(\nu)\subseteq L(T)$, \cite[Theorem~1.1]{KL10}  implies that for $i=1,\dots, N$
\[
0=\langle T, \eta_{a_i}\rangle=||a_i||_T
\]
and for all $1\le i<j\le N$ we have
\[
0=\langle T, \eta_{a_ia_j}\rangle=||a_ia_j||_T
\]
Thus all $a_i$ and $a_ia_j$ act elliptically on $T$ and so have nonempty fixed sets in $T$.

For $1\le i<j\le N$, the elements $a_i,a_j,a_i a_j$ act elliptically on $T$, and therefore, by \cite[Proposition~1.8]{Pa}, $\Fix_T(a_i)\cap \Fix_T(a_j)\ne\varnothing$. Thus $\Fix_T(a_1),\dots \Fix_T(a_N)$ are nonempty subtrees of $T$ with pairwise nonempty intersections. Therefore $\cap_{i=1}^N \Fix_T(a_i)\ne\varnothing$. Hence $F_N$ has a global fixed point in $T$, which contradicts the fact that $T\in\cvnbar$ is a nontrivial $F_N$-tree.
\end{proof}

\begin{prop}
Let $F_N=F(A)$ (where $N\ge 2$) and let $w\in F(A)$ be a freely and cyclically reduced word such that for every $v\in F(A)$ with $|v|_A=3$, the word $v$ occurs as a subword of some cyclic permutation of $w$ or of $w^{-1}$. Then:
\begin{enumerate}
\item The element $w\in F_N$ is filling.
\item If $0\ne \nu\in \CN$ is such that for all $n\ge 1$ $\langle w^n,\nu\rangle_A>0$, then the current $\nu$ is filling in $F_N$.
\end{enumerate}
\end{prop}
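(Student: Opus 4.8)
\emph{Overview and reduction of (2) to (1).} The plan is to deduce (2) quickly from (1) and the earlier lemmas, and to concentrate the real work on (1), which I would prove by contradiction using the dual-lamination description of filling currents (Proposition~\ref{p:KL100}). For the reduction: the hypothesis forces $|w|_A\ge 3$, so $w$ is a nondegenerate closed reduced and cyclically reduced edge-path in the rose $R_A$ representing its own conjugacy class; granting (1), the element $w$ is filling. I would then apply Corollary~\ref{c:z} with $\Gamma=R_A$, $z=w$ and $g=w$: its hypothesis $\langle w^n,\nu\rangle_A>0$ for all $n\ge 1$ is exactly what is assumed, so $\nu$ is filling. (Equivalently, Lemma~\ref{lem:w} gives $\supp(\eta_w)\subseteq\supp(\nu)$, and then Lemma~\ref{lem:f}, applied to the filling element $w$, gives the conclusion.) Thus it suffices to prove (1).

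\emph{Setup for (1).} Suppose $w$ is not filling and choose $T\in\cvnbar$ with $||w||_T=\langle T,\eta_w\rangle=0$; by \cite[Theorem~1.1]{KL10} (Proposition~\ref{p:KL100}) this means $\supp(\eta_w)\subseteq L(T)$. By Remark~\ref{r:sup} applied to the chart $R_A$, $\supp(\eta_w)$ is exactly the set of $(x,y)\in\dd$ all of whose finite geodesic subsegments spell words occurring (forwards or backwards) in some cyclic permutation of $w$; in particular the bi-infinite periodic geodesic of period $w$ and its flip (of period $w^{-1}$) belong to $\supp(\eta_w)$. Hence the assumption that every $v\in F(A)$ with $|v|_A=3$ occurs in a cyclic permutation of $w$ or of $w^{-1}$ yields $\langle v,\eta_w\rangle_A>0$ for every $v$ with $|v|_A\le 3$; equivalently, the Whitehead graph of $\supp(\eta_w)$ with respect to $R_A$ is the complete graph on the $2N$ directions $\{a_1^{\pm1},\dots,a_N^{\pm1}\}$, and moreover every chained pair of turns at the vertex of $R_A$ is realized by a leaf.

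\emph{The crux, and the main obstacle.} It remains to contradict $\supp(\eta_w)\subseteq L(T)$ using this richness. My plan is to follow the proof of Proposition~\ref{p:ai}: show that enough elements of $F_N$ are elliptic in $T$, then apply Paulin's intersection-of-fixed-subtrees result \cite[Proposition~1.8]{Pa} to produce a global $F_N$-fixed point in $T$, contradicting that $T$ is a nontrivial tree. Recall that $g\in F_N$ is elliptic in $T$ iff $\supp(\eta_g)\subseteq L(T)$, i.e.\ iff $(g^{-\infty},g^\infty)\in L(T)$, and that $L(T)$ is a closed, $F_N$-invariant, flip-invariant subset of $\dd$ which, on the relevant part of $\partial F_N$, is obtained by collapsing the fibres of a map to $\overline T$ (see \cite{KL10}) and so behaves like an equivalence relation. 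The idea is to use completeness of the Whitehead graph together with the length-$3$ data to propagate membership in $L(T)$ along leaves of $\supp(\eta_w)$, starting from the axis of $w$, until one reaches the lines $(a_i^{-\infty},a_i^\infty)$ and $((a_ia_j)^{-\infty},(a_ia_j)^\infty)$ for all $i,j$, so that $a_i$ and $a_ia_j$ are all elliptic. This propagation is where the work lies: unlike in Proposition~\ref{p:ai}, $\supp(\eta_w)$ does not literally contain these axes, so Lemma~\ref{lem:w} is not available, and one must genuinely exploit the length-$3$ combinatorics — I suspect completeness of the Whitehead graph alone is not enough — to conclude $L(T)=\dd$, equivalently that the $\overline T$-map is constant, which is impossible for a nontrivial tree. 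A shortcut would be available if \cite{KL10} contains a local strengthening of Proposition~\ref{p:fs}, to the effect that a current whose $R_A$-weights on all words of length $\le 3$ are positive is already filling; in that case (1) follows immediately from the positivity established above. Apart from this one step, the argument is routine.
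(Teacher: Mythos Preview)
Your reduction of (2) to (1) via Corollary~\ref{c:z} (equivalently Lemma~\ref{lem:w} followed by Lemma~\ref{lem:f}) is correct and is exactly what the paper does.

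For (1), however, the paper does not argue from scratch: it simply invokes \cite[Corollary~5.6]{CM15} (Cashen--Manning), which is precisely the statement that a cyclically reduced $w$ containing every reduced word of length~$3$ (up to inversion and cyclic permutation) is filling. Your attempted direct proof has a genuine gap at exactly the point you identify. From $\supp(\eta_w)\subseteq L(T)$ you get only that the bi-infinite periodic line of $w$ (and its $F_N$-translates and flips) lies in $L(T)$; the ``propagation'' you sketch, aiming to force the axes of $a_i$ and $a_ia_j$ into $L(T)$, does not go through as written. The dual lamination $L(T)$ is closed, $F_N$-invariant and flip-invariant, but it is not in general a transitive relation on $\partial F_N$, so sharing turns or short subsegments with a leaf of $L(T)$ does not by itself place a new bi-infinite line in $L(T)$. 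Nor does the hoped-for shortcut exist in \cite{KL10}: Proposition~\ref{p:fs} there requires full support, i.e.\ positivity of \emph{all} weights, not just those of length~$\le 3$. The passage from length-$3$ positivity to filling is the actual content of the Cashen--Manning argument, which uses Whitehead-graph connectivity together with the structure theory of very small $F_N$-trees; it is not a routine consequence of Proposition~\ref{p:KL100}. So for (1) you should cite \cite[Corollary~5.6]{CM15}, as the paper does.
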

\begin{proof}
Part (1) is exactly \cite[Corollary~5.6]{CM15}.

Now part (1) implies part (2) by Corollary~\ref{c:z}.
\end{proof}

\section{\texorpdfstring{Filling currents and $M$-minimality}{Filling currents and M-minimality}}

Also, as before, we denote by $T_A$ the Cayley tree of $F_N$ with respect to the free basis $A$. Thus $T_A$ is a simplicial tree with all edges of length $1$.

\begin{defn}
Let $0\ne \nu\in \CN$. The \emph{automorphic distortion spectrum} of $\nu$ with respect to the free basis $A$ of $F_N$ is the set
\[
D_A(\nu):=\{\|\phi\nu\|_A: \phi\in\Out(F_N)\}.
\]
Also denote $J_A(\nu):=\inf D_A(\nu)$.
\end{defn}

\begin{rem}
Thus $D_A(\nu)\subseteq\R_{>0}$. For every $\phi\in\Out(F_N)$,
\[
||\phi\nu||_A=\langle T_A,\phi\nu\rangle
=\langle T_A\phi,\nu\rangle.
\]
If $B$ is another free basis, choose $\psi\in\Out(F_N)$ with $T_B=T_A\psi$. Then
\[
||\phi\nu||_B=\langle T_A\psi,\phi\nu\rangle
=\langle T_A,\psi\phi\nu\rangle.
\]
As $\phi$ ranges over $\Out(F_N)$, so does $\psi\phi$; hence $D_B(\nu)=D_A(\nu)$. We retain the subscript because the minimizing automorphisms and the associated length function are expressed relative to the chosen basis.

Note also that for $1\ne w\in F_N$ and $\phi\in \Out(F_N)$ we have $\langle T_A, \phi \eta_w \rangle=\langle T_A, \eta_\phi(w)\rangle=||\phi(w)||_A$. Therefore in this case
$D_A(\eta_w)=\{||\phi(w)||_A: \phi\in \Out(F_N)\}\subseteq \Z_{>0}$, and $J_A(\eta_w)$ is the smallest $||.||_A$-length of elements in the orbit $\Out(F_N)[w]$.
\end{rem}

We need the following useful result essentially proved in \cite[Theorem~11.2]{KL10}:
\begin{prop}\label{p:KL10}
Let $\nu\in \CN$ (where $N\ge 2$) be a filling current and let $A$ be a free basis of $F_N$.  Then:
\begin{enumerate}
\item The set $D_A(\nu)$ is a discrete unbounded subset of $[0,\infty)$.

\item For every $C>0$ the set $\{\phi\in \Out(F_N): ||\phi \nu||_A\le C\}$ is finite.
\end{enumerate}
\end{prop}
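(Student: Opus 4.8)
The plan is to reduce everything to part (2), which I would obtain from a compactness estimate over the compactified Outer space together with the (known) finiteness of the $\Out(F_N)$-stabilizer of a filling conjugacy class; the latter is essentially the element-level version of the assertion and is contained in \cite[Theorem~1.2]{KL10}.

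First I would note that part (1) is a purely formal consequence of part (2). Since $N\ge 2$, the group $\Out(F_N)$ is infinite, so if, by part (2), the set $\{\phi\in\Out(F_N):\langle T_A,\phi\nu\rangle\le C\}$ is finite for every $C>0$, then the values $\langle T_A,\phi\nu\rangle$ cannot be bounded, whence $D_A(\nu)$ is unbounded; and for each fixed $C$ the intersection $D_A(\nu)\cap[0,C]$ is the image of the finite set $\{\phi:\langle T_A,\phi\nu\rangle\le C\}$ under $\phi\mapsto\langle T_A,\phi\nu\rangle$, hence is finite, so $D_A(\nu)$ is discrete. Thus it remains to prove part (2).

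For part (2) I would first establish a two-sided comparison. Fix a filling element $1\ne g_0\in F_N$; such elements exist, e.g. by \cite[Corollary~5.6]{CM15} any cyclically reduced word in which every length-$3$ word occurs as a subword of a cyclic permutation of it or of its inverse is filling. Since $g_0$ is filling, $\langle T,\eta_{g_0}\rangle=||g_0||_T>0$ for all $T\in\cvnbar$, and this function is continuous and $\R_{>0}$-homogeneous on $\cvnbar$ by Proposition~\ref{p:int}; hence $\Sigma:=\{T\in\cvnbar:\langle T,\eta_{g_0}\rangle=1\}$ maps homeomorphically onto $\CVNbar$ and is therefore compact (using that $\CVNbar$ is the compactification of Outer space). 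As $\nu$ is filling, $\langle\,\cdot\,,\nu\rangle$ is continuous and strictly positive on $\Sigma$, so it attains there a minimum $m>0$ and a maximum $M<\infty$; by homogeneity this yields
\[
m\,||g_0||_T\ \le\ \langle T,\nu\rangle\ \le\ M\,||g_0||_T\qquad\text{for every }T\in\cvnbar .
\]
Taking $T=T_A\phi$ and using $\langle T_A\phi,\nu\rangle=\langle T_A,\phi\nu\rangle$ and $\langle T_A\phi,\eta_{g_0}\rangle=\langle T_A,\phi\eta_{g_0}\rangle=\langle T_A,\eta_{\phi(g_0)}\rangle=||\phi(g_0)||_A$ (all from Proposition~\ref{p:int}), we obtain $m\,||\phi(g_0)||_A\le\langle T_A,\phi\nu\rangle\le M\,||\phi(g_0)||_A$ for all $\phi\in\Out(F_N)$.

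Finally I would conclude: by the left-hand inequality, $\{\phi:\langle T_A,\phi\nu\rangle\le C\}\subseteq\{\phi\in\Out(F_N):||\phi(g_0)||_A\le C/m\}$. Only finitely many conjugacy classes $[h]$ in $F_N$ have $||h||_A\le C/m$, and for each such $[h]$ the set $\{\phi:\phi[g_0]=[h]\}$ is empty or a single coset of $Stab_{\Out(F_N)}([g_0])$; so the set in question is finite provided $Stab_{\Out(F_N)}([g_0])$ is finite, which holds because $g_0$ is filling — this is (the element form of) \cite[Theorem~1.2]{KL10}. Alternatively one may cite \cite[Theorem~1.2]{KL10} directly in its current form, bypassing the reduction to a single element. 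I expect this last appeal to be the main obstacle: the comparison estimate above is elementary once one has continuity of the intersection form (Proposition~\ref{p:int}) and compactness of $\CVNbar$, whereas the finiteness of $Stab_{\Out(F_N)}([g_0])$ — equivalently, the properness of $[T]\mapsto\langle T,\nu\rangle$ on $\CVN$ — rests on the non-trivial work of \cite{KL09,KL10}, in particular on Proposition~\ref{p:KL100}.
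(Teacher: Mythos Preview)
Your proof is correct. The paper's own argument is essentially a bare citation: it observes that the proof of \cite[Theorem~11.2]{KL10} goes through verbatim for a filling $\nu$, since the only place the hypothesis is used is to guarantee $\langle T_\infty,\nu\rangle>0$ for a certain limit tree $T_\infty\in\cvnbar$, which is precisely the definition of filling.

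Your route is genuinely different in presentation: rather than re-running the KL10 proof, you reduce part~(2) to the element-level statement via a uniform comparison $m\,||g_0||_T\le\langle T,\nu\rangle\le M\,||g_0||_T$ over $\cvnbar$, obtained from compactness of $\CVNbar$ and continuity/positivity of the intersection form on a slice. This is a clean and reusable device: it shows that properness-type statements for arbitrary filling currents follow formally from the same statements for counting currents of filling elements. The trade-off, as you yourself flag, is that the endgame still invokes \cite{KL10} --- the finiteness of $Stab_{\Out(F_N)}([g_0])$ is exactly the special case $\nu=\eta_{g_0}$ of the proposition being proved --- so the reduction reorganizes rather than eliminates the KL10 input. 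Your alternative of citing \cite[Theorem~1.2]{KL10} directly ``in its current form'' is in fact what the paper does.
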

\begin{proof}
Theorem~11.2 of \cite{KL10} is stated for currents of full support. Its proof uses that hypothesis only to obtain
\[
\langle T_\infty,\nu\rangle>0
\]
for a limiting tree $T_\infty\in\cvnbar$. The definition of a filling current gives exactly this positivity for every $T_\infty\in\cvnbar$. The remainder of the proof of \cite[Theorem~11.2]{KL10} therefore applies verbatim and gives the finite-sublevel conclusion in (2). It follows that $D_A(\nu)\cap[0,C]$ is finite for every $C>0$, and hence that $D_A(\nu)$ is discrete. If $D_A(\nu)$ were bounded, then all of the infinite group $\Out(F_N)$ would lie in a single finite sublevel set, a contradiction. Thus $D_A(\nu)$ is unbounded, proving (1).
\end{proof}

Proposition~\ref{p:KL10} immediately implies:

\begin{cor}
For $F_N$ and $A$ as in Proposition~\ref{p:KL10} let $\nu\in \CN$ be a filling current. Then:
\begin{enumerate}
\item We have $J_A(\nu)\in D_A(\nu)$, so that $J_A(\nu)=\min D_A(\nu)$.
\item The set $\mathfrak W_{A,\nu}=\{\phi\in\Out(F_N): ||\phi\nu||_A=J_A(\nu)\}$ is finite and nonempty.
\end{enumerate}
\end{cor}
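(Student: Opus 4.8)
The plan is to derive both statements of the corollary directly from the two conclusions of Proposition~\ref{p:KL10}, with essentially no extra work; all the substantive content already sits inside Proposition~\ref{p:KL10} itself (whose proof is the verbatim adaptation of the argument in \cite{KL10}).

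First I would record the basic setup: since $\nu$ is filling and nonzero, Proposition~\ref{p:int} gives $\langle T_A, \phi\nu\rangle = \langle T_A\phi, \nu\rangle > 0$ for every $\phi\in\Out(F_N)$ (here $T_A\phi\in\cvn\subseteq\cvnbar$), so $D_A(\nu)$ is a nonempty subset of $(0,\infty)$ and $J_A(\nu)=\inf D_A(\nu)$ is a well-defined real number $\ge 0$. For part (1), fix any $\phi_0\in\Out(F_N)$ and set $C=\langle T_A,\phi_0\nu\rangle$, so that $J_A(\nu)\le C$. Proposition~\ref{p:KL10}(2) says that $E=\{\phi\in\Out(F_N)\mid \langle T_A,\phi\nu\rangle\le C\}$ is finite, and it is nonempty since $\phi_0\in E$. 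Hence $\{\langle T_A,\phi\nu\rangle\mid \phi\in E\}$ is a finite nonempty subset of $\R$ and attains its minimum, say at $\psi\in E$; because every $\phi\notin E$ satisfies $\langle T_A,\phi\nu\rangle>C\ge\langle T_A,\psi\nu\rangle$, this value is in fact $\min D_A(\nu)$, so $J_A(\nu)=\langle T_A,\psi\nu\rangle\in D_A(\nu)$. (Alternatively, the same conclusion follows from Proposition~\ref{p:KL10}(1) alone once ``discrete'' is read as ``closed and discrete'', since a closed subset of $[0,\infty)$ contains its infimum.)

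For part (2), part (1) already gives $J_A(\nu)\in D_A(\nu)$, so $\Delta_A(\nu)=\{\phi\in\Out(F_N)\mid \langle T_A,\phi\nu\rangle=J_A(\nu)\}$ is nonempty. Finiteness is then immediate: $\Delta_A(\nu)\subseteq\{\phi\mid \langle T_A,\phi\nu\rangle\le J_A(\nu)\}$, which is finite by Proposition~\ref{p:KL10}(2) applied with $C=J_A(\nu)$. I do not expect any genuine obstacle here; the only point worth a moment's care is the logical gap between ``$D_A(\nu)$ is discrete'' and ``$D_A(\nu)$ attains its infimum'', which is why I route the argument for part (1) through the finiteness-of-sublevel-sets statement of Proposition~\ref{p:KL10}(2) rather than through discreteness of $D_A(\nu)$ by itself.
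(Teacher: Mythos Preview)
Your proof is correct and matches the paper's approach: the paper simply states that the corollary follows immediately from Proposition~\ref{p:KL10} and gives no further argument, so your explicit derivation via the finiteness of sublevel sets in Proposition~\ref{p:KL10}(2) is exactly the intended reasoning spelled out in full. Your remark about routing through part~(2) rather than relying on ``discrete'' alone is a sensible bit of care, though in context the intended meaning of ``discrete subset of $[0,\infty)$'' in Proposition~\ref{p:KL10}(1) is closed and discrete.
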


We also record the following useful general consequence of Proposition~\ref{p:KL10}:

\begin{prop}\label{p:st}
Let $\nu\in \CN$ (where $N\ge 2$) be a filling current. Then:

\begin{enumerate}
\item The stabilizer $\Stab_{\Out(F_N)}(\nu)$ is finite.
\item We have $\Stab_{\Out(F_N)}(\nu)=\Stab_{\Out(F_N)}([\nu])$, where $[\nu]\in \PCN$ is the projective class of $\nu$. 
\item If $\tau\in \mathcal W_N$ is a Whitehead move of the second kind such that $\tau\ne 1$ in $\Out(F_N)$ then $\tau\nu\ne \nu$ and $\tau[\nu]\ne [\nu]$. 
\end{enumerate}
\end{prop} 
\begin{proof}
Let $A$ be a free basis of $F_N$. Put $C=||\nu||_A$.  For every $\phi\in \Stab_{\Out(F_N)}(\nu)$ we have $||\phi\nu||_A=||\nu||_A=C$. Therefore
\[
\Stab_{\Out(F_N)}(\nu)\subseteq \{\phi\in \Out(F_N): ||\phi \nu||_A\le C\}
\]
and hence $\Stab_{\Out(F_N)}(\nu)$ is finite by Proposition~\ref{p:KL10}.  Thus (1) is verified.

For (2), suppose that $\psi\in \Stab_{\Out(F_N)}([\nu])$. Thus $\psi\nu=t\nu$ for some $t>0$. 

Suppose first that $t\ne 1$. Then $\psi^{-1}\nu=\frac{1}{t}\nu$. Therefore, after possibly replacing $\psi$ by $\psi^{-1}$, we have $\psi\nu=t\nu$ with $t<1$. Hence for every $n\ge 1$ $\psi^n \nu=t^n\nu$. Since $t\ne 1$, this implies that $\psi^n\nu\ne \nu$, so that $\psi$ has infinite order in $\Out(F_N)$. Moreover, since $t<1$, for every $n\ge 1$ we have $||\psi^n\nu||_A=t^n||\nu||_A\le ||\nu||_A=C$. Hence
\[
\{\psi^n| n\ge 1\}\subseteq   \{\phi\in \Out(F_N): ||\phi \nu||_A\le C\},
\]
which contradicts the fact that by Proposition~\ref{p:KL10} the latter set is finite. Thus $t=1$, so that $\psi\nu=t\nu=\nu$ and $\psi\in \Stab_{\Out(F_N)}(\nu)$. Since $\psi\in \Stab_{\Out(F_N)}([\nu])$ was arbitrary, it follows that $\Stab_{\Out(F_N)}([\nu])=\Stab_{\Out(F_N)}(\nu)$. The inclusion $\Stab_{\Out(F_N)}(\nu)\subseteq \Stab_{\Out(F_N)}([\nu])$ is obvious. Hence $\Stab_{\Out(F_N)}(\nu)=\Stab_{\Out(F_N)}([\nu])$, and (2) holds, as required. 

It remains to justify the infinite-order assertion used in (3). Represent $\tau$ by a Whitehead automorphism $\Phi$ of the second kind with multiplier $a\in A^{\pm1}$, and let $a_0\in A$ be the underlying basis element, so that $a=a_0^{\pm1}$. If the induced map on $H_1(F_N;\Z)$ is nontrivial, its matrix has the form $I+Q$, where $Q\ne0$, the image of $Q$ is contained in the line spanned by $a$, and $Q(a)=0$. Hence $Q^2=0$ and
\[
(I+Q)^n=I+nQ\ne I \qquad(n\ge1),
\]
so the outer class of $\Phi$ has infinite order.

Suppose instead that $\Phi$ acts trivially on $H_1(F_N;\Z)$. Then every $x\in A\setminus\{a_0\}$ is mapped either to itself or to $a^{-1}xa$; thus $\Phi$ is a partial conjugation. Conjugating no element of $A\setminus\{a_0\}$ gives the identity, while conjugating all of them gives an inner automorphism. Since $\tau\ne1$ in $\Out(F_N)$, $\Phi$ conjugates a nonempty proper subset of $A\setminus\{a_0\}$. Choose $x$ in that subset and $y$ in its complement. Then
\[
\Phi^n(x)=a^{-n}xa^n,\qquad \Phi^n(a)=a,\qquad \Phi^n(y)=y.
\]
If $\Phi^n$ were inner, its conjugating element would centralize the two noncommuting basis elements $a$ and $y$, and hence would be trivial; this contradicts $\Phi^n(x)\ne x$. Thus $\tau$ again has infinite order. Parts (1) and (2) now imply
\[
\tau\notin\Stab_{\Out(F_N)}(\nu)
=\Stab_{\Out(F_N)}([\nu]),
\]
which proves (3).
\end{proof}

\begin{defn}
Let $F_N$ be free of finite rank $N\ge 2$, let $A$ be a free basis of $F_N$ and let $\nu\in \CN$ be a filling current.

(1) We call the set $\mathfrak W_{A,\nu}:=\{\phi\in\Out(F_N): ||\phi\nu||_A=J_A(\nu)\}$ the \emph{$A$-minimizing set for $\nu$} and we call the integer $M_A(\nu):=\#\mathfrak W_{A,\nu}\ge 1$  the \emph{minimizing multiplicity} for $\nu$ with respect to $A$.

(2) Also let 
\[
J'_A(\nu)=\min\{||\phi\nu||_A: \phi\in \Out(F_N), \phi\not\in \mathfrak W_{A,\nu}\}=\min \left(D_A(\nu)\setminus\{J_A(\nu)\}\right).
\] 
and let $\lambda_A(\nu)=\frac{J'_A(\nu)}{J_A(\nu)}$, so that $\lambda_A(\nu)>1$. We call $\lambda_A(\nu)$ the \emph{distortion threshold} for $\nu$ with respect to $A$.  

(3) Denote $\Im_{A,\nu}=\mathfrak W_{A,\nu}\nu=\{\phi\nu|\phi\in \mathfrak W_{A,\nu}\}\subseteq \CN$ and call $\Im_{A,\nu}$ the \emph{orbit floor} for $\nu$.

(4) For $\nu'\in \Im_{A,\nu}$ denote $R_{A,\nu,\nu'}=\{\psi\in \Out(F_N): \psi\nu'\in \Im_{A,\nu}\}$. 
\end{defn}

\begin{lem}\label{lem:R}
Let $F_N$ be free of rank $N\ge2$, let $A$ be a free basis, and let $\nu\in\CN$ be filling. Suppose that $\nu'=\phi\nu\in\Im_{A,\nu}$ with $\phi\in\mathfrak W_{A,\nu}$. Then:
\begin{enumerate}
\item
\[
R_{A,\nu,\nu'}=\mathfrak W_{A,\nu}\phi^{-1}
=\{\phi'\phi^{-1}:\phi'\in\mathfrak W_{A,\nu}\}.
\]
In particular, $\#R_{A,\nu,\nu'}=M_A(\nu)$.
\item For every current $\eta$ and every conjugacy class $[u]$,
\[
R_{A,\nu,\nu'}\eta=\mathfrak W_{A,\nu}\phi^{-1}\eta,
\qquad
R_{A,\nu,\nu'}[u]=\mathfrak W_{A,\nu}\phi^{-1}[u].
\]
\item $R_{A,\nu,\nu'}\nu'=\Im_{A,\nu}$.
\end{enumerate}
\end{lem}
\begin{proof}
Put $H=\Stab_{\Out(F_N)}(\nu)$. First observe that the minimizing set is right $H$-invariant:
\[
\mathfrak W_{A,\nu}H=\mathfrak W_{A,\nu},
\]
because $h\nu=\nu$ for $h\in H$. Now $\psi\nu'\in\Im_{A,\nu}$ if and only if there exists $\phi'\in\mathfrak W_{A,\nu}$ such that
\[
\psi\phi\nu=\phi'\nu.
\]
Equivalently, $(\phi')^{-1}\psi\phi\in H$, or
\[
\psi\in\phi'H\phi^{-1}
\subseteq\mathfrak W_{A,\nu}\phi^{-1}.
\]
The converse inclusion is immediate: if $\psi=\theta\phi^{-1}$ with $\theta\in\mathfrak W_{A,\nu}$, then $\psi\nu'=\theta\nu\in\Im_{A,\nu}$. This proves (1), including the cardinality assertion. Part (2) follows by applying the equality of sets of automorphisms, and part (3) follows from
\[
R_{A,\nu,\nu'}\nu'
=\mathfrak W_{A,\nu}\phi^{-1}\phi\nu
=\mathfrak W_{A,\nu}\nu
=\Im_{A,\nu}.\qedhere
\]
\end{proof}

We now obtain the following key technical result of this paper:

\begin{thm}\label{t:key1}
Let $F_N$ be free of rank $N\ge2$, let $A$ be a free basis, and let $\nu\in\CN$ be a filling current. Let
\[
1<\lambda<\lambda_A(\nu),\qquad 0<\epsilon<\lambda-1.
\]
Put $\mathfrak W=\mathfrak W_{A,\nu}$ and $M=M_A(\nu)=\#\mathfrak W$. Then there exists a neighborhood $U=U([\nu],\lambda,\epsilon)$ of $[\nu]$ in $\PCN$ such that, whenever $1\ne w\in F_N$ and $[\eta_w]\in U$, the following hold:
\begin{enumerate}
\item $S=\mathfrak W[w]$ is $(M,\lambda,\epsilon,\mathcal W_N)$-minimizing and $\mathcal M([w])\subseteq S$;
\item every element of $S$ is $M$-minimal;
\item $\Stab_{\Out(F_N)}([w])$ is finite.
\end{enumerate}
\end{thm}
\begin{proof}
Put $\Im=\Im_{A,\nu}=\mathfrak W\nu$. Every $\nu'\in\Im$ has $\|\nu'\|_A=J_A(\nu)$. If $\psi\nu'\notin\Im$, then
\[
\frac{\|\psi\nu'\|_A}{\|\nu'\|_A}\ge\lambda_A(\nu).
\]
Choose numbers $\lambda_1$ and $\delta$ such that
\[
\lambda<\lambda_1<\lambda_A(\nu),\qquad
0<\delta<\epsilon,\qquad
\lambda_1>1+\delta,\qquad
\lambda_1(1-\delta)>1.
\tag{6.1}
\]
For $\nu'\in\Im$, put $R_{\nu'}=R_{A,\nu,\nu'}$. By Lemma~\ref{lem:R}, $\#R_{\nu'}=M$ and $R_{\nu'}\nu'=\Im$.

For fixed $\nu'\in\Im$, the functions
\[
[\eta]\longmapsto\frac{\|\psi\eta\|_A}{\|\eta\|_A}
\]
are continuous on $\PCN$. At $[\nu']$ their value is $1$ for $\psi\in R_{\nu'}$, while for every $\tau\in\mathcal W_N\setminus R_{\nu'}$ their value is at least $\lambda_A(\nu)>\lambda_1$. Since $\Im$, every $R_{\nu'}$, and $\mathcal W_N$ are finite, we may choose neighborhoods $V_{\nu'}$ of $[\nu']$, equivariantly with respect to the finite groupoid of maps $R_{\nu'}$, so that:
\begin{enumerate}
\item[(a)] if $[\eta]\in V_{\nu'}$ and $\psi\in R_{\nu'}$, then $\psi[\eta]\in V_{\psi\nu'}$ and
\[
1-\delta\le\frac{\|\psi\eta\|_A}{\|\eta\|_A}\le1+\delta;
\]
\item[(b)] if $[\eta]\in V_{\nu'}$ and $\tau\in\mathcal W_N\setminus R_{\nu'}$, then
\[
\frac{\|\tau\eta\|_A}{\|\eta\|_A}>\lambda_1.
\]
\end{enumerate}
For completeness, such an equivariant family is obtained by choosing one point $\nu_0\in\Im$, intersecting the pullbacks of the finitely many preliminary neighborhoods to $[\nu_0]$, making the resulting neighborhood invariant under the finite stabilizer of $\nu_0$, and transporting it to the other points of $\Im$.

Finally, shrink a neighborhood $U$ of $[\nu]$ so that
\[
\phi U\subseteq V_{\phi\nu}
\qquad\text{for every }\phi\in\mathfrak W.
\tag{6.2}
\]
Let $[\eta_w]\in U$ and put $S=\mathfrak W[w]$. Clearly $\#S\le M$, and all elements of $S$ belong to the same $\Out(F_N)$-orbit.

Fix $[u]=\phi[w]\in S$ and put $\nu'=\phi\nu$. By (6.2), $[\eta_u]\in V_{\nu'}$, and Lemma~\ref{lem:R}(2) gives $R_{\nu'}[u]=S$. If $[x]\in S$, choose $\psi\in R_{\nu'}$ with $[x]=\psi[u]$. Property (a) gives
\[
1-\delta\le\frac{\|x\|_A}{\|u\|_A}\le1+\delta,
\]
so condition (3) of Definition~\ref{d:MLEW} holds with $\epsilon$.

Let $\tau\in\mathcal W_N$. If $\tau\in R_{\nu'}$, then $\tau[u]\in S$. If $\tau\notin R_{\nu'}$, property (b) gives
\[
\frac{\|\tau(u)\|_A}{\|u\|_A}>\lambda_1>\lambda,
\]
and $\tau[u]\notin S$, since all elements of $S$ have length at most $(1+\delta)\|u\|_A$ while $\lambda_1>1+\delta$. Thus $S$ is $(M,\lambda,\epsilon,\mathcal W_N)$-minimizing. In fact, the same argument shows that it is $(M,\lambda_1,\delta,\mathcal W_N)$-minimizing. Proposition~\ref{prop:M} therefore implies that every element of $S$ is $M$-minimal. Moreover, if $[u]\in S$, then $\Out(F_N)[u]=\Out(F_N)[w]$; hence Lemma~\ref{lem:aux}, applied to $[u]$ with the parameters $\lambda_1,\delta$, gives $\mathcal M([w])=\mathcal M([u])\subseteq S$.

It remains to prove finiteness of the stabilizer. Choose $[u]\in S$ of minimal length, write $[u]=\phi[w]$ with $\phi\in\mathfrak W$, and put $\nu'=\phi\nu$. By (6.2), $[\eta_u]\in V_{\nu'}$. If a Whitehead move $\tau$ lies in $R_{\nu'}$, then $\tau[u]\in S$ and cannot be shorter than $[u]$; if $\tau\notin R_{\nu'}$, property (b) strictly increases length. Thus no Whitehead move decreases $\|u\|_A$, and $[u]$ is $\Out(F_N)$-minimal by Proposition~\ref{p:wpr}.

Let $\psi\in\Stab_{\Out(F_N)}([u])$. By Proposition~\ref{prop:LSST}, write $\psi=\tau_k\cdots\tau_1$ so that every intermediate class
\[
[u_i]=\tau_i\cdots\tau_1([u])
\]
has length $\|u\|_A$. Put $\nu_0=\nu'$ and, inductively, $\nu_i=\tau_i\cdots\tau_1\nu'$ for $1\le i\le k$. Starting with $[\eta_u]\in V_{\nu_0}$, property (b) rules out $\tau_{i+1}\notin R_{\nu_i}$, because every intermediate conjugacy class has length $\|u\|_A$, while property (a) transports the counting current to $V_{\nu_{i+1}}$. Thus, inductively,
\[
\nu_i=\tau_i\cdots\tau_1\nu'\in\Im
\qquad(1\le i\le k).
\]
Hence $\psi\nu'\in\Im$, so $\psi\in R_{\nu'}$. Therefore
\[
\#\Stab_{\Out(F_N)}([u])\le\#R_{\nu'}=M.
\]
The stabilizers of $[u]$ and $[w]$ are conjugate in $\Out(F_N)$, completing the proof.
\end{proof}

\begin{cor}\label{cor:key1}
Let $F_N$ be free of finite rank $N\ge 2$, let $A$ be a free basis of $F_N$ and let $\nu\in \CN$ be a filling current. Let $\lambda$ be such that $1<\lambda<\lambda_A(\nu)$ and let $0<\epsilon<1$ be such that $\lambda_A(\nu) >\lambda>1+\epsilon$.

Let  $\mathfrak W=\mathfrak W_{A,\nu}$ and let $M=M_A(\nu)=\#\mathfrak W$.

Then there exists a neighborhood $U_1=U_1([\nu], \lambda,\epsilon)$ of $[\nu]$ in $\PCN$ such that for every $1\ne w\in F_N$ with $[\eta_w]\in U_1$  the following hold:

\begin{enumerate}
\item For every $\phi\in \mathfrak W$ the element $\phi[w]$ is $M$-minimal.
\item The set $S=\mathfrak W[w]\subseteq \mathcal C_N$ is $(M,\lambda,\epsilon)$-minimizing.
\item We have $\mathcal M([w])\subseteq S$, and hence $\#\mathcal M([w])\le M$.
\item The stabilizer $\Stab_{\Out(F_N)}([w])$ is finite.
\end{enumerate}
\end{cor}
\begin{proof}
First choose $\lambda'$ such that $\lambda_A(\nu)>\lambda'>\lambda>1$. Then choose $\epsilon'$ such that $0<\epsilon'<\min\{\epsilon,\lambda'-1\}$ and $\lambda'(1-\epsilon')>\lambda$.  By Theorem~\ref{t:key1}, there exists a neighborhood $U=U([\nu], \lambda',\epsilon')$ of $[\nu]$ in $\PCN$ such that for every $1\ne w\in F_N$ with $[\eta_w]\in U$ the set $S=\mathfrak W[w]$ is $(M,\lambda',\epsilon',\mathcal W_N)$-minimizing. Therefore, by Proposition~\ref{prop:cont}, the set $S$ is $(M,\lambda,\epsilon)$-minimizing. Theorem~\ref{t:key1} also gives $\mathcal M([w])\subseteq S$ and finiteness of $\Stab_{\Out(F_N)}([w])$.
Then for any $\phi\in \mathfrak W$ the element $\phi([w])$ is $(M,\lambda',\epsilon',\mathcal W_N)$-minimal. Therefore Proposition~\ref{prop:M} implies that $\phi[w]$ is $M$-minimal.
Thus $U_1:=U$ satisfies the requirements of the corollary.
\end{proof}

\begin{rem}\label{rem:v}
Suppose that $U\subseteq\PCN$ is a neighborhood of $[\nu]$ provided by Theorem~\ref{t:key1}. The coordinate description of the weak-* topology gives a finite set $\mathbb V\subseteq F(A)-\{1\}$ and a number $\epsilon_0>0$ such that
\[
U_0:=\left\{[\eta]\in\PCN:
\left|
\frac{\langle v,\eta\rangle_A}{\|\eta\|_A}
-\frac{\langle v,\nu\rangle_A}{\|\nu\|_A}
\right|<\epsilon_0
\text{ for every }v\in\mathbb V
\right\}
\]
is a neighborhood of $[\nu]$ contained in $U$. Consequently, if $1\ne w\in F_N$ satisfies
\[
\left|
\frac{\langle v,w\rangle_A}{\|w\|_A}
-\frac{\langle v,\nu\rangle_A}{\|\nu\|_A}
\right|<\epsilon_0
\qquad(v\in\mathbb V),
\tag{$\spadesuit$}
\]
then $[\eta_w]\in U_0\subseteq U$, and the conclusions of Theorem~\ref{t:key1} apply to $w$.
\end{rem}

\begin{defn}\label{d:adapt}
Let $\mathcal W=W_1, W_2, \dots, W_n, \dots$ be a sequence of $F_N$-valued random variables.

\begin{enumerate}
\item We say that $\mathcal W$ is \emph{tame} if for some (equivalently, any) free basis $A$ of $F_N$ there exists $C>0$ such that we always have $|W_n|_A\le Cn$ where $n\ge 1$.
\item Let $0\ne \nu\in \CN$.  We say that the sequence $\mathcal W$ is \emph{$\nu$-adapted} if, for almost every trajectory $w_1,w_2,\dots,w_n,\dots$ of $\mathcal W$, one has
\[
\lim_{n\to\infty} [\eta_{w_n}]=[\nu]
\]
in $\PCN$.
\end{enumerate}
\end{defn}

In Definition~\ref{d:adapt} above, a random trajectory of $\mathcal W$ is implicitly required to satisfy $w_n\ne 1$ for all sufficiently large $n$ (which is needed in order for $\eta_{w_n}$ to be defined), but we do not require $||w_n||_A\to\infty$ as $n\to\infty$.
In particular, if $\nu=\eta_w$ for some $1\ne w\in F_N$, and the random process $\mathcal W$ always outputs $W_n=w$ for all $n\ge 1$, then $\mathcal W$ is $\nu$-adapted.

The following statement is key for our paper:

\begin{prop}\label{prop:key}
Let $F_N$, $A$, $\nu$, $\lambda$, $\epsilon$, $\mathfrak W$, and $M$ be as in Corollary~\ref{cor:key1}, and let $\mathcal W=W_1,W_2,\dots$ be a $\nu$-adapted sequence of $F_N$-valued random variables. Then:
\begin{enumerate}
\item for almost every trajectory, all sufficiently large $n$ satisfy all conclusions of Corollary~\ref{cor:key1} with $w=W_n$;
\item the probability that all those conclusions hold at time $n$ tends to $1$ as $n\to\infty$.
\end{enumerate}
\end{prop}
\begin{proof}
Let $U_1$ be the neighborhood supplied by Corollary~\ref{cor:key1}. Since $\mathcal W$ is $\nu$-adapted, for almost every trajectory one has $[\eta_{W_n}]\in U_1$ for all sufficiently large $n$. Almost-sure convergence also implies convergence in probability, so
\[
\Pr\bigl([\eta_{W_n}]\in U_1\bigr)\longrightarrow1.
\]
Corollary~\ref{cor:key1} gives the result.
\end{proof}

\begin{thm}\label{t:A}
Let $F_N=F(A)$ be a free group of finite rank $N\ge 2$ with a free basis $A$.

Let $\mathcal W=W_1, W_2,\dots$ be a sequence of $F(A)$-valued random variables. Let $0\ne\nu\in \CN$ be a filling geodesic current such that $\mathcal W$ is adapted to $\nu$.

Then there exist $M\ge 1$, a number $\lambda$ with $1<\lambda<2$, and a subset $\mathfrak W\subseteq \Out(F_N)$ with $\#\mathfrak W\le M$ such that for every $0<\epsilon<1$ with $\lambda>1+\epsilon$ the following hold:

\begin{itemize}
\item[(a)] For almost every trajectory $\xi=(w_1,w_2,\dots, w_n, \dots )$ of $\mathcal W$ there exists $n_0=n_0(\xi)\ge 1$ such that the following hold for all $n\ge n_0$:

\begin{enumerate}
\item  The set $S_n=\mathfrak W [w_n]$ is $(M,\lambda,\epsilon)$-minimizing.
\item For every $\phi\in \mathfrak W$ the conjugacy class $\phi[w_n]\in S_n$ is  $(M,\lambda,\epsilon)$-minimal and $M$-minimal.
\item We have $\mathcal M([w_n])\subseteq \mathfrak W[w_n]$, and in particular, $\#\mathcal M([w_n])\le M$.
\item The stabilizer $\Stab_{\Out(F_N)}([w_n])$ is finite.
\end{enumerate}

\item[(b)] The probability of each of the following events tends to $1$ as $n\to\infty$:
\begin{enumerate}
\item  The set $S_n=\mathfrak W [W_n]$ is $(M,\lambda,\epsilon)$-minimizing.
\item For every $\phi\in \mathfrak W$ the conjugacy class $\phi[W_n]$ is  $(M,\lambda,\epsilon)$-minimal and $M$-minimal.
\item We have $\mathcal M([W_n])\subseteq \mathfrak W[W_n]$, and $\#\mathcal M([W_n])\le M$.
\item The stabilizer $\Stab_{\Out(F_N)}([W_n])$ is finite.
\end{enumerate}
\end{itemize}

\end{thm}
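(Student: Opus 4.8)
The plan is to deduce Theorem~\ref{t:A} by assembling results established earlier; the substantive content lies in Theorem~\ref{t:key1} and Proposition~\ref{p:KL10} (packaged into Proposition~\ref{prop:key}), and what remains is bookkeeping, with the only subtle point being the order of quantifiers.

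First I would produce $M$, $\lambda$ and $\mathfrak W$ from $\nu$ alone. Since $\nu$ is filling, Proposition~\ref{p:KL10} (together with the corollary following it) guarantees that the distortion threshold $\lambda_A(\nu)>1$ is well defined, that the $A$-minimizing set $\Delta_A(\nu)$ is finite and nonempty, and that $M_A(\nu)=\#\Delta_A(\nu)\ge 1$. I would set $\mathfrak W:=\Delta_A(\nu)$, $M:=M_A(\nu)$, and fix once and for all some $\lambda$ with $1<\lambda<\lambda_A(\nu)$. The point that makes the quantifiers in the statement work is that $M_A(\nu)$, $\lambda_A(\nu)$ and $\Delta_A(\nu)$ depend only on $\nu$ and $A$, not on $\epsilon$. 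Consequently, given any $0<\epsilon<1$ with $\lambda>1+\epsilon$, we have $\lambda_A(\nu)>\lambda>1+\epsilon$, so Proposition~\ref{prop:key} applies with these $M,\lambda,\epsilon,\mathfrak W$ and the given $\nu$-adapted process $\mathcal W$.

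For part (a): Proposition~\ref{prop:key}(1) is precisely (a)(1) --- for a.e.\ trajectory $\xi=(w_1,w_2,\dots)$ there is $n_0=n_0(\xi)$ such that $S_n=\mathfrak W[w_n]$ is $(M,\lambda,\epsilon)$-minimizing for all $n\ge n_0$. For such $n$ I would argue as follows. By Definition~\ref{d:MLE}, every element of the $(M,\lambda,\epsilon)$-minimizing set $S_n$ is itself $(M,\lambda,\epsilon)$-minimal; in particular each $\phi[w_n]$ with $\phi\in\mathfrak W$ is $(M,\lambda,\epsilon)$-minimal, giving (a)(2). Next, choose any $[u]\in S_n$, say $[u]=\phi[w_n]$ with $\phi\in\mathfrak W$; then $S_n$ is an $(M,\lambda,\epsilon)$-minimizing set for $[u]$, so Proposition~\ref{p:MLE}(1) yields $\mathcal M([u])\subseteq S_n$, and since $[u]\in\Out(F_N)[w_n]$ we have $\mathcal M([u])=\mathcal M([w_n])$; hence $\mathcal M([w_n])\subseteq\mathfrak W[w_n]$ and $\#\mathcal M([w_n])\le\#S_n\le M$, which is (a)(3). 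Finally $w_n\in Y_N(M,\lambda,\epsilon)$, since its $\Out(F_N)$-orbit contains the $(M,\lambda,\epsilon)$-minimal element $[u]$, so Proposition~\ref{prop:st} gives ${\rm rank}\ Stab_{\Out(F_N)}([w_n])\le K(N,M)$, which is (a)(4).

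For part (b): let $E_n$ be the event that $\mathfrak W[W_n]$ is $(M,\lambda,\epsilon)$-minimizing. Then Proposition~\ref{prop:key}(2) says $Pr(E_n)\to 1$ as $n\to\infty$, which is (b)(1), and repeating the deductions of the previous paragraph with $W_n$ in place of $w_n$ shows that $E_n$ is contained in each of the events (b)(2), (b)(3), (b)(4); hence each of those probabilities is $\ge Pr(E_n)$ and therefore also tends to $1$. I do not anticipate any genuine obstacle at this stage: Theorem~\ref{t:A} is essentially a repackaging of Proposition~\ref{prop:key} together with the structural Propositions~\ref{p:MLE} and~\ref{prop:st}, and the only thing requiring care is that $M$, $\lambda$ and $\mathfrak W$ must be produced before $\epsilon$ is chosen --- which is legitimate precisely because they are functions of $\nu$ and $A$ alone.
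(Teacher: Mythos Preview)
Your proposal is correct and follows essentially the same route as the paper: set $\mathfrak W=\Delta_A(\nu)$, $M=M_A(\nu)$, fix $1<\lambda<\lambda_A(\nu)$, then invoke Proposition~\ref{prop:key} for (a)(1) and (b)(1), the definition of $(M,\lambda,\epsilon)$-minimality for (2), Proposition~\ref{p:MLE}(1) for (3), and Proposition~\ref{prop:st} for (4). Your write-up is in fact more careful than the paper's about the order of quantifiers and about why $\mathcal M([w_n])=\mathcal M([u])$ for $[u]\in S_n$.
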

\begin{proof}
Put $\mathfrak W=\mathfrak W_{A,\nu}$ and $M=M_A(\nu)$. Choose $\lambda$ with
\[
1<\lambda<\min\{\lambda_A(\nu),2\}.
\]
If $0<\epsilon<\lambda-1$, then $\epsilon<1$ and $\lambda>1+\epsilon$. Proposition~\ref{prop:key}, together with Corollary~\ref{cor:key1}, therefore gives every assertion in (a) and (b).
\end{proof}

\begin{thm}\label{t:A'}
Let $F_N=F(A)$, $\nu$, $M$, $\lambda$, $\mathfrak W$, and $\mathcal W=W_1,W_2,\dots$ be as in Theorem~\ref{t:A}, and assume that $\mathcal W$ is tame. Then there exists $K_0\ge1$ such that the following hold.

\begin{itemize}
\item[(a)] For almost every pair of independently sampled trajectories
\[
\xi=(w_1,w_2,\dots),\qquad
\xi'=(w_1',w_2',\dots),
\]
there exist $n_0,m_0\ge1$ such that:
\begin{enumerate}
\item for every $n\ge n_0$, the $\mathfrak W$-speed-up of Whitehead minimization on $w_n$ terminates in time at most $K_0n$ and produces an element of $\mathcal M([w_n])$;
\item for every $n\ge n_0$ and every $u\in F_N$, the $\mathfrak W$-speed-up of Whitehead's algorithm decides in time at most
\[
K_0\max\{n,|u|_A^2\}
\]
whether $\Aut(F_N)w_n=\Aut(F_N)u$;
\item for every $n\ge n_0$ and $m\ge m_0$, the $\mathfrak W$-speed-up of Whitehead's algorithm decides in time at most
\[
K_0\max\{n,m\}
\]
whether $\Aut(F_N)w_n=\Aut(F_N)w_m'$.
\end{enumerate}

\item[(b)] The following convergence-in-probability assertions hold.
\begin{enumerate}
\item The probability that the $\mathfrak W$-speed-up of Whitehead minimization on $W_n$ terminates in time at most $K_0n$ and produces an element of $\mathcal M([W_n])$ tends to $1$.
\item The probability that $W_n$ has the following property tends to $1$: for every $u\in F_N$, the $\mathfrak W$-speed-up of Whitehead's algorithm decides in time at most $K_0\max\{n,|u|_A^2\}$ whether $\Aut(F_N)W_n=\Aut(F_N)u$.
\item Let $\mathcal W'=W_1',W_2',\dots$ be an independent copy of $\mathcal W$, and let $n_i,m_i\ge1$ satisfy $\min\{n_i,m_i\}\to\infty$. Then the probability that the $\mathfrak W$-speed-up decides in time at most $K_0\max\{n_i,m_i\}$ whether
\[
\Aut(F_N)W_{n_i}=\Aut(F_N)W_{m_i}'
\]
tends to $1$ as $i\to\infty$.
\end{enumerate}
\end{itemize}
\end{thm}
\begin{proof}
Choose $C\ge1$ so that $|W_n|_A\le Cn$ for every $n$. For each $\phi\in\mathfrak W$, choose a representative $\Phi_\phi\in\Aut(F_N)$. Since $\mathfrak W$ is finite, there exists $L\ge1$ such that
\[
|\Phi_\phi(g)|_A\le L|g|_A
\qquad(\phi\in\mathfrak W,\ g\in F_N).
\tag{6.3}
\]
All constants below depend only on $N$, $M$, $\mathfrak W$, the chosen representatives, and the tameness constant $C$.

Fix $\epsilon_0$ with $0<\epsilon_0<\min\{1,\lambda-1\}$. Call an index $n$ \emph{good} for a trajectory if the four pointwise conclusions in Theorem~\ref{t:A}(a), with $\epsilon=\epsilon_0$, hold for $w_n$. In particular, at a good index every class in $\mathfrak W[w_n]$ is $M$-minimal and
\[
\#\mathcal M([w_n])\le M.
\tag{6.4}
\]
By Theorem~\ref{t:A}, for almost every trajectory all sufficiently large indices are good, and the probability that $n$ is good tends to $1$.

Suppose that $n$ is good. For every $\phi\in\mathfrak W$, the auxiliary branch beginning at $\Phi_\phi(w_n)$ starts from an $M$-minimal class and from a word of length at most $L|w_n|_A$. Theorem~\ref{t:WHM}(a), together with the finite round-robin overhead, therefore makes the speeded-up minimization terminate in $O(|w_n|_A)$ time. Its output is an orbit-minimal class $[v_n]\in\mathcal M([w_n])$ with $\|v_n\|_A=O(|w_n|_A)$. Since $|w_n|_A\le Cn$, this gives the $O(n)$ estimate underlying (a)(1) and (b)(1).

Now let $u\ne1$ be arbitrary. The speeded-up minimization of $u$ includes the ordinary branch and hence terminates in $O(|u|_A^2)$ time, producing some $[v]\in\mathcal M([u])$. If $\|v_n\|_A\ne\|v\|_A$, automorphic inequivalence is already decided. If the two minimal lengths agree, (6.4) says that the stabilization component
\[
V\mathcal T_{\|v_n\|_A}[v_n]=\mathcal M([w_n])
\]
has at most $M$ vertices. Its construction and the final membership test take $O(\|v_n\|_A)=O(|w_n|_A)$ time. Thus the total running time is
\[
O\bigl(\max\{|w_n|_A,|u|_A^2\}\bigr),
\]
and tameness gives the stated bound $O(\max\{n,|u|_A^2\})$. The case $u=1$ is immediate. This proves the deterministic estimate underlying (a)(2) and (b)(2).

If $n$ and $m$ are good for two trajectories, the two speeded-up minimizations take $O(|w_n|_A)$ and $O(|w_m'|_A)$ time, respectively. The first stabilization component has at most $M$ vertices by (6.4), so the remaining work takes $O(\max\{|w_n|_A,|w_m'|_A\})$. Tameness gives $O(\max\{n,m\})$, proving the deterministic estimate underlying (a)(3).

Choose $K_0$ larger than the finitely many implied constants. The almost-sure eventual goodness in Theorem~\ref{t:A}(a) proves part (a). For (b)(1),(2), apply the same deterministic bounds on the good event and use Theorem~\ref{t:A}(b). For (b)(3), if $G_n$ and $G_m'$ denote the good events for the two copies, then
\[
\Pr(G_{n_i}\cap G_{m_i}')
\ge1-\Pr(G_{n_i}^c)-\Pr((G_{m_i}')^c)\longrightarrow1.
\]
The deterministic two-input estimate on this intersection completes the proof.
\end{proof}

\section{\texorpdfstring{Group random walks as a source of $M$-minimality}{Group random walks as a source of M-minimality}}

\begin{conv}[Terminology regarding random processes]
Let $B$ be a set with the discrete topology (such as a discrete group, the set of vertices of a graph, words in a finite alphabet, etc).
For any infinite sequence of $B$-valued random variables $\mathcal W=W_1, W_2, \dots, W_n, \dots$ we assume that the sample space $\Omega=B^\omega$ (as usual given the product topology for the discrete topologies on the factors $B$)  is a probability space equipped with a Borel probability measure $\Pr$. We will usually suppress the explicit mention of this probability measure $\Pr$.
Thus a trajectory of $\mathcal W$ is a sequence $\zeta=(w_1, w_2,\dots, w_n, \dots)\in\Omega$, where all $w_i\in B$. We say that some property $\mathcal E$ \emph{holds  for almost every trajectory} of $\mathcal W$ if \[\Pr\bigl(\{\zeta\in\Omega:\zeta\text{ satisfies }\mathcal E\}\bigr)=1.\]
\end{conv}

\begin{conv}
For a discrete probability measure $\mu:G\to [0,1]$ on a group $G$, we denote by $\sm$ the subsemigroup of $G$ generated by the support $\supp(\mu)$ of $\mu$. Note that we have $\sm=\cup_{n=1}^\infty \supp(\mu^{(n)})$ where $\mu^{(n)}$ is the $n$-fold convolution of $\mu$.
Thus for $g\in G$ we have $g\in \sm$ if and only if there exist $n\ge 1$ and $g_1,\dots, g_n\in G$ such that $g=g_1\dots g_n$ and $\mu(g_i)>0$ for $i=1,\dots, n$.
\end{conv}

\begin{defn}[Group random walk]
Let $G$ be a group and let $\mu:G\to [0,1]$ be a  discrete probability measure on $G$. Let $X_1, X_2, \dots, X_n,\dots$ be a sequence of $G$-valued i.i.d. random variables, where each $X_i$ has distribution $\mu$. Put $W_n=X_1\dots X_n\in G$, where $n=1,2\dots$. The random process
\[
\mathcal W=W_1,W_2,\dots, W_n,\dots
\]
is called the \emph{random walk on $G$} defined by $\mu$.
\end{defn}

Recall that if $G$ is a group acting on a set $X$, and $\mu$ is a discrete probability measure on $G$, then a measure $\lambda$ on $X$ is called \emph{$\mu$-stationary} if $\lambda=\sum_{g\in G} \mu(g) g\lambda$.

If $G$ is a non-elementary word-hyperbolic group, a discrete probability measure $\mu$ on $G$ is called \emph{non-elementary} if $\sm$ contains some two independent loxodromic elements of $G$ (which, for a word-hyperbolic $G$ means some two elements $g_1,g_2\in G$ of infinite order such that $\langle g_1\rangle\cap \langle g_2\rangle=\{1\}$).

We need the following well-known fact (see, e.g. \cite[Theorem~1.1]{MT18} for the most general version of this statement for random walks on groups acting on Gromov-hyperbolic spaces; see \cite[Theorem 7.6]{Kaim00} specifically for the case of a word-hyperbolic $G$):
\begin{prop}\label{p:ne}
Let $G$ be a non-elementary word-hyperbolic group and let $\mu$ be a non-elementary discrete probability measure on $G$. Let $\mathcal W=W_1,W_2,\dots, W_n,\dots$ be the random walk on $G$ defined by $\mu$. Then:
\begin{enumerate}
\item For almost every trajectory $w_1,w_2,\dots $ of $\mathcal W$ there exists $x\in\partial G$ such that $\lim_{n\to\infty} w_n=x$ in $G\cup \partial G$.
\item Putting, for $S\subseteq \partial G$, $\lambda(S)$ to be the probability that a trajectory of $\mathcal W$ converges to a point of $S$, defines a $\mu$-stationary Borel probability measure $\lambda$ on $\partial G$.
\end{enumerate}
\end{prop}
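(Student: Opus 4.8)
The plan is to run the standard Furstenberg-type argument for random walks on groups acting non-elementarily on a Gromov-hyperbolic space; in fact the statement is contained in \cite[Theorem~1.1]{MT18} and in \cite[Theorem~7.6]{Kaim00}, and I would indicate only the main steps. Since $G$ acts on itself by left translations, giving a geometric action on its (Gromov-hyperbolic) Cayley graph, $G$ acts by homeomorphisms on the compact metrizable space $\partial G$. The hypothesis that $\mu$ is non-elementary says precisely that $\sm$ contains two independent loxodromic elements; hence the subgroup it generates acts non-elementarily on $\partial G$, with no finite orbit.

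First I would produce a $\mu$-stationary measure: the map $\lambda\mapsto \mu*\lambda=\sum_{g\in G}\mu(g)\,g_*\lambda$ is an affine continuous self-map of the weak-$*$ compact convex set $\mathcal P(\partial G)$ of Borel probability measures on $\partial G$, so the Markov--Kakutani fixed point theorem yields a $\mu$-stationary $\lambda\in\mathcal P(\partial G)$. I would then record that $\lambda$ is non-atomic: an atom of maximal $\lambda$-weight would have to lie in a finite $\supp(\mu)$-invariant subset of $\partial G$, contradicting non-elementarity.

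Next I would prove convergence to the boundary. Fixing the basepoint $o=e$, the measure-valued process $\lambda_n:=(W_n)_*\lambda$ is a martingale for the natural filtration, since $\mathbb E[(W_{n+1})_*\lambda\mid\mathcal F_n]=(W_n)_*(\mu*\lambda)=\lambda_n$; applying martingale convergence to $\langle\lambda_n,f\rangle$ for $f$ ranging over a countable dense subset of $C(\partial G)$ gives $\lambda_n\to\lambda_\infty$ weakly almost surely, for a random $\lambda_\infty\in\mathcal P(\partial G)$. Using that the $\partial G$-action of $\sm$ is strongly proximal (loxodromic elements contract all of $\partial G$ except one point arbitrarily close to their attracting endpoint), one deduces that $\lambda_\infty$ is almost surely a single Dirac mass $\delta_{\xi_+}$ with $\xi_+=\xi_+(\omega)\in\partial G$, and non-atomicity of $\lambda$ upgrades the weak convergence $(W_n)_*\lambda\to\delta_{\xi_+}$ to $W_n\to\xi_+$ in $G\cup\partial G$, via a shadow/half-space estimate showing that otherwise $(W_n)_*\lambda$ would retain positive mass away from $\xi_+$; this is part (1), with $p=\xi_+$. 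For part (2) I would set $\mathrm{bnd}(\omega):=\lim_n W_n$, and for $f\in C(\partial G)$ combine the martingale identity $\mathbb E[\langle\lambda_n,f\rangle]=\langle\lambda,f\rangle$ with bounded convergence to get $\mathbb E[f(\mathrm{bnd})]=\langle\lambda,f\rangle$; thus the law of $\mathrm{bnd}$, which is exactly the set function $S\mapsto\lambda(S)$ in the statement, is the $\mu$-stationary measure $\lambda$, so it is a Borel probability measure and is $\mu$-stationary (and this also shows the stationary measure is unique).

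I expect the main obstacle to be the step asserting that $\lambda_\infty$ is almost surely a Dirac mass --- equivalently, strong proximality of the boundary action of the semigroup generated by $\supp(\mu)$ --- which is exactly where non-elementarity is indispensable, since it excludes an invariant pair of points or a finite invariant set; in the word-hyperbolic setting this contraction property is geometric and elementary, but it is the one genuinely geometric input. An alternative route for part (1) that avoids Furstenberg boundary theory is to show directly that the Gromov products $(W_n\mid W_m)_o\to\infty$ almost surely, using positivity of the drift $\ell=\lim_n|W_n|_A/n>0$ together with a sublinear-tracking estimate; positivity of the drift again rests on non-elementarity.
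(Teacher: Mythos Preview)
Your proposal is correct and goes beyond what the paper does: the paper gives no proof at all of this proposition, simply citing it as a well-known fact with references to \cite[Theorem~1.1]{MT18} and \cite[Theorem~7.6]{Kaim00}, exactly the two sources you invoke. Your Furstenberg--Kaimanovich martingale sketch is the standard argument behind those references, so there is nothing to compare; if anything, you have supplied what the paper omits.
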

This measure $\lambda$ is called the \emph{exit measure} or the \emph{hitting measure} for $\mathcal W$.

Recall also that if $G$ is a word-hyperbolic group and $H\le G$ is a non-elementary subgroup, then $\partial G$ contains a unique nonempty minimal closed $H$-invariant subset $\Lambda(H)\subseteq \partial G$ called the \emph{limit set of $H$} (see \cite{KS96,KB02} for details).

We need the following fact which appears to be folklore, although it does not seem to appear in the literature. We include a proof, explained to us by Vadim Kaimanovich, for completeness.

\begin{prop}\label{p:supp}
Let $G$ be a non-elementary word-hyperbolic group, let $\mu$ be a non-elementary discrete probability measure on $G$, and let $\lambda$ be the exit measure on $\partial G$ for the random walk on $G$ defined by $\mu$.

Suppose $H\le G$ is a non-elementary subgroup such that $H\subseteq \sm$. Then $\Lambda(H)\subseteq \supp(\lambda)$.

In particular if $\Lambda(H)=\partial G$ then $\supp(\lambda)=\partial G$.
\end{prop}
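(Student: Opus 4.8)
The plan is to prove that $\Lambda(H)\subseteq\supp(\lambda)$ by exploiting three facts: that $\supp(\lambda)$ is a closed nonempty set, that $\supp(\lambda)$ is $\sm$-invariant (in a suitable sense coming from $\mu$-stationarity), and that $\Lambda(H)$ is the unique minimal closed $H$-invariant subset of $\partial G$. The ``in particular'' clause is then immediate: if $\Lambda(H)=\partial G$, then $\partial G=\Lambda(H)\subseteq\supp(\lambda)\subseteq\partial G$, forcing $\supp(\lambda)=\partial G$.

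First I would establish the key invariance property: if $g\in\supp(\mu)$ then $g\cdot\supp(\lambda)\subseteq\supp(\lambda)$. This follows from $\mu$-stationarity $\lambda=\sum_{h\in G}\mu(h)h\lambda$: if $U$ is open with $\lambda(U)=0$, then $0=\lambda(U)=\sum_h\mu(h)\lambda(h^{-1}U)\ge\mu(g)\lambda(g^{-1}U)$, and since $\mu(g)>0$ we get $\lambda(g^{-1}U)=0$; thus the complement of $\supp(\lambda)$ is ``$g^{-1}$-moved into itself'', i.e. $g^{-1}(\partial G\setminus\supp(\lambda))\subseteq\partial G\setminus\supp(\lambda)$, which rearranges to $g\cdot\supp(\lambda)\subseteq\supp(\lambda)$. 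Iterating over the generators of the semigroup, I get $s\cdot\supp(\lambda)\subseteq\supp(\lambda)$ for every $s\in\sm$. Since $H\subseteq\sm$ and $H$ is a group (so $H=H\inv$), this gives $h\cdot\supp(\lambda)\subseteq\supp(\lambda)$ and also $h\inv\cdot\supp(\lambda)\subseteq\supp(\lambda)$ for every $h\in H$, hence $h\cdot\supp(\lambda)=\supp(\lambda)$; that is, $\supp(\lambda)$ is a nonempty closed $H$-invariant subset of $\partial G$.

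Next I would intersect with the limit set. The set $C:=\supp(\lambda)\cap\Lambda(H)$ is closed and $H$-invariant. To invoke minimality of $\Lambda(H)$ and conclude $C=\Lambda(H)$, I need $C\ne\varnothing$. For this, pick any loxodromic $h\in H$ of infinite order (these exist since $H$ is non-elementary); its attracting fixed point $h^{+\infty}$ lies in $\Lambda(H)$. Taking any $\xi\in\supp(\lambda)$ with $\xi\ne h^{-\infty}$, the sequence $h^n\xi$ lies in $\supp(\lambda)$ (by $H$-invariance) and converges to $h^{+\infty}$; since $\supp(\lambda)$ is closed, $h^{+\infty}\in\supp(\lambda)$, so $h^{+\infty}\in C$ and $C\ne\varnothing$. (One should check $\supp(\lambda)$ has at least two points, or simply that it is not $\{h^{-\infty}\}$; since $\lambda$ is non-atomic for a non-elementary measure, or since $\supp(\lambda)$ is $H$-invariant and $H$ is non-elementary, this is routine.) Then minimality of $\Lambda(H)$ gives $\Lambda(H)=C\subseteq\supp(\lambda)$, as desired.

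The main obstacle is the bookkeeping around the invariance direction: stationarity naturally pushes the \emph{null} sets of $\lambda$ around under $h^{-1}$, and one has to be careful to translate this correctly into forward invariance $g\cdot\supp(\lambda)\subseteq\supp(\lambda)$ for $g\in\supp(\mu)$, and then to promote ``$\supp(\mu)$-invariance'' to ``$\sm$-invariance'' and finally, using that $H$ is a subgroup contained in $\sm$, to genuine two-sided $H$-invariance of $\supp(\lambda)$. Once $\supp(\lambda)$ is known to be a nonempty closed $H$-invariant set, the rest is the standard fact that such a set must contain $\Lambda(H)$ (nonemptiness of the intersection via a loxodromic North-South dynamics argument, then minimality). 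I would present the loxodromic-limit-point argument explicitly since it is the only genuinely dynamical input.
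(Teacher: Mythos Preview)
Your proposal is correct and follows essentially the same approach as the paper: both arguments use $\mu$-stationarity to show $\supp(\lambda)$ is a nonempty closed $H$-invariant subset of $\partial G$, then invoke the minimality of $\Lambda(H)$. The paper phrases the invariance step via absolute continuity (``$\lambda$ dominates $g\lambda$ for $g\in\sm$, hence $\lambda$ and $h\lambda$ are in the same measure class for $h\in H$'') and cites the containment $\Lambda(H)\subseteq\supp(\lambda)$ as an immediate consequence of the definition of the limit set, whereas you work directly with supports and unpack the loxodromic North--South dynamics explicitly; these are cosmetic differences.
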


\begin{proof}
Let $\lambda$ be the exit measure on $\partial G$ for the random walk determined by $\mu$.
For any $k\ge 1$, the measure $\lambda$ is also an exit measure for the random walk based on $\mu^{(k)}$, and therefore $\lambda$ is  $\mu^{(k)}$-stationary.
Thus for every $n\ge 1$ we have $\lambda=\sum_{g\in G} \mu^{(n)}(g)\cdot  g\lambda$.  Hence $\lambda$ dominates $g\lambda$ whenever $n\ge 1$ and $\mu^{(n)}(g)>0$, that is, whenever $g\in\sm$. Since $H\subseteq \sm$, it follows that $\lambda$ dominates $h\lambda$ for every $h\in H$. Since $H$ is a subgroup of $G$, this implies that for all $h\in H$ the measures $\lambda$ and $h\lambda$ are in the same measure class. Hence for every $h\in H$ $\supp(\lambda)=h\supp(\lambda)$. Thus $\supp(\lambda)$ is a nonempty closed $H$-invariant subset of $\partial G$, and therefore $\Lambda(H)\subseteq \supp(\lambda)$, as claimed.
\end{proof}
Note that if $\sm$ contains a subgroup $H$ of $G$ such that $H$ has finite index in $G$, or such that $H$ is an infinite normal subgroup of $G$, then $\Lambda(H)=\partial G$ (see \cite{KS96}) and therefore we get $\supp(\lambda)=\partial G$ in the conclusion of Proposition~\ref{p:supp}.

\begin{thm}\label{t:rwa}
Let $F_N=F(A)$ be a free group of rank $N\ge2$, and let $\mu:F_N\to[0,1]$ be a finitely supported probability measure such that $\langle\supp(\mu)\rangle_+=F_N$. Let $\mathcal W=W_1,W_2,\dots$ be the random walk defined by $\mu$. Then $\mathcal W$ is tame and is adapted to a filling current $0\ne\nu\in\CN$.
\end{thm}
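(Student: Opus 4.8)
The plan is to deduce the theorem from the general theory of random walks on word-hyperbolic groups (precisely as anticipated in the introduction), using Gekhtman's theorem to produce the limiting current and the full-support criterion, Proposition~\ref{p:fs}, to see that it is filling.

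\textbf{Tameness.} This is immediate: since $\mu$ is finitely supported, $C:=\max\{|g|_A\mid \mu(g)>0\}<\infty$, and for every realization of the walk $|W_n|_A=|X_1\cdots X_n|_A\le \sum_{i=1}^n|X_i|_A\le Cn$ for all $n\ge 1$. So $\mathcal W$ is tame.

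\textbf{Setting up the walk.} Next I would record that $\FN$ is a non-elementary word-hyperbolic group and that $\mu$ is a non-elementary measure: since $\sm=\FN$ and $N\ge 2$, the semigroup $\sm$ contains the two independent loxodromic elements $a_1,a_2$ (both of infinite order, with $\langle a_1\rangle\cap\langle a_2\rangle=\{1\}$). Hence Proposition~\ref{p:ne} applies: a.e.\ trajectory $w_1,w_2,\dots$ of $\mathcal W$ converges to a point of $\partial \FN$, and the associated exit measure $\lambda$ is a $\mu$-stationary Borel probability measure on $\partial \FN$. Applying Proposition~\ref{p:supp} with $H=\FN\subseteq\sm$ (and $\Lambda(\FN)=\partial\FN$) then gives that $\lambda$ has full support: $\supp(\lambda)=\partial\FN$.

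\textbf{Convergence to a current.} Now I would invoke Gekhtman's theorem~\cite[Theorem~1.5]{Ge17}. The group $\FN$ acts freely, cocompactly and isometrically on its Cayley tree $T_A$, which is a $0$-hyperbolic, hence $CAT(-1)$, proper metric space (geodesic triangles in a tree are tripods, so the comparison inequality is trivial). Gekhtman's theorem then produces a nonzero current $0\ne\nu\in\CN$ — the ``hitting current'' attached to $\lambda$ and to the combinatorial metric on $T_A$ — such that for a.e.\ trajectory one has $\tfrac1n\,\eta_{w_n}\to\nu$ in $\CN$. (The limit is nonzero because the walk has positive drift, so $\|w_n\|_A/n$ stays bounded away from $0$; equivalently $\|\nu\|_{T_A}>0$ since $T_A\in\cvn$ has empty dual lamination.) Since the projectivization map $\CN\setminus\{0\}\to\PCN$ is continuous and scale-invariant, $[\eta_{w_n}]=\big[\tfrac1n\eta_{w_n}\big]\to[\nu]$ in $\PCN$ for a.e.\ trajectory, which is precisely the assertion that $\mathcal W$ is $\nu$-adapted in the sense of Definition~\ref{d:adapt}.

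\textbf{$\nu$ is filling.} It remains to show $\nu$ is filling. From Gekhtman's description, the hitting current $\nu$ is, up to a continuous positive density, the $\FN$-invariant measure on $\dd$ built from $\lambda\otimes\lambda$; hence $\supp(\nu)$ contains $\{(x,y)\in\dd\mid x,y\in\supp(\lambda)\}$. Since $\supp(\lambda)=\partial\FN$ by the previous step and $\supp(\nu)$ is closed in $\dd$, this forces $\supp(\nu)=\dd$, i.e.\ $\nu$ has full support; by Proposition~\ref{p:fs}, $\nu$ is filling, completing the proof. (If one prefers to avoid analyzing $\supp(\nu)$ directly: a ray-tracking argument identifies the limiting infinite reduced word $w_\infty\in\partial\FN$ of the walk, whose law is $\lambda$; as $\lambda$ has full support, $w_\infty$ a.s.\ contains every fixed $v\in F(A)\setminus\{1\}$ with positive asymptotic frequency, so $\langle v,\nu\rangle=\lim_n\tfrac1n\langle v,w_n\rangle>0$ for all such $v$, again giving full support; one may also reach the same conclusion more cheaply via Corollary~\ref{c:z} or Proposition~\ref{p:ai} once one knows $\langle a_i^n,\nu\rangle>0$ and $\langle(a_ia_j)^n,\nu\rangle>0$.)

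\textbf{Main obstacle.} The one genuinely delicate point is the invocation of Gekhtman's result: one must check that $\FN\curvearrowright T_A$ meets the hypotheses of \cite[Theorem~1.5]{Ge17} and extract from its statement (or proof) both the a.e.\ identification of the limit $\nu$ and enough information relating $\supp(\nu)$ to $\supp(\lambda)$ to conclude filling-ness. Everything else — tameness, non-elementarity of $\mu$, full support of $\lambda$ via Proposition~\ref{p:supp}, and the passage from $\tfrac1n\eta_{w_n}\to\nu$ to $[\eta_{w_n}]\to[\nu]$ — is routine.
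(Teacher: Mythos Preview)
Your overall strategy coincides with the paper's: tameness is trivial, then Gekhtman's theorem produces the limiting current $\nu$, and one concludes by showing $\supp(\nu)=\dd$ and invoking Proposition~\ref{p:fs}. However, there is a genuine gap in your ``$\nu$ is filling'' step.

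The measure class that Gekhtman's theorem attaches to $\nu$ is not $\lambda\otimes\lambda$ but $\check\lambda\times\lambda$, where $\check\lambda$ is the exit measure of the \emph{reflected} walk driven by $\check\mu(g):=\mu(g^{-1})$. (This asymmetry is forced: a point of $\dd$ records a backward and a forward endpoint, and the backward endpoint is governed by the inverse walk.) Consequently, to get $\supp(\nu)=\dd$ you must show $\supp(\check\lambda)=\partial F_N$ as well as $\supp(\lambda)=\partial F_N$. You only established the latter. Proposition~\ref{p:supp} applied to $\check\mu$ would give the former \emph{provided} $\langle\supp(\check\mu)\rangle_+=F_N$, but this is not automatic from the hypothesis $\sm=F_N$ and requires a short argument: if $\supp(\mu)=\{g_1,\dots,g_r\}$ and each $a_i^{\pm 1}$ equals some positive word $U(g_1,\dots,g_r)$, then taking the \emph{reverse} word in $g_1^{-1},\dots,g_r^{-1}$ expresses $a_i^{\mp 1}$ as a positive word in $\supp(\check\mu)$, whence $\langle\supp(\check\mu)\rangle_+=F_N$. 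This is precisely the step the paper supplies and that your proposal omits.

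Your parenthetical fallback (``ray-tracking'': $w_\infty$ a.s.\ contains every $v$ with positive asymptotic frequency because $\lambda$ has full support) does not close the gap either: full support of $\lambda$ says every cylinder has positive measure, which is a statement about \emph{prefixes} of $w_\infty$, not about asymptotic frequencies of subwords along a typical ray; and in any case $w_n$ differs from the length-$n$ prefix of $w_\infty$ by cancellations, so the passage to $\langle v,\nu\rangle>0$ is not justified.
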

\begin{proof}
Let $T_A$ be the Cayley tree of $F_N$ with respect to $A$. Since $\mu$ has finite support,
\[
C:=\max\{|g|_A:\mu(g)>0\}<\infty,
\]
and $|W_n|_A\le Cn$ for every $n$. Thus $\mathcal W$ is tame.

The semigroup hypothesis implies that $\mu$ is non-elementary. Let
\[
\check\mu(g)=\mu(g^{-1}).
\]
Since the inverse of a product reverses the order of its factors,
\[
\langle\supp(\check\mu)\rangle_+
=\{g^{-1}:g\in\langle\supp(\mu)\rangle_+\}
=F_N.
\]
Thus $\check\mu$ is non-elementary as well. Let $\lambda$ and $\check\lambda$ be the hitting measures of the $\mu$- and $\check\mu$-walks. Applying Proposition~\ref{p:supp} with $H=F_N$, whose limit set is $\partial F_N$, gives
\[
\supp(\lambda)=\supp(\check\lambda)=\partial F_N.
\tag{7.1}
\]

The action of $F_N$ on $T_A$ is proper and cocompact, hence convex cocompact. The measure $\mu$ has finite second moment, and Gekhtman's Axiom~1.4 holds for convex-cocompact actions and finitely supported measures. Therefore \cite[Theorem~1.5]{Ge17} applies. For almost every trajectory, $W_n$ is loxodromic for all sufficiently large $n$, and the normalized arclength measures $D_{W_n}$ on the corresponding oriented closed geodesics in $T^1(T_A/F_N)$ converge weakly to the harmonic invariant probability measure $m$.

Let $\mathcal I_A$ be the current--flow correspondence in (4.1), and let $\bar\nu\in\Curr^+(F_N)$ be the nonzero oriented current determined by
\[
\mathcal I_A(\bar\nu)=m.
\]
The $F_N$-invariant lift of $m$ to $T^1(T_A)\cong\partial^2F_N\times\R$ is, under the suspension correspondence, $\bar\nu\times dt$. Axiom~1.4 says that this lift is in the measure class of
\[
\check\lambda\times\lambda\times dt.
\]
Uniqueness of the disintegration along flow lines therefore gives
\[
\bar\nu\asymp
(\check\lambda\times\lambda)|_{\partial^2F_N},
\tag{7.2}
\]
where $\asymp$ denotes equality of measure classes.

For every sufficiently large $n$, (4.1) gives
\[
\mathcal I_A\left(\frac{\eta_{W_n}^+}{\|W_n\|_A}\right)=D_{W_n}.
\]
Since $\mathcal I_A$ is a homeomorphism, Gekhtman's convergence implies that, for almost every trajectory,
\[
\frac{1}{\|W_n\|_A}\eta_{W_n}^+
\longrightarrow\bar\nu
\]
in the weak-* topology on oriented currents. The continuous symmetrization map then gives
\[
\frac{1}{\|W_n\|_A}\eta_{W_n}
=\operatorname{Sym}\left(\frac{1}{\|W_n\|_A}\eta_{W_n}^+\right)
\longrightarrow
\nu:=\bar\nu+\varpi_*\bar\nu.
\tag{7.3}
\]
Consequently $[\eta_{W_n}]\to[\nu]$, and $\mathcal W$ is adapted to the usual flip-invariant current $\nu$.

By (7.2), the current $\nu$ is in the measure class of
\[
(\check\lambda\times\lambda)+(\lambda\times\check\lambda)
\quad\text{on }\partial^2F_N.
\]
In view of (7.1), every nonempty open subset of $\partial^2F_N$ has positive measure for this symmetrized product measure. Hence $\nu$ has full support. Proposition~\ref{p:fs} now implies that $\nu$ is filling.
\end{proof}

We can now conclude that algebraic and algorithmic conclusions of Theorem~\ref{t:A} and Theorem~\ref{t:A'}  apply in the case of $\mu$-random walks on $F_N$, where $\mu$ has finite support with $\sm=F_N$:

\begin{cor}
Let $F_N=F(A)$ be free of rank $N\ge 2$, with a free basis $A$. Let $\mu:F_N\to [0,1]$ be a finitely supported discrete probability measure such that $\sm=F_N$.

Let $\mathcal W=W_1,\dots, W_n, \dots $ be the  random walk on $F_N$ defined by $\mu$.

Then there exist $M\ge 1$, $0<\epsilon<1$ and $\lambda>1+\epsilon$ and a subset $\mathfrak W\subseteq \Out(F_N)$ with $\#\mathfrak W\le M$ such that the conclusions of Theorem~\ref{t:A} and Theorem~\ref{t:A'} hold for $\mathcal W$ with these choices of $M,\lambda,\epsilon,\mathfrak W$.
\end{cor}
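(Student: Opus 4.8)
The plan is to obtain the corollary as a direct composition of Theorem~\ref{t:rwa} with Theorem~\ref{t:A} and Theorem~\ref{t:A'}, so the work amounts to checking that the hypotheses line up. First I would apply Theorem~\ref{t:rwa} to the measure $\mu$: since $\mu$ is finitely supported with $\sm=F_N$, that theorem produces a filling current $0\ne\nu\in\CN$ such that the random walk $\mathcal W$ is adapted to $\nu$, and it also records that $\mathcal W$ is tame. All of the analytic content---Gekhtman's approximation theorem \cite{Ge17}, the full-support statement for the hitting measures (Proposition~\ref{p:supp}), and the filling criterion of \cite{KL10}---is already absorbed into Theorem~\ref{t:rwa}, so at this stage I only need its output.

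Next I would unpack the invariants of $\nu$. Because $\nu$ is filling, Proposition~\ref{p:KL10} and the corollary following it guarantee that the infimum $J_A(\nu)=\inf D_A(\nu)$ is attained, that the $A$-minimizing set $\Delta_A(\nu)$ is finite and nonempty, and that the distortion threshold $\lambda_A(\nu)>1$ is well defined. I then set $\mathfrak W=\Delta_A(\nu)\subseteq\Out(F_N)$ and $M=M_A(\nu)=\#\mathfrak W\ge 1$, fix any $\lambda$ with $1<\lambda<\lambda_A(\nu)$, and fix any rational $\epsilon$ with $0<\epsilon<1$ and $\lambda>1+\epsilon$ (such $\epsilon$ exists since $\lambda>1$). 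With these choices the hypotheses of Theorem~\ref{t:A} hold literally---$\mathcal W$ is a sequence of $F(A)$-valued random variables, $\nu$ is a filling current, and $\mathcal W$ is adapted to $\nu$---so conclusions (a) and (b) of Theorem~\ref{t:A} hold for $\mathcal W$ with the tuple $(M,\lambda,\epsilon,\mathfrak W)$. Since Theorem~\ref{t:rwa} also gives that $\mathcal W$ is tame, the single extra hypothesis of Theorem~\ref{t:A'} is met, and $\mathfrak W,\nu,\mathcal W$ are exactly ``as in Theorem~\ref{t:A}''; hence the algorithmic conclusions (a) and (b) of Theorem~\ref{t:A'} hold as well, with the same constants.

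I do not expect a genuine obstacle at this level: the corollary is essentially a bookkeeping statement, and the hard work has already been done upstream---chiefly in Theorem~\ref{t:key1}, which relies on continuity of the intersection form on $\PCN$, and in Theorem~\ref{t:rwa}. The only point that deserves a line of care is the consistency of the parameter tuple: one must ensure that the $\lambda$ coming out of Theorem~\ref{t:A} admits at least one admissible $\epsilon$ (it does, as $\lambda>1$) and that Theorem~\ref{t:A'} is applied with precisely the same $M,\lambda,\mathfrak W$, which it is by construction, since the statement of Theorem~\ref{t:A'} refers back to the data of Theorem~\ref{t:A}.
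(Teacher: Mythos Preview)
Your proposal is correct and matches the paper's approach: the paper states this corollary without proof, merely remarking that the conclusions of Theorem~\ref{t:A} and Theorem~\ref{t:A'} now apply once Theorem~\ref{t:rwa} supplies a filling current $\nu$ to which the tame walk $\mathcal W$ is adapted. Your write-up simply unpacks this in slightly more detail (and the rationality of $\epsilon$ is unnecessary, but harmless).
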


\section{Finite-state Markov chains and the frequency measures}

We recall some basic notions and facts regarding finite-state Markov chains here and refer the reader to \cite{DZ98,Ga13,Ki15,Ku18} for proofs and additional details.

\subsection{Finite-state Markov chains.}  Recall that a \emph{finite-state Markov chain, or FSMC} $\mathcal X$ is defined by a finite nonempty set $S$ of \emph{states} and by a family of \emph{transition probabilities} $p_\mathcal X(s,s')\ge 0$, where $s,s'\in S$ such that for every $s\in S$ $\sum_{s'\in S} p_\mathcal X(s,s')=1$.  Then for every integer $n\ge 1$ we also get the \emph{$n$-step transition probabilities} $p_\mathcal X^{(n)}(s,s')$ where $p_\mathcal X^{(1)}(s,s')=p_\mathcal X(s,s')$ and where for $n\ge 2$ and $s,s'\in S$ we have
\[
p_\mathcal X^{(n)}(s,s')=\sum_{s''\in S} p_\mathcal X^{(n-1)}(s,s'')p_\mathcal X(s'',s').
\]
The \emph{sample space} associated with $\mathcal X$ is the product space $S^\N=\{\xi=(s_1,s_2,s_3,\dots, s_n, \dots)| s_i\in S \text{ for } i\ge 1\}$. The set $S$ is given the discrete topology and $S^\N$ is given the corresponding product topology, which makes $S^\N$ a compact metrizable totally disconnected topological space. For $i\ge 1$ we denote by $X_i:S^\N\to S$ the function picking out the $i$-th coordinate of an element of $S^\N$.
The \emph{transition matrix} $M=M(\mathcal X)$ is an $S\times S$ matrix where for $s,s'\in S$ the entry $M(s,s')$ of $M$ is defined as $M(s,s')=p_\mathcal X(s,s')$. Thus $M(\mathcal X)$ is a nonnegative matrix, where the sum of the entries in each row is equal to $1$.  Also, for all $n\ge 1$ and $s,s'\in S$ we have $p_\mathcal X^{(n)}(s,s')=(M^n)(s,s')$.  A FSMC $\mathcal X$ as above is called \emph{irreducible} if for all $s,s'\in S$ there exists $n\ge 1$ such that $p_\mathcal X^{(n)}(s,s')>0$. Thus $\mathcal X$ is irreducible if and only if the nonnegative matrix $M(\mathcal X)$ is irreducible in the sense of Perron-Frobenius theory.

For an FSMC $\mathcal X$, given a \emph{initial probability distribution} $\mu$ on $S$,  we obtain the corresponding \emph{Markov process} $\mathcal X_{\mu}=X_1,\dots, X_n,\dots $ where each $X_i$ is an $S$-valued random variable with probability distribution $\mu_i$ on $S$, where $\mu_1=\mu$ and where for $i\ge 2$ and $s'\in S$ we have $\mu_i(s')=\sum_{s\in S} \mu_{i-1}(s)p_\mathcal X(s,s')$. An initial distribution $\mu$ on $S$ is called \emph{stationary} for $\mathcal X$ if $\mu_i=\mu$ for all $i\ge 1$ (equivalently, if $\mu_2=\mu$).
It is well-known, by the basic result of Perron-Frobenius theory, that if $\mathcal X$ is an irreducible finite-state Markov chain with state set $S$, then there is a unique stationary probability distribution $\mu$ on $S$ for $\mathcal X$, and that it satisfies $\mu(s)>0$ for all $s\in S$. In this case the row vector $(\mu(s))_{s\in S}$ is the unique strictly positive left eigenvector of $M(\mathcal X)$ having $||.||_1$-norm $1$ and eigenvalue $1$. Moreover, $1$ is the Perron--Frobenius eigenvalue of $M(\mathcal X)$; in particular, it is simple and equals the spectral radius.

For an FSMC $\mathcal X$ with state set $S$, a word $w=s_1\dots s_n\in S^n$ of length $n\ge 2$ is called \emph{feasible}  if $p_\mathcal X(s_1,s_2)\dots p_\mathcal X(s_{n-1},s_n)>0$. Also, we consider all words $w=s\in S$ of length $n=1$ to be \emph{feasible}. (Hence every nonempty subword of a feasible word is also feasible). An element $\xi=(s_1,s_2,\dots )\in S^\N$ is \emph{feasible} for $\mathcal X$ if for every $n\ge 1$ the word $s_1\dots s_n$ is feasible. Denote by $(S^\N)_+$ the set of all feasible $\xi\in S^\N$. Also, for every $n\ge 1$ denote by $(S^n)_+$ the set of all feasible $s_1\dots s_n\in S^n$.

For a word $w=s_1\dots s_n\in S^n$ (where $n\ge 2$) put
\[
p_\mathcal X(w):=p_\mathcal X(s_1,s_2)\dots p_\mathcal X(s_{n-1},s_n)
\]

Any initial probability distribution $\mu$ on $S$ defines a Borel probability $\mu_\infty$ via the standard convolution formulas. Namely, if $n\ge 1, s_1,\dots s_n\in S$ then
\[
\mu_\infty \left(Cyl(s_1\dots s_n)\right):=\mu(s_1)p_\mathcal X(s_1,s_2)\dots p_\mathcal X(s_{n-1},s_n)=\mu(s_1)p_\mathcal X(s_1\dots s_n)
\]
where $Cyl(s_1\dots s_n)=\{\xi\in S^\N| X_i(\xi)=s_i \text{ for } i=1,\dots, n\}$.

If $\mu$ is strictly positive on $S$, then the support $\supp(\mu_\infty)$ of $\mu_\infty$ is equal to $(S^\N)_+$. In particular, that is the case if $\mathcal X$ is an irreducible FSMC and $\mu$ is the unique stationary probability distribution on $S$.

\begin{defn}[Occurrences and frequencies]\label{d:occ}
Let $\mathcal X$ be an irreducible finite-state Markov chain with state set $S$.

(1) For a word $w=s_1\dots s_n\in S^n$ (where $n\ge 1$) and an element $s\in S$ we denote by $\langle s,w\rangle$ the number of those $i\in\{1,\dots, n\}$ such that $s_i=s$. We call $\langle s,w\rangle$ the \emph{number of occurrences of $s$ in $w$}. We also put $\theta_s(w)=\frac{\langle s,w\rangle}{|w|}$, where $|w|=n$ is the length of $w$. We call $\theta_s(w)$ the \emph{frequency} of $s$ in $w$.

(2) The above notions can be extended from $s$ to arbitrary nonempty words $v\in S^\ast$ as follows. Let $v=y_1\dots y_m\in S^m$ where $y_j\in S$ for $j=1,\dots, m$. Also denote by $w^\infty$ the semi-infinite word $w^\infty=wwww\dots $. For an arbitrary integer $i\ge 1$ we still denote by $s_i\in S$ the $i$-th letter of $w^\infty$.  Now define $\langle v,w\rangle$ to be the number of $i\in\{1,\dots, n\}$ such that in $w^\infty$ we have $s_i=y_1, s_{i+1}=y_2,\dots, s_{i+m-1}=y_m$. We call $\langle v,w\rangle$ the \emph{number of occurrences of  $v$ in $w$}, and we call $\theta_v(w)=\frac{\langle v,w\rangle}{|w|}$ the \emph{frequency of $v$ in $w$}.
\end{defn}

We record the following immediate corollary of the above definition (which holds since we defined the numbers of occurrences in $w$ cyclically).

\begin{lem}\label{lem:occ}
Let $w\in S^n$ where $n\ge 1$. Then the following hold:
\begin{enumerate}
\item We have $n=|w|=\sum_{s\in S} \langle s,w\rangle$ and $1=\sum_{s\in S} \theta_s(w)$.
\item For every $m\ge 1$ we have $n=|w|=\sum_{v\in S^m} \langle v,w\rangle$ and $1=\sum_{v\in S^m} \theta_v(w)$.
\item For every $m\ge 1$ and every $v\in S^m$ we have
\[
\langle v,w\rangle=\sum_{s\in S} \langle vs,w\rangle=\sum_{s'\in S} \langle s'v,w\rangle.
\]
and
\[
\theta_v(w)=\sum_{s\in S} \theta_{vs}(w)=\sum_{s'\in S} \theta_{s'v}(w).
\]
\end{enumerate}
\end{lem}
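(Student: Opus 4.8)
The plan is to prove all three parts with a single bookkeeping device: read every occurrence count as a count of \emph{starting positions} in the periodic word $w^\infty$, indexed by $\{1,\dots,n\}$, and exploit that $w^\infty$ has period $n$. Write $w=s_1\dots s_n$, and for a nonempty word $v=y_1\dots y_m$ and a position $j\in\{1,\dots,n\}$ let $E_v(j)$ denote the event that $v$ occurs in $w^\infty$ starting at position $j$, i.e. $s_j=y_1,\dots,s_{j+m-1}=y_m$ with the indices read in $w^\infty$; then by Definition~\ref{d:occ} we have $\langle v,w\rangle=\#\{\,j\in\{1,\dots,n\}: E_v(j)\,\}$. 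The one preliminary fact I would record is periodicity: since $w^\infty$ is $n$-periodic, $E_v(j)\iff E_v(j+n)$ for all $j\ge 1$, so $\langle v,w\rangle$ is unchanged when the index range $\{1,\dots,n\}$ is replaced by any $n$ consecutive integers, in particular by $\{2,\dots,n+1\}$.

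Granting that, part (1) is immediate: for each $j\in\{1,\dots,n\}$ there is exactly one $s\in S$, namely $s=s_j$, for which $j$ is counted in $\langle s,w\rangle$, so the index set is partitioned and $\sum_{s\in S}\langle s,w\rangle=n$. Part (2) is the same argument with single letters replaced by length-$m$ blocks: for each $j$ there is a unique $v\in S^m$, the block of $w^\infty$ of length $m$ starting at $j$, for which $j$ is counted in $\langle v,w\rangle$. For the first identity in part (3), note that $E_{vs}(j)$ holds iff $E_v(j)$ holds and $s_{j+m}=s$; since $s_{j+m}$ is always a well-defined element of $S$, the set $\{j:E_v(j)\}$ is partitioned over $s\in S$, giving $\langle v,w\rangle=\sum_{s\in S}\langle vs,w\rangle$. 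For the second identity, $E_{s'v}(j)$ holds iff $s_j=s'$ and $E_v(j+1)$; summing over $s'\in S$ the constraint $s_j=s'$ disappears, so $\sum_{s'\in S}\langle s'v,w\rangle=\#\{\,j\in\{1,\dots,n\}:E_v(j+1)\,\}=\#\{\,k\in\{2,\dots,n+1\}:E_v(k)\,\}=\langle v,w\rangle$ by the periodicity observation. Dividing each of these identities by $|w|=n$ yields the corresponding statements about the frequencies $\theta$.

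I expect no real obstacle here; the only step needing a word of care — and the one I would spell out explicitly — is the wrap-around in the second identity of part (3), where the index set coming out of the computation is naturally $\{2,\dots,n+1\}$ rather than $\{1,\dots,n\}$, and one invokes the $n$-periodicity of $w^\infty$ (equivalently, the fact that occurrences in $w$ are counted cyclically) to see that the two counts agree.
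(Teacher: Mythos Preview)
Your proof is correct and is exactly the explicit version of what the paper leaves implicit: the paper records Lemma~\ref{lem:occ} as ``an immediate corollary of the above definition (which holds since we defined the numbers of occurrences in $w$ cyclically)'' and gives no further argument. Your starting-position bookkeeping and the periodicity observation for the wrap-around in the second identity of part~(3) are precisely the mechanics the paper is alluding to with that parenthetical remark.
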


For a finite-state Markov chain $\mathcal X$ with state set $S$ and an element $\xi=(s_1,s_2,s_3,\dots, s_n, \dots)$ of $S^\N$, we denote $w_n=s_1\dots s_n\in S^n$, where $n\ge 1$.

The strong law of large numbers for finite-state Markov chains implies:

\begin{prop}\label{prop:fr}
Let  $\mathcal X$ be an irreducible  finite-state Markov chain with state set $S$ and let $\mu_0$ be the unique stationary probability distribution on $S$. Let $\mu$ be an arbitrary initial distribution on $S$ defining the corresponding Markov process  $\mathcal X_{\mu}=X_1,\dots, X_n,\dots $. Then the following hold:

\begin{enumerate}
\item For every $s\in S$  and for $\mu_\infty$-a.e. trajectory $\xi=(s_1,s_2,s_3,\dots, s_n, \dots)\in S^\N$ of $\mathcal X_{\mu}$, we have
\[
\lim_{n\to\infty} \theta_s(w_n)=\mu_0(s).
\]
\item For every $0<\epsilon\le 1$ and every $s\in S$
\[
\lim_{n\to\infty} \Pr_{\mu_\infty}( |\theta_s(w_n)-\mu_0(s)|<\epsilon)=1
\]
and the convergence in this limit is exponentially fast as $n\to\infty$.
\end{enumerate}

\end{prop}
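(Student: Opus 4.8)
The plan is to recognize Proposition~\ref{prop:fr} as the classical strong law of large numbers together with the classical large deviation estimate for empirical frequencies along an irreducible finite-state Markov chain, and to reduce it to standard results for chains started from a fixed state. Throughout we note that, because $s\in S$ is a single state, the cyclic convention in Definition~\ref{d:occ} plays no role, so that $\theta_s(w_n)=\frac1n\sum_{i=1}^n\mathbf 1_{\{X_i=s\}}$ exactly; no aperiodicity of $\mathcal X$ is needed for either assertion.

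For part~(1), I would first treat the case where the chain is started from the stationary distribution $\mu_0$. Let $T\colon S^\N\to S^\N$ be the shift. Then $P_{\mu_0}:=(\mu_0)_\infty$ is $T$-invariant, and irreducibility of $\mathcal X$ implies that $(S^\N,P_{\mu_0},T)$ is ergodic. Applying Birkhoff's ergodic theorem to the indicator $f(\xi)=\mathbf 1_{\{X_1(\xi)=s\}}$ gives $\theta_s(w_n)\to\int f\,dP_{\mu_0}=\mu_0(s)$ for $P_{\mu_0}$-a.e.\ trajectory. To remove the assumption $\mu=\mu_0$, I would invoke the regeneration structure: since $S$ is finite and $\mathcal X$ irreducible, the chain is positive recurrent and visits every state infinitely often $\mu_\infty$-a.s.; fixing any state $s_0$, the excursions of the trajectory between successive visits to $s_0$ are i.i.d., with finite mean length $1/\mu_0(s_0)$, and the expected number of visits to $s$ during one such excursion equals $\mu_0(s)/\mu_0(s_0)$. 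The renewal--reward theorem (SLLN for i.i.d.\ excursions) then yields $\theta_s(w_n)\to\mu_0(s)$, $\mu_\infty$-a.e. Alternatively, one observes that $\{\xi:\lim_n\theta_s(w_n)=\mu_0(s)\}$ is a $T$-invariant (indeed tail) event and uses the Markov property to transfer its full $P_{\mu_0}$-measure to full $\mu_\infty$-measure.

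For part~(2), I would condition on the first coordinate: since $S$ is finite, it suffices to prove the exponential bound for $\mathcal X$ started from a fixed state $s_1$, uniformly over the finitely many choices of $s_1$. Fix $\epsilon\in(0,1]$ and $s\in S$. For $t\in\R$ let $M_t$ be the nonnegative $S\times S$ matrix with entries $M_t(s',s'')=p_{\mathcal X}(s',s'')e^{t\mathbf 1_{\{s''=s\}}}$, and let $\rho(t)$ be its Perron--Frobenius eigenvalue. By irreducibility, $\rho$ is analytic and log-convex with $\rho(0)=1$ and $\rho'(0)=\mu_0(s)$, and a standard computation gives $\mathbb E_{s_1}\!\big[e^{t\sum_{i=1}^n\mathbf 1_{\{X_i=s\}}}\big]\le C\,\rho(t)^n$ for a constant $C=C(\mathcal X,t)$ independent of $n$ and $s_1$. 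The Chernoff argument then gives $Pr_{\mu_\infty}\big(|\theta_s(w_n)-\mu_0(s)|\ge\epsilon\big)\le 2C e^{-c(\epsilon)n}$ with $c(\epsilon)=\min\{\sup_{t>0}(t(\mu_0(s)+\epsilon)-\log\rho(t)),\ \sup_{t<0}(t(\mu_0(s)-\epsilon)-\log\rho(t))\}>0$, the positivity being exactly the strict convexity of $\log\rho$ at $t=0$. Equivalently, one may simply invoke the level-$2$ (Donsker--Varadhan) large deviation principle for the empirical measures of an irreducible finite-state Markov chain (see \cite{DZ98}): its rate function $I$ is a good rate function vanishing only at $\mu_0$, so the closed subset $\{\rho:|\rho(s)-\mu_0(s)|\ge\epsilon\}$ of the compact simplex of probability vectors on $S$ has $\inf I>0$, and the LDP upper bound gives the claimed exponentially fast convergence.

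The routine-but-central step is the large deviation estimate in part~(2); everything else is bookkeeping once the stationary-start ergodic theorem and the regeneration argument are in place. I expect no genuine obstacle, since all ingredients are classical; the only point deserving care is the reduction from an arbitrary initial distribution $\mu$ to a fixed starting state, which is handled by conditioning on $X_1$ and using that there are only finitely many possibilities.
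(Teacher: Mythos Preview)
Your proposal is correct and considerably more detailed than what the paper does. The paper does not actually prove Proposition~\ref{prop:fr}; it simply introduces it with the sentence ``The Strong Law of Large numbers applies to a finite-state Markov chain implies:'' and states the result, treating it as a well-known fact (with \cite{DZ98,Ga13,Ki15,Ku18} cited earlier as general references for Markov chains). So the paper's ``proof'' is a bare citation, whereas you supply an honest sketch: Birkhoff plus regeneration for part~(1), and the Perron--Frobenius/Chernoff or Donsker--Varadhan route for part~(2). Both routes you outline are standard and valid, and your remark that the cyclic convention in Definition~\ref{d:occ} is irrelevant for single letters is a useful clarification the paper omits.
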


\subsection{Iterated Markov Chains}

Let $\mathcal X$ be a finite-state Markov chain with state set $S$. Let $k\ge 1$ be an integer. Consider a finite-state Markov chain $\mathcal X[k]$ with the state set $(S^k)_+$ and with transition probabilities defined as follows.  Suppose $s_1\dots s_k\in  (S^k)_+$ and $s\in S$ are such that $p_\mathcal X(s_k,s)>0$ (so that $s_1\dots s_k s\in S^{k+1}$ is feasible for $\mathcal X$, and $s_2\dots s_ks\in (S^k)_+$). Then put $p_{\mathcal X[k]}(s_1\dots s_k, s_2\dots s_ks)=p_\mathcal X(s_k,s)$. Set all other transition probabilities in $\mathcal X[k]$ to be $0$.  Note that we have $\mathcal X[1]=\mathcal X$.

It is not hard to see that if $\mathcal X$ as above is irreducible then for every $k\ge 1$ the FSMC $\mathcal X[k]$ is also irreducible. Moreover, in this case there is a natural canonical homeomorphism between the set of infinite feasible trajectories $(S^\N)_+$ of $\mathcal X$ and the set $\left(((S^k)_+)^\N\right)_+$ of infinite feasible trajectories for $\mathcal X[k]$. Under this homeomorphism a sequence $\xi=(s_1,\dots, s_n\dots)\in (S^\N)_+$ goes to $(v_1,v_2\dots, v_n,\dots)\in  \left(((S^k)_+)^\N\right)_+$ where $v_i=s_is_{i+1}\dots s_{i+k-1}$.
Moreover, if $\mu_0$ is the unique stationary distribution for $\mathcal X$ on $S$ then
\[
\mu_0[k](s_1\dots s_k)=\mu_0(s_1)p_\mathcal X(s_1,s_2)\dots p_\mathcal X(s_{k-1},s_k), \tag{*}
\] where $s_1\dots s_k\in (S^k)_+$, is the unique stationary probability distribution for $\mathcal X[k]$.  Using these facts and the application of Proposition~\ref{prop:fr}, standard results about Markov chains imply the following statement; see \cite[Proposition~3.13]{CMah} for a more detailed version of this statement, with explicit speed of convergence estimates:

\begin{prop}\label{prop:fr1}
Let $\mathcal X$ be an irreducible finite-state Markov chain with state set $S$, let $\mu_0$ be its stationary distribution, and let $\mu$ be any initial distribution. For $k\ge1$, extend
\[
\mu_0[k](s_1\dots s_k)
=\mu_0(s_1)p_\mathcal X(s_1,s_2)\cdots p_\mathcal X(s_{k-1},s_k)
\]
by zero to nonfeasible words. Then, for every $v\in S^k$:
\begin{enumerate}
\item for $\mu_\infty$-almost every trajectory,
\[
\theta_v(w_n)\longrightarrow\mu_0[k](v);
\]
\item for every $\epsilon>0$,
\[
\Pr\bigl(|\theta_v(w_n)-\mu_0[k](v)|<\epsilon\bigr)\longrightarrow1,
\]
and the convergence is exponentially fast.
\end{enumerate}
\end{prop}
\begin{proof}
Apply Proposition~\ref{prop:fr} to the irreducible block chain $\mathcal X[k]$. The ordinary number of length-$k$ occurrences counted by that chain and the cyclic occurrence number in Definition~\ref{d:occ} differ by at most $k-1$. After division by $n$, this discrepancy tends to zero, and the same bounded discrepancy does not affect the exponential convergence in probability.
\end{proof}

\begin{cor}\label{cor:freq}
In the setting of Proposition~\ref{prop:fr1}, for every $m\ge1$ one has
\[
\sum_{v\in S^m}\mu_0[m](v)=1,
\]
and, for every $v\in S^m$,
\[
\mu_0[m](v)
=\sum_{s\in S}\mu_0[m+1](vs)
=\sum_{s\in S}\mu_0[m+1](sv).
\]
\end{cor}
\begin{proof}
Choose a trajectory in the countable intersection of the full-measure sets supplied by Proposition~\ref{prop:fr1} for all finite words, and pass to the limit in Lemma~\ref{lem:occ}(2),(3).
\end{proof}

\subsection{Quasi-inversions}

An irreducible finite-state Markov chain is called a \emph{deterministic cycle} if every state has a unique successor and the corresponding transition probability is $1$. Otherwise it is called \emph{non-deterministic}.

\begin{prop}\label{p:tight}
Let $\mathcal X$ be an irreducible non-deterministic finite-state Markov chain with state set $S$, $\#S\ge2$. Then there exist an integer $q\ge1$ and a number $0<\sigma<1$ with the following property. Let $S'\subseteq S$, let $\iota:S'\to S$ be injective, and let $\mu$ be any initial distribution. Put $r_n=\lfloor\sqrt n\rfloor$. Then, for all sufficiently large $n$,
\[
\Pr\left(
X_1\cdots X_{r_n}
=\bigl(\iota(X_{n-r_n+1})\cdots\iota(X_n)\bigr)^R
\right)
\le \sigma^{\lfloor r_n/q\rfloor},
\tag{8.1}
\]
where the event is declared empty if one of the last $r_n$ states does not belong to $S'$. In particular, the probabilities in (8.1) are summable in $n$.
\end{prop}
\begin{proof}
Because $\mathcal X$ is finite, irreducible, and not a deterministic cycle, there is a uniform bound on the length of a directed path all of whose transition probabilities are $1$: otherwise a repeated state would produce a closed deterministic communicating class, which by irreducibility would be the entire chain. Hence there exists $q\ge1$ such that every feasible block of $q$ consecutive transitions has probability at most some $\sigma<1$.

Condition on $X_1,\dots,X_{n-r_n}$. If the event in (8.1) is possible, it prescribes the next $r_n$ states. Its conditional probability is a product of transition probabilities and is at most $\sigma^{\lfloor r_n/q\rfloor}$. Averaging over the conditioning gives (8.1). The resulting sequence is summable because $r_n\asymp\sqrt n$.
\end{proof}

\begin{cor}\label{c:qi}
In the setting of Proposition~\ref{p:tight}, for almost every trajectory the equality in (8.1) fails for all sufficiently large $n$.
\end{cor}
\begin{proof}
This is the first Borel--Cantelli lemma.
\end{proof}

\begin{rem}
If $\mathcal X$ is a deterministic cycle, every trajectory is periodic. In the graph-based setting below, one period projects to a fixed closed reduced and cyclically reduced path. The proof of Theorem~\ref{t:cla} gives a periodic-word argument showing that cyclic reduction removes only a uniformly bounded number of edges from the closed prefixes.
\end{rem}

\section{Graph-based non-backtracking random walks}

\begin{conv}\label{conv:g}
In this section we will assume that $F_N=F(A)$ is a free group of finite rank $N\ge 2$, that $\Gamma$ is a finite connected oriented graph with all vertices of degree $\ge 3$ and with the first Betti number $b(\Gamma)=N$, and that $\alpha: F_N\to \pi_1(\Gamma,x_0)$ is a fixed isomorphism, where $x_0\in V\Gamma$ is some base vertex. We equip $\Gamma$ and $T_0=\widetilde \Gamma$ with simplicial metrics, where every edge has length $1$.
\end{conv}
Note that for $\Gamma$ as above we always have $\#E\Gamma\le 6N$.

\begin{defn}
Under the above convention, a FSMC $\mathcal X$  with state set $S$ is \emph{$\Gamma$-based} if the following hold:

\begin{enumerate}
\item We have $S\subseteq E\Gamma$, with $\#S\ge 2$.
\item Whenever $e,e'\in S$ are such that $p_\mathcal X(e,e')>0$ then $t(e)=o(e')$ in $\Gamma$ and $e'\ne e^{-1}$.
\end{enumerate}
\end{defn}

Thus for a $\Gamma$-based FSMC $\mathcal X$ as above, the space of feasible trajectories $(S^\N)_+$ can be thought of as a subset of the set $\Omega(\Gamma)$ of all reduced semi-infinite edge-paths $\gamma=e_1,e_2,\dots $ in $\Gamma$. Similarly, $(S^n)_+$ can be thought of as a subset of the set $\Omega_n(\Gamma)$ of all reduced length $n$ edge-paths $e_1,e_2,\dots, e_n$ in $\Gamma$.

\begin{prop}\label{p:n}
Let $\mathcal X$ be an irreducible $\Gamma$-based FSMC with state set $S\subseteq E\Gamma$.  Let $\mu_0$ be the unique stationary probability distribution on $S$.  For every $k\ge 1$ we extend $\mu_0[k]$ to $\Omega_k(\Gamma)$ by setting $\mu_0[k](v)=0$ for every $v\in  \Omega_k(\Gamma)-(S^k)_+$.

  There exists a unique geodesic current $\nu$ on $F_N$ with the following properties:
\begin{enumerate}
\item For every $k\ge 1$ and every $v\in \Omega_k(\Gamma)$ we have $\langle v,\nu\rangle_\Gamma=\mu_0[k](v)+\mu_0[k](v^{-1})$.
\item We have $\langle T_0, \nu\rangle=1$.
\end{enumerate}
\end{prop}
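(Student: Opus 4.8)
The plan is to construct the current $\nu$ directly via its weights and then verify that these weights satisfy the switch condition, so that the second part of the Proposition on simplicial charts and weights (the one stated after the definition of cylinders) produces the desired current. Concretely, for $k\ge 1$ and $v\in\Omega_k(\Gamma)$ define
\[
\langle v,\nu\rangle_\Gamma := \mu_0[k](v)+\mu_0[k](v^{-1}),
\]
with the convention that $\mu_0[k]$ vanishes on paths that are not feasible for $\mathcal X$. The first thing to check is that these are finite nonnegative numbers, which is immediate since $\mu_0[k]$ is a probability distribution on $(S^k)_+\subseteq\Omega_k(\Gamma)$. The main point is to verify the switch condition $(\ddag)$: for every $k\ge 1$ and every $v\in\Omega_k(\Gamma)$,
\[
\langle v,\nu\rangle_\Gamma=\sum_{e\in E\Gamma,\ ve\in\Omega_{k+1}(\Gamma)}\langle ve,\nu\rangle_\Gamma=\sum_{e'\in E\Gamma,\ e'v\in\Omega_{k+1}(\Gamma)}\langle e'v,\nu\rangle_\Gamma.
\]
I would prove the forward-extension identity; the backward one is symmetric (and also follows by applying the forward identity to $v^{-1}$ together with the flip-symmetry built into the definition of the weights). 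Expanding the right-hand side, $\sum_{e:\,ve\in\Omega_{k+1}}\big(\mu_0[k+1](ve)+\mu_0[k+1]((ve)^{-1})\big)$. By Corollary~\ref{cor:freq}(2) applied to the iterated chain (i.e. the identity $\mu_0[k](u)=\sum_{s}\mu_0[k](us)=\sum_{s'}\mu_0[k](s'u)$, which in the $\Gamma$-based language reads as a sum over edges $e$ with $ve$ reduced and feasible), the first sum equals $\mu_0[k](v)$ and the second equals $\mu_0[k](v^{-1})$; summing gives $\langle v,\nu\rangle_\Gamma$. One subtlety to address carefully: the sum in $(\ddag)$ ranges over all $e\in E\Gamma$ with $ve$ a reduced edge-path in $\Gamma$, whereas the Markov-chain identity only sees transitions with positive probability. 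But $\mu_0[k+1]$ is supported on feasible paths, so the extra terms (reduced but not feasible $ve$) contribute zero, and the two sums agree. This is the step I expect to require the most care, essentially bookkeeping between the combinatorics of $\Gamma$ and the support of $\mu_0[k]$, but it is routine.

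Given the switch condition, the cited structure result yields a unique $\nu\in\CN$ with $\langle v,\nu\rangle_\Gamma=\mu_0[k](v)+\mu_0[k](v^{-1})$ for all $v\in\Omega_\ast(\Gamma)$, establishing (1); uniqueness of $\nu$ is part of that result since a current is determined by its weights on $\Omega_\ast(\Gamma)$. It remains to compute $\langle T_0,\nu\rangle$ and show it equals $1$. For this I would use that $\langle T_0,\nu\rangle$ is the sum of the weights over the edges of $\Gamma$ counted appropriately: more precisely, for a simplicial tree $T_0=\widetilde\Gamma$ with all edges of length $1$, the intersection number $\langle T_0,\nu\rangle$ equals $\sum_{e\in E^+\Gamma}\langle e,\nu\rangle_\Gamma$ where $E^+\Gamma$ is a choice of one orientation per topological edge (this is the standard formula expressing the $T_0$-length of a current as the total weight it puts on the edges of $\Gamma$, see \cite{Ka06}). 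Then
\[
\langle T_0,\nu\rangle=\sum_{e\in E^+\Gamma}\big(\mu_0[1](e)+\mu_0[1](e^{-1})\big)=\sum_{e\in E\Gamma}\mu_0[1](e)=\sum_{s\in S}\mu_0(s)=1,
\]
where the last equality uses that $\mu_0$ is a probability distribution on $S$ and $\mu_0[1]=\mu_0$ extended by zero on $E\Gamma\setminus S$. This proves (2) and completes the proof.

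One alternative I considered for (2) is to invoke homogeneity and normalization of the intersection form more abstractly, but the direct edge-weight computation is cleaner and self-contained given the conventions already set up in the excerpt. I would also double-check that the normalization $\langle T_0,\nu\rangle=1$ is consistent with (1) at level $k=1$ via Lemma~\ref{lem:occ}(1)/Corollary~\ref{cor:freq}(1), which it is. The only genuine content, as noted, is the switch-condition verification; everything else is normalization bookkeeping.
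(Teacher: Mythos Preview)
Your proposal is correct and follows essentially the same approach as the paper: define the weights by the symmetrized formula, invoke Corollary~\ref{cor:freq} to check the switch condition, and compute $\langle T_0,\nu\rangle$ by summing edge weights. Your version is in fact more careful than the paper's, which does not spell out the feasible-versus-reduced bookkeeping or the precise edge-sum normalization (the paper writes $\tfrac12\sum_{e\in E\Gamma}$ where you write $\sum_{e\in E^+\Gamma}$, but these are the same quantity).
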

\begin{proof}
We use the formulas in part (1) of the proposition to define a system of  weights $\nu$ on $\cup_{n\ge 1} \Omega_n(\Gamma)$.
Note that these weights are already symmetrized since the defining equations for the weights in (1) give the same answers for $v$ and $v^{-1}$.
Now Corollary~\ref{cor:freq} implies that these $\nu$ weights satisfy the switch conditions. Therefore they do define a geodesic current $\nu\in \CN$.

Also, part (1) of Corollary~\ref{cor:freq} implies that $\sum_{e\in E\Gamma} \mu_0[1](e)=1=\sum_{e\in E\Gamma} \mu_0[1](e^{-1})$. For the unit-edge tree $T_0$, the intersection form is one half of the sum of the weights of all oriented edges. Therefore
\[
\langle T_0, \nu\rangle=\frac{1}{2} \sum_{e\in E\Gamma} (\mu_0[1](e)+\mu_0[1](e^{-1}))=\frac{1}{2} \sum_{e\in E\Gamma}\langle e, \nu\rangle_\Gamma=1.
\]
This proves the normalization in part (2) and, in particular, $\nu\ne0$. Uniqueness follows from the coordinate characterization of currents above.
\end{proof}

\begin{defn}[Characteristic current]
Let $\mathcal X$ be an irreducible $\Gamma$-based FSMC with state set $S\subseteq E\Gamma$.  Let $0\ne\nu\in\CN$ be the geodesic current constructed in Proposition~\ref{p:n} above.
We call $\nu$ the \emph{characteristic current} of $\mathcal X$ and denote it by $\nu_\mathcal X$.
\end{defn}

\begin{defn}[$\mathcal X$-directed random walk on $\Gamma$]
Let $\mathcal X$ be an irreducible $\Gamma$-based FSMC with state set $S\subseteq E\Gamma$.  Let $\mu$ be any initial probability distribution on $S$
defining the corresponding Markov process  $\mathcal X_{\mu}=X_1,\dots, X_n,\dots $. For every $n=1,2,\dots $ put $W_n=X_1\dots X_n$ so that $W_n$ takes values in $S^n$.
The random process $\mathcal W_{\mu}=W_1,\dots, W_n,\dots $ is called the \emph{$\mathcal X$-directed non-backtracking random walk} on $\Gamma$ corresponding to $\mu$.
\end{defn}
Note that for $\mathcal W_{\mu}$ and any $n\ge 1$ the only feasible values of $W_n$ are contained in $S^n\cap \Omega_n(\Gamma)$.

Since in general $\Gamma$ may have more than one vertex, a reduced edge-path in $\Gamma$ (such as, for example, the length-$n$ path given by $W_n$ in the above setting) is not necessarily closed and thus may not define a conjugacy class in $\pi_1(\Gamma,x_0)$. To get around this issue, we modify $\mathcal W_{\mu}$ slightly, in two different ways to output closed paths in $\Gamma$.

\begin{defn}[Closing path system]
Let $\Gamma$ be as in Convention~\ref{conv:g}.  A \emph{closing path system} for $\Gamma$ is a family $\mathcal B=(\beta_{e,e'})_{e,e'\in E\Gamma}$ of reduced edge-paths in $\Gamma$ such that for every $e,e'\in E\Gamma$ $e\beta_{e,e'}e'$ is a reduced edge-path in $\Gamma$.

For a nondegenerate reduced edge-path $\gamma$ in $\Gamma$ define the \emph{$\mathcal B$-closing} $\widehat \gamma$ of $\gamma$ as $\widehat\gamma=\gamma\beta_{e,e'}$ where $e$ is the last edge of $\gamma$ and $e'$ is the first edge of $\gamma$. Note also that for any nondegenerate reduced edge-path $\gamma$ in $\Gamma$ the $\mathcal B$-closing  $\widehat\gamma$ is a reduced and cyclically reduced closed edge-path in $\Gamma$.

\end{defn}
For a closing path system $\mathcal B$ as above, if $e,e'\in E\Gamma$ then $t(e)=o(\beta_{e,e'})$ and $o(e')=t(\beta_{e,e'})$. Such a system always exists. Indeed, the directed non-backtracking edge graph, whose vertices are the elements of $E\Gamma$ and whose arrows are the reduced pairs $ef$, is strongly connected because $\Gamma$ is connected and every vertex has degree at least $3$. A shortest directed path from $e$ to $e'$ supplies the required intermediate path $\beta_{e,e'}$; when $e=e'$, choose a shortest positive directed return path. Such a path can be chosen without repeated directed-edge vertices except for its two endpoints, and hence
\[
|\beta_{e,e'}|\le |E\Gamma|\le6N.
\]

\begin{defn}[$\mathcal B$-closing of a non-backtracking walk on $\Gamma$]
Let $\mathcal X$ be an irreducible $\Gamma$-based FSMC with state set $S\subseteq E\Gamma$.  Let $\mathcal B=(\beta_{e,e'})_{e,e'\in E\Gamma}$ be a closing path system for $\Gamma$.
Let $\mu$ be any initial probability distribution on $S$ and let $\mathcal W_{\mu}=W_1,\dots, W_n,\dots $ be the $\mathcal X$-directed non-backtracking random walk on $\Gamma$ corresponding to $\mu$.
Define the random process $\widehat {\mathcal W_{\mu}}=\widehat W_1,\dots, \widehat W_n,\dots $, where $\widehat W_n$ is the $\mathcal B$-closing of $W_n$.
We call $\widehat {\mathcal W_{\mu}}$ the \emph{$\mathcal B$-closing} of $\mathcal W_{\mu}$.
\end{defn}

An advantage of using $\widehat {\mathcal W_{\mu}}$ is that it always outputs reduced and cyclically reduced closed paths $\widehat W_n$ of length $n\le |\widehat W_n|\le n+C$, where $C=\max_{e,e'}|\beta_{e,e'}|$.  In many natural examples, however, $W_n$ is already closed with positive probability. Therefore we offer a variation of the $\widehat {\mathcal W_{\mu}}$ approach which takes this fact into account.

For a reduced nondegenerate closed edge-path $\gamma$ in $\Gamma$ denote by $\operatorname{cyc}(\gamma)$ the subpath of $\gamma$ obtained from $\gamma$ by a maximal cyclic reduction. Thus $\operatorname{cyc}(\gamma)$ is a nondegenerate closed reduced and cyclically reduced edge-path in $\Gamma$.

\begin{nota}
Let $\mathcal B$ be a closing path system for $\Gamma$. For a nondegenerate reduced edge-path $\gamma$ in $\Gamma$ let $\breve\gamma:=\operatorname{cyc}(\gamma)$ if $\gamma$ is a closed path, and let $\breve\gamma:=\widehat\gamma$ otherwise.  Thus in both cases $\breve\gamma$ is a closed reduced and cyclically reduced edge-path in $\Gamma$ (but it may now have length $<n$).  We call $\breve\gamma$  the \emph{modified $\mathcal B$-closing} of $\gamma$.
\end{nota}

\begin{defn}
Let $\mathcal X$ be an irreducible $\Gamma$-based FSMC with state set $S\subseteq E\Gamma$.  Let $\mathcal B=(\beta_{e,e'})_{e,e'\in E\Gamma}$ be a closing path system for $\Gamma$.
Let $\mu$ be any initial probability distribution on $S$. Let $\mathcal W_{\mu}=W_1,\dots, W_n,\dots $ be the $\mathcal X$-directed non-backtracking random walk on $\Gamma$ corresponding to $\mu$.

Define the random process $\breve {\mathcal W_{\mu}}=\breve W_1,\dots, \breve W_n,\dots $, where $\breve W_n$ is the modified $\mathcal B$-closing of $W_n$.
We call $\breve {\mathcal W_{\mu}}$ the \emph{modified $\mathcal B$-closing} of $\mathcal W_{\mu}$.

\end{defn}

\begin{conv}\label{conv:closedpaths}
For each vertex $x\in V\Gamma$, choose a reduced path $\gamma_x$ from $x_0$ to $x$, with $\gamma_{x_0}$ trivial. If $\rho$ is a closed path based at $x$, define
\[
g_\rho:=\alpha^{-1}\bigl([\gamma_x\rho\gamma_x^{-1}]\bigr)\in F_N,
\]
and choose its freely reduced representative over $A^{\pm1}$. We use the same symbol $\rho$ for this representative when no confusion can arise. A different choice of the paths $\gamma_x$ changes $g_\rho$ by conjugation, so it does not change the counting current $\eta_\rho$ or any statement about automorphic equivalence of conjugacy classes.

Because the marking and the graph are fixed, there exist constants $K_\Gamma\ge1$ and $K_\Gamma'\ge0$ such that every closed edge-path $\rho$ satisfies
\[
|g_\rho|_A\le K_\Gamma|\rho|+K_\Gamma'.
\tag{9.1}
\]
Thus the closed-path processes defined above are canonically regarded, up to harmless conjugacy choices, as $F_N$-valued random processes.
\end{conv}

\begin{thm}\label{t:cl}
Let $\mathcal X$ be an irreducible $\Gamma$-based FSMC, let $\mu$ be any initial distribution, let $\mathcal B$ be a closing path system, and let $\nu_\mathcal X$ be the characteristic current. Then the $\mathcal B$-closing $\widehat{\mathcal W_\mu}$ is tame and adapted to $\nu_\mathcal X$.
\end{thm}
\begin{proof}
Put $C=\max_{e,e'}|\beta_{e,e'}|$. In graph length,
\[
n\le|\widehat W_n|\le n+C.
\]
By (9.1), $|\widehat W_n|_A\le K_\Gamma(n+C)+K_\Gamma'$, so the process is tame.

Fix a trajectory $\xi=e_1e_2\dots$ to which Proposition~\ref{prop:fr1} applies for every finite edge-path, and put $w_n=e_1\dots e_n$. Here $\langle v,w_n\rangle$ denotes the one-direction cyclic occurrence number of Definition~\ref{d:occ}; for a closed cyclically reduced path $z$ one has
\[
\langle v,\eta_z\rangle_\Gamma
=\langle v,z\rangle+\langle v^{-1},z\rangle.
\]
For a reduced path $v$ of length $k$,
\[
\left|\langle v,\widehat w_n\rangle-\langle v,w_n\rangle\right|\le C+2k,
\]
because only occurrences meeting the added closing path or one of its two endpoints can change. The same estimate holds for $v^{-1}$. Hence
\begin{align*}
\lim_{n\to\infty}\frac{\langle v,\eta_{\widehat w_n}\rangle_\Gamma}{n}
&=\lim_{n\to\infty}
\frac{\langle v,w_n\rangle+\langle v^{-1},w_n\rangle}{n}\\
&=\mu_0[k](v)+\mu_0[k](v^{-1})
=\langle v,\nu_\mathcal X\rangle_\Gamma.
\end{align*}
The coordinate characterization of currents gives
\[
\frac1n\eta_{\widehat w_n}\longrightarrow\nu_\mathcal X,
\]
and therefore $[\eta_{\widehat w_n}]\to[\nu_\mathcal X]$.
\end{proof}

\begin{thm}\label{t:cla}
Let $\mathcal X$ be an irreducible $\Gamma$-based FSMC, let $\mu$ be any initial distribution, let $\mathcal B$ be a closing path system, and let $\nu_\mathcal X$ be the characteristic current. Then the modified closing $\breve{\mathcal W_\mu}$ is tame and adapted to $\nu_\mathcal X$.
\end{thm}
\begin{proof}
Put $C=\max_{e,e'}|\beta_{e,e'}|$. Since $|\breve W_n|\le n+C$ in graph length, (9.1) gives
\[
|\breve W_n|_A\le K_\Gamma(n+C)+K_\Gamma',
\]
so the process is tame. Fix a trajectory to which Proposition~\ref{prop:fr1} applies for every finite path, and write $w_n=e_1\cdots e_n$. Whenever $w_n$ is closed, let $c_n$ be the number of pairs of boundary edges removed in passing from $w_n$ to $\operatorname{cyc}(w_n)$.

Suppose first that $\mathcal X$ is non-deterministic. If $c_n\ge\lfloor\sqrt n\rfloor$, then the first $\lfloor\sqrt n\rfloor$ edges are the inverse reverse of the last $\lfloor\sqrt n\rfloor$ edges. Apply Corollary~\ref{c:qi} with $\iota(e)=e^{-1}$ on the subset of states whose inverse also lies in $S$. Almost surely,
\[
c_n<\sqrt n
\]
for every sufficiently large closed $w_n$. Thus
\[
|\operatorname{cyc}(w_n)|\ge n-2\sqrt n.
\]

Suppose now that $\mathcal X$ is a deterministic cycle. After cyclically shifting the period to begin with the first state of the trajectory, write the successive prefixes as $P^qR$, where $P$ is a nontrivial closed reduced and cyclically reduced period and $R$ ranges over the finitely many prefixes of $P$. Fix such an $R$ for which $P^qR$ is closed. We claim that the amount of cyclic cancellation in $P^qR$ is bounded independently of $q$.

Indeed, suppose that the cancellation were unbounded. If $\ell_q$ pairs of boundary edges cancel in $P^qR$, then the initial segment of length $\ell_q$ of the periodic ray $P^\infty$ agrees with the inverse reverse of the terminal segment of length $\ell_q$ of $P^qR$. Removing the fixed terminal path $R$ loses at most $|R|$ letters, and the remaining terminal segment lies in one of the finitely many phases of the periodic word $P^\infty$. Passing to a subsequence for which this phase is fixed and $\ell_q\to\infty$, we obtain agreement of arbitrarily long prefixes of $P^\infty$ with prefixes of a fixed shift of $(P^{-1})^\infty$. Hence these one-sided periodic words agree, and their primitive periods are cyclic conjugates. Thus the primitive cyclic root $Q$ of $P$ is cyclically conjugate to $Q^{-1}$.

The nontrivial element represented by $Q$ would therefore be conjugate to its inverse. Any such conjugator normalizes the maximal cyclic subgroup containing $Q$. In a free group the normalizer of a nontrivial maximal cyclic subgroup is the subgroup itself, so the conjugator centralizes $Q$. It would follow that $Q=Q^{-1}$, and hence $Q^2=1$, contradicting torsion-freeness. Thus cancellation is bounded for each $R$, and the finiteness of the possible remainders makes the bound uniform. Consequently $c_n=O(1)$ along the closed times in the deterministic case. Thus, in either case, $c_n=o(n)$ along every sequence of closed times.

Let $v$ be a reduced path of length $k$. Removing $2c_n=o(n)$ boundary edges changes the cyclic number of occurrences of $v$ by at most $2c_n+2k=o(n)$. Therefore, along every sequence of closed times $n_i$,
\begin{align*}
\lim_{i\to\infty}
\frac{\langle v,\eta_{\breve w_{n_i}}\rangle_\Gamma}{n_i}
&=\lim_{i\to\infty}
\frac{\langle v,w_{n_i}\rangle+\langle v^{-1},w_{n_i}\rangle}{n_i}\\
&=\mu_0[k](v)+\mu_0[k](v^{-1})
=\langle v,\nu_\mathcal X\rangle_\Gamma.
\end{align*}
At nonclosed times $\breve w_n=\widehat w_n$, and the coordinate convergence proved in Theorem~\ref{t:cl} applies. The closed and nonclosed subsequences therefore have the same coordinate limit, so
\[
\frac1n\eta_{\breve w_n}\longrightarrow\nu_\mathcal X
\quad\text{and hence}\quad
[\eta_{\breve w_n}]\longrightarrow[\nu_\mathcal X].
\]
\end{proof}

In summary, we get:

\begin{cor}\label{c:sum}
Let $\mathcal X$ be an irreducible $\Gamma$-based FSMC with state set $S\subseteq E\Gamma$. Let $\mu$ be any initial probability distribution on $S$. Let $\mathcal B=(\beta_{e,e'})_{e,e'\in E\Gamma}$ be a closing path system for $\Gamma$. Let $\nu_\mathcal X\in\CN$ be the characteristic current for $\mathcal X$. Suppose that $\nu_\mathcal X$ is filling in $F_N$.

Then $\widehat{\mathcal W_\mu}$ and $\breve{\mathcal W_\mu}$ are adapted to the characteristic current $\nu_\mathcal X$. Therefore Theorem~\ref{t:A} and Theorem~\ref{t:A'}  apply to $\widehat{\mathcal W_\mu}$ and $\breve{\mathcal W_\mu}$.
\end{cor}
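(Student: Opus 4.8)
The plan is to observe that Corollary~\ref{c:sum} is a direct assembly of Theorem~\ref{t:cl}, Theorem~\ref{t:cla}, Theorem~\ref{t:A} and Theorem~\ref{t:A'}, so essentially all the substantive work has already been carried out in the preceding sections. First I would fix, once and for all, a way of viewing the edge-path-valued processes $\widehat{\mathcal W_\mu}=\widehat W_1,\widehat W_2,\dots$ and $\breve{\mathcal W_\mu}=\breve W_1,\breve W_2,\dots$ as sequences of $F(A)$-valued (equivalently, $\mathcal C_N$-valued) random variables. This uses the marking isomorphism $\alpha\colon F_N\to\pi_1(\Gamma,x_0)$: a nondegenerate reduced and cyclically reduced closed edge-path $w$ in $\Gamma$ determines a well-defined conjugacy class in $F_N$, and since all the objects appearing in the hypotheses and conclusions of Theorem~\ref{t:A} and Theorem~\ref{t:A'} (the counting current $\eta_w$, the length $||w||_A$, the behavior of Whitehead's algorithm, the stabilizer rank) depend only on that conjugacy class, this identification is harmless.

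Next I would record the two key inputs. By Theorem~\ref{t:cl}, $\widehat{\mathcal W_\mu}$ is tame and adapted to $\nu_\mathcal X$; by Theorem~\ref{t:cla}, $\breve{\mathcal W_\mu}$ is tame and adapted to $\nu_\mathcal X$. I would then note the one bookkeeping point: tameness is genuinely a bound of the form $|W_n|_A\le Cn$, whereas Theorem~\ref{t:cl} and Theorem~\ref{t:cla} most directly bound the edge-path length of $W_n$ in $\Gamma$; but $R_A$ and $\Gamma$ are both simplicial charts on $F_N$ and hence $F_N$-equivariantly quasi-isometric, so there is a constant $D\ge 1$ with $||w||_A\le D\cdot(\text{edge-path length of }w\text{ in }\Gamma)$ on conjugacy classes, and tameness transfers (with $C$ replaced by $DC$). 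In fact Definition~\ref{d:adapt}(1) already states that tameness is independent of the choice of free basis, so this is automatic.

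Finally I would invoke the standing hypothesis of the corollary, namely that the characteristic current $\nu_\mathcal X$ is filling in $F_N$. At that point each of $\widehat{\mathcal W_\mu}$ and $\breve{\mathcal W_\mu}$ is precisely what Theorem~\ref{t:A} and Theorem~\ref{t:A'} take as input: a tame sequence of $F(A)$-valued random variables adapted to a filling geodesic current. So I would simply apply those two theorems with $\nu=\nu_\mathcal X$ and read off the stated algebraic and algorithmic genericity conclusions. I do not expect a genuine obstacle here; the only thing requiring any care — and it is entirely routine — is the translation between the ``random non-backtracking walk on $\Gamma$'' picture and the ``conjugacy class in $F(A)$'' picture, making sure the quasi-isometry between the two charts is applied consistently when transporting the length/tameness bounds and when passing from $\widehat W_n,\breve W_n$ to conjugacy classes in $F_N$.
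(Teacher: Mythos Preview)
Your proposal is correct and matches the paper's approach: the paper states Corollary~\ref{c:sum} with no proof beyond the phrase ``In summary, we get,'' treating it as an immediate consequence of Theorem~\ref{t:cl} and Theorem~\ref{t:cla} (which give tameness and adaptedness to $\nu_{\mathcal X}$) together with the filling hypothesis, so that Theorem~\ref{t:A} and Theorem~\ref{t:A'} apply directly. Your extra care about transporting tameness between the charts $\Gamma$ and $R_A$ is more than the paper spells out, but it is correct and harmless.
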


We next explain several situations where one can guarantee that the current $\nu_\mathcal X\in\CN$ is filling.

\begin{prop}\label{p:XF}
Let $\mathcal X$ be an irreducible $\Gamma$-based FSMC with state set $S\subseteq E\Gamma$. Let $\nu_\mathcal X\in \CN$ be the characteristic current for $\mathcal X$.

\begin{enumerate}
\item Suppose that $\mathcal X$ has the property that $S=E\Gamma$ and that for every $e,e'\in E\Gamma$ such that $ee'$ is a reduced edge-path in $\Gamma$ we have $p_\mathcal X(e,e')>0$. Then the current $\nu_\mathcal X\in\CN$ is filling.
\item Suppose that $\Gamma=R_A$, the $N$-rose corresponding to a free basis $A=\{a_1,\dots, a_N\}$ of $F_N$ (so that we can identify $E(R_A)=A^{\pm 1}$).  Suppose that $\mathcal X$ is such $A\subseteq S$ and that for all $1\le i,j\le N$ we have $p_\mathcal X(a_i,a_j)>0$.  Then the current $\nu_\mathcal X\in\CN$ is filling.
\item Suppose there exists a nondegenerate reduced cyclically reduced closed edge-path $w$ in $\Gamma$ such that $w$ represents a filling element of $F_N$ and that for every $n\ge 2$ we have $p_\mathcal X(w^n)>0$.    Then the current $\nu_\mathcal X\in\CN$ is filling.
\item Suppose there exists a free basis $A=\{a_1,\dots,a_N\}$ such that the following hold. For $i=1,\dots, N$ let $w_i$ be a closed reduced and cyclically reduced edge-path in $\Gamma$ representing the conjugacy class of $a_i$ in $F_N$. For $1\le i<j\le N$ let $w_{i,j}$ be a closed reduced and cyclically reduced edge-path in $\Gamma$ representing the conjugacy class of $a_i a_j$ in $F_N$. Suppose that we have $p_\mathcal X(w_i^2)>0$ for $i=1,\dots, N$ and that we have $p_\mathcal X(w_{i,j}^2)>0$ for all $1\le i<j\le N$. Then  the current $\nu_\mathcal X\in\CN$ is filling.
\end{enumerate}
\end{prop}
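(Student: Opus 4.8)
The plan is to reduce each of the four statements to the assertion that certain weights $\langle v,\nu_\mathcal X\rangle_\Gamma$ are strictly positive, and then to quote one of the filling criteria already established. The bridge is Proposition~\ref{p:n}, which says $\langle v,\nu_\mathcal X\rangle_\Gamma=\mu_0[k](v)+\mu_0[k](v^{-1})$ for a reduced edge-path $v$ of length $k$ in $\Gamma$, combined with two standard features of an irreducible finite-state Markov chain: its stationary distribution $\mu_0$ is strictly positive on $S$, and for a feasible word $v=e_1\dots e_k\in(S^k)_+$ one has $\mu_0[k](v)=\mu_0(e_1)\,p_\mathcal X(e_1,e_2)\cdots p_\mathcal X(e_{k-1},e_k)>0$. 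Thus the whole proof rests on the elementary principle: if every length-two subpath of a reduced edge-path $v$ (or of $v^{-1}$) is a transition of $\mathcal X$ of positive probability, then $\langle v,\nu_\mathcal X\rangle_\Gamma>0$; and, in the presence of irreducibility, $\mu_0(e_1)>0$ is automatic.

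For part~(1), the hypothesis $S=E\Gamma$ together with $p_\mathcal X(e,e')>0$ whenever $ee'$ is reduced says that \emph{every} nondegenerate reduced edge-path of $\Gamma$ is feasible, so $\langle v,\nu_\mathcal X\rangle_\Gamma\ge\mu_0[|v|](v)>0$ for all such $v$. By Remark~\ref{r:sup} this forces $\supp(\nu_\mathcal X)=\dd$, and Proposition~\ref{p:fs} then gives that $\nu_\mathcal X$ is filling. For part~(2), $\Gamma=R_A$ has a single vertex, so each $a_i$ is itself a closed reduced and cyclically reduced loop representing $[a_i]$, and $w_{i,j}:=a_ia_j$ (for $i<j$) is such a loop representing $[a_ia_j]$. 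The hypotheses $A\subseteq S$ and $p_\mathcal X(a_i,a_j)>0$ for all $i,j$ make the words $a_i^n$ and $(a_ia_j)^n$ feasible for every $n\ge1$, so $\langle a_i^n,\nu_\mathcal X\rangle_\Gamma>0$ and $\langle (a_ia_j)^n,\nu_\mathcal X\rangle_\Gamma>0$ for all $n\ge1$; Proposition~\ref{p:ai}, applied with this basis and these loops, yields that $\nu_\mathcal X$ is filling.

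For parts~(3) and~(4) the feasibility now has to be squeezed out of a hypothesis stated only for a \emph{square}. If $w=e_1\dots e_m$ is a nondegenerate reduced and cyclically reduced closed edge-path, then $p_\mathcal X(w^2)=p_\mathcal X(w)^2\,p_\mathcal X(e_m,e_1)$, so $p_\mathcal X(w^2)>0$ already forces both $p_\mathcal X(w)>0$ and $p_\mathcal X(e_m,e_1)>0$, hence $p_\mathcal X(w^n)=p_\mathcal X(w)^n\,p_\mathcal X(e_m,e_1)^{n-1}>0$, i.e.\ $w^n$ is feasible, for every $n\ge1$; consequently $\langle w^n,\nu_\mathcal X\rangle_\Gamma\ge\mu_0[mn](w^n)=\mu_0(e_1)\,p_\mathcal X(w^n)>0$ for all $n\ge1$. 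In part~(3) we are handed such a $w$ representing a filling element, so Corollary~\ref{c:z} (with $z=w$) immediately gives that $\nu_\mathcal X$ is filling. In part~(4), applying the same remark to each $w_i$ and each $w_{i,j}$ shows $\langle w_i^n,\nu_\mathcal X\rangle_\Gamma>0$ and $\langle w_{i,j}^n,\nu_\mathcal X\rangle_\Gamma>0$ for all $n\ge1$, and Proposition~\ref{p:ai} again finishes the argument.

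I do not anticipate a genuine difficulty: once the correspondence ``positive transition probabilities along a path $\Rightarrow$ positive $\nu_\mathcal X$-weight of that path'' is set up via Proposition~\ref{p:n}, every case is a direct application of a filling criterion that is already proved in the excerpt. The only points requiring a little care are the bookkeeping of feasibility (checking that the paths in question really are legal runs of $\mathcal X$, where in parts~(2)--(4) one uses that $\Gamma$-basedness makes the hypothesized transitions admissible) and the elementary observation that positivity of $p_\mathcal X(w^2)$ propagates to positivity of $p_\mathcal X(w^n)$ for all $n$, which is exactly what allows the square-only hypotheses of parts~(3) and~(4) to feed into Corollary~\ref{c:z} and Proposition~\ref{p:ai}.
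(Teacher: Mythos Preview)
Your proposal is correct and follows essentially the same route as the paper: in each case you verify positivity of the relevant weights $\langle v,\nu_\mathcal X\rangle_\Gamma$ via $\mu_0[k](v)=\mu_0(e_1)p_\mathcal X(v)$ and then invoke the appropriate filling criterion (full support for~(1), Proposition~\ref{p:ai} for~(2) and~(4), Corollary~\ref{c:z} for~(3)). One tiny remark: in part~(3) the hypothesis already gives $p_\mathcal X(w^n)>0$ for all $n\ge 2$, so your ``square $\Rightarrow$ all powers'' observation is only needed for part~(4); the paper uses it implicitly there as well but does not spell it out, whereas you make it explicit.
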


\begin{proof}

Let $\mu_0$ be the unique stationary probability distribution on $S$ for $\mathcal X$.

(1) The assumption on $\mathcal X$ implies that for every reduced edge-path $v$ in $\Gamma$ of length $k\ge 1$ we have $\mu_0[k](v)>0$, and therefore, by definition of $\nu_\mathcal X$, we also have $\langle v,\nu_\mathcal X\rangle_\Gamma >0$. Thus $\nu_\mathcal X\in\CN$ has full support and therefore $\nu_\mathcal X$ is filling in $F_N$.

(2) The assumptions on $\mathcal X$ (with $\Gamma=R_A$) imply that, for every $n\ge1$,
\[
\mu_0[n](a_i^n)>0 \quad(1\le i\le N),
\qquad
\mu_0[2n]((a_i a_j)^n)>0 \quad(1\le i<j\le N).
\]
Therefore
\[
\langle a_i^n,\nu_\mathcal X\rangle_A>0 \quad(1\le i\le N),
\qquad
\langle(a_i a_j)^n,\nu_\mathcal X\rangle_A>0 \quad(1\le i<j\le N).
\]
Proposition~\ref{p:ai} now implies that $\nu_\mathcal X$ is filling.

(3) Again, similarly to (1) and (2) we see that for every $n\ge 1$ $\langle w^n,\nu_\mathcal X\rangle_\Gamma>0$. Therefore by Corollary~\ref{c:z} the current $\nu_\mathcal X\in\CN$ is filling.

(4) Recall that for a reduced edge-path $v$ in $\Gamma$ of length $k\ge 2$ and starting with $e_1\in E\Gamma$ we have $\mu_0[k](v)=\mu_0(e_1)p_\mathcal X(v)$.  Thus $\mu_0[k](v)>0$ if and only if $e_1\in S$ and the transition probabilities $p_\mathcal X(e',e'')$ are $>0$ for all length-2 subpaths $e'e''$ of $v$. Note also that if for the second edge $e_2$ of $v$ we have $p_\mathcal X(e_1,e_2)>0$ then $e_1,e_2\in S$.

Let $\ell_i=|w_i|$ and $\ell_{i,j}=|w_{i,j}|$. The assumptions in part (4) imply that for every $n\ge1$,
\[
\mu_0[n\ell_i](w_i^n)>0,
\qquad
\mu_0[n\ell_{i,j}](w_{i,j}^n)>0.
\] Therefore, by definition of $\nu_\mathcal X$, we have $\langle w_i^n, \nu_\mathcal X\rangle_\Gamma>0$, and, also, for all $1\le i<j\le N$ and all $n\ge 1$ we have $\langle w_{i,j}^n, \nu_\mathcal X\rangle_\Gamma>0$. Therefore, by Proposition~\ref{p:ai}, the current $\nu_\mathcal X\in\CN$ is filling.

\end{proof}

\begin{ex}
Let $A=\{a_1,\dots, a_N\}$ be a free basis of $F_N=F(A)$ and let $\Gamma=R_A$ be the corresponding $N$-rose.

(1) Consider an $R_A$-based  FSMC $\mathcal X$ with state set $S=A^{\pm 1}$ and transition probabilities $p_\mathcal X(a_i^{\epsilon}, a_j^{\delta})=\frac{1}{2N-1}$ if $a_i^{\epsilon}\ne a_j^{-\delta}$ and $p_\mathcal X(a_i^{\epsilon}, a_i^{-\epsilon})=0$, where $\epsilon,\delta=\pm 1$.  Then $\mathcal X$ is irreducible and non-deterministic. The stationary distribution $\mu_0$ is the uniform probability distribution on $A^{\pm 1}$. Then $\mathcal X$, with an initial distribution $\mu$ on $A^{\pm 1}$,  defines the standard non-backtracking simple random walk $\mathcal W_{\mu}=W_1,W_2\dots$ on $F_N=F(A)$. In this case the characteristic current $\nu_\mathcal X$ is the uniform current $\nu_A$ corresponding to $A$.  The current $\nu_\mathcal X=\nu_A$ has full support and therefore is filling. Since $R_A$ has one vertex, $W_n$ is always closed, but in general $\breve W_n=\operatorname{cyc}(W_n)$. Theorem~\ref{t:cla} shows that cyclic reduction changes only a sublinear number of letters almost surely. Using a closing path system $\mathcal B$ produces cyclically reduced words $\widehat W_n=W_n\beta$, where $\beta\in\mathcal B$ is an appropriate closing path. Since $\eta_{W_n}=\eta_{\operatorname{cyc}(W_n)}$, Theorem~\ref{t:cla} also shows directly that $W_n$ is adapted to $\nu_A$; this fact is explained in more detail in \cite{Ka07} and exploited in the context of Whitehead's algorithm there. In this case $\nu_A$ already has the ``strict minimality'' properties similar to those of strictly minimal elements of $F_N$. Again see \cite{Ka07} for details.

(2) Let $\Gamma$ be a simplicial chart on $F_N$. Consider a $\Gamma$-based FSMC $\mathcal X$ with state set $S=E\Gamma$ and transition probabilities satisfying $p_\mathcal X(e,e')>0$ if and only if $ee'$ is a reduced length-2 edge-path in $\Gamma$. Then $\mathcal X$ is irreducible and non-deterministic. The characteristic current $\nu_\mathcal X$  has full support, and therefore is filling.

(3) Let $\mathcal X$ be an $R_A$-based FSMC with state set $S=A$ and transition probabilities satisfying $p_\mathcal X(a_i,a_j)>0$ for all $1\le i,j\le N$. Then $\mathcal X$ is irreducible and non-deterministic. The characteristic current $\nu_\mathcal X$ has the property that  for $1\ne v\in F(A)$ we have $\langle v,\nu_\mathcal X\rangle_A>0$ if and only if $v$ or $v^{-1}$ is a positive word over $A$. The current $\nu_\mathcal X$ is filling in $F_N$ by Proposition~\ref{p:ai}. We again have $W_n=\breve W_n$ in this case,  and moreover, $W_n$ is already cyclically reduced because it is a positive word.

(4) Assume here that $N\ge3$, and let $\Gamma$ be a ``fan of lollipops''. Thus $\Gamma$ has a central vertex $x_0$, oriented edges $e_1,\dots,e_N$ from $x_0$ to distinct vertices $y_i$, and a loop $f_i$ at each $y_i$. Choose the marking so that the based loop $e_i f_i e_i^{-1}$ represents $a_i$. Let $\mathcal X$ have state set
\[
S=E\Gamma-\{f_1^{-1},\dots,f_N^{-1}\},
\]
and assume that $p_\mathcal X(e,e')>0$ whenever $e,e'\in S$ and $ee'$ is reduced. This chain is irreducible and non-deterministic.

For $1\le i\le N$, put $w_i=f_i$, a closed path based at $y_i$. For $1\le i<j\le N$, put
\[
w_{i,j}=f_i e_i^{-1}e_j f_j e_j^{-1}e_i,
\]
a closed reduced and cyclically reduced path based at $y_i$. The path $w_i$ represents the conjugacy class of $a_i$, while conjugating $w_{i,j}$ to the base vertex $x_0$ and freely reducing gives
\[
(e_i f_i e_i^{-1})(e_j f_j e_j^{-1}),
\]
so $w_{i,j}$ represents the conjugacy class of $a_i a_j$. Every transition occurring in $w_i^2$ and $w_{i,j}^2$ has positive probability by construction. Proposition~\ref{p:XF}(4) therefore shows that $\nu_\mathcal X$ is filling.

In (1), (2), (3) and (4) above, the processes $\widehat{\mathcal W_\mu}$ and $\breve{\mathcal W_\mu}$ (where $\mu$ is any initial distribution on the state set $S$ of $\mathcal X$) are adapted to the characteristic current $\nu_\mathcal X$ of the defining irreducible FSMC, and $\nu_\mathcal X$ is filling in $F_N$. Therefore Theorem~\ref{t:A} and Theorem~\ref{t:A'}  apply to $\widehat{\mathcal W_\mu}$ and $\breve{\mathcal W_\mu}$ in these cases.
\end{ex}

\vspace{1cm}

\end{document}